\documentclass[11pt,a4paper]{article}

\usepackage{amsmath,amssymb,amsthm,amsfonts}
\usepackage{mathrsfs}
\usepackage[margin=2.6cm]{geometry}
\usepackage[colorlinks,linkcolor=blue,citecolor=blue,urlcolor=blue]{hyperref}

\theoremstyle{plain}
\newtheorem{theorem}{Theorem}[section]
\newtheorem{proposition}[theorem]{Proposition}
\newtheorem{lemma}[theorem]{Lemma}
\newtheorem{corollary}[theorem]{Corollary}
\theoremstyle{definition}
\newtheorem{definition}[theorem]{Definition}
\newtheorem{example}[theorem]{Example}
\newtheorem{remark}[theorem]{Remark}

\newcommand{\End}{\mathrm{End}}
\newcommand{\Hom}{\mathrm{Hom}}

\begin{document}

\title{\bf Nijenhuis Lie $n$-algebras}

\author{
Imed Basdouri$^{1*}$,\,Tarek Ben Massoud$^{2}$,\,Ahmed Zahari$^{3}$\\[2mm]
{\small $1.$ Faculty of Sciences of Gafsa, University of Gafsa, Gafsa, Tunisia}\\
{\small $2.$ Faculté des sciences de Sfax, University of Sfax, Sfax, Tunisia,}\\
{\small $3.$University of Haute Alsace, IRIMAS-Faculty of Sciences, Mulhouse, France}\\[2mm]
{\small E-mail: \texttt{basdourimed@yahoo.fr} (I. Basdouri),
\texttt{benmessaoudtarek352@gmail.com} (T. Massoud),}\\
{\small \texttt{zaharymaths@gmail.com} (Ahmed Zahari)}
\\
}
\date{}
\maketitle

\begin{abstract}
We introduce a homotopical framework for Nijenhuis operators on semistrict Lie $n$-algebras ($n$-term $L_\infty$-algebras) for every $n \ge 1$ and, more generally, 
on arbitrary $L_\infty$-algebras.  A Nijenhuis operator is defined as a family $N = (N_q)_{q \ge 1}$ of multilinear maps, with $N_1$ linear, realizing an $L_\infty$-morphism 
from an $N$-twisted copy of the structure to the original one. We prove that this condition is equivalent to the vanishing, at every arity $m$, of the $m$-th Nijenhuis torsion 
up to a correction term $\Lambda^N_m$ formed by higher-component substitutions, and we provide a closed combinatorial formula for $\Lambda^N_m$ as a sum over set partitions. 
Along the way, we show that the arity-$k$ Nijenhuis torsion factors canonically as a composite of $k$ commuting elementary slot deformations. 

The resulting objects form a category $\mathbf{NijLie}_n$; we establish that $\mathbf{NijLie}_m$ embeds as the full subcategory of $\mathbf{NijLie}_n$ on objects concentrated in the bottom
 $m$ degrees for all $m \le n$. These levels assemble into a tower whose first three stages are classical Nijenhuis Lie algebras, the Nijenhuis Lie $2$-algebras of Das, and the newly 
introduced Nijenhuis Lie $3$-algebras. We characterize the strict and skeletal objects at every level, introduce $n$-term representations up to homotopy of a Nijenhuis Lie algebra,
 and prove that the semidirect product construction establishes an equivalence between the category of such representations and split abelian $\mathbf{NijLie}_n$-extensions.
 Finally, two corrections to published level-three identities are documented, and all component identities up to arity five are verified via computer algebra.
\end{abstract}

\noindent{\bf Key words:} Nijenhuis operator, Lie $n$-algebra, higher torsion, representation up to homotopy,
semidirect product

\noindent{\bf 2020 MSC:} 17B40, 17B55, 18N25, 18N40

\section{Introduction}
A linear map $n \colon g \to g$ on a Lie algebra $(g, [-,-])$ is a Nijenhuis operator if its torsion vanishes:
\begin{equation}\label{eq:classical}
[n x, n y] \;=\; n\big([n x, y] + [x, n y] - n[x, y]\big), \qquad x, y \in g.
\end{equation}
This condition originates in the theory of vector-valued differential forms \cite{fn, nijenhuis} and in the Newlander--Nirenberg theorem \cite{newlander-nirenberg}. 
It represents the infinitesimal condition governing linear deformations of a Lie bracket \cite{kosmann-magri, zahari2026}, and it constitutes one of the two halves of Poisson--Nijenhuis and 
bi-Hamiltonian structures \cite{dorfman, carinena, Zahari2025}. The cohomology and deformation theory of Nijenhuis Lie algebras were developed in \cite{das1, swzz,  Zahari2023},
 while their equivariant versions were studied in \cite{hrs}.

Independently, Lie algebras possess two standard higher analogues. Categorification replaces equalities by coherent isomorphisms, whereas homotopification replaces them by higher homotopies. These two procedures meet in the theorem of Baez and Crans \cite{baez-crans}, which states that the category of Lie $2$-algebras is equivalent to the category of $2$-term $L_\infty$-algebras. More generally, a semistrict Lie $n$-algebra is defined as an $n$-term $L_\infty$-algebra \cite{lada-markl, lada-stasheff}, that is, an $L_\infty$-algebra whose underlying graded vector space is concentrated in degrees $0, 1, \dots, n-1$. Such objects naturally arise in higher gauge theory, higher symplectic geometry, and string theory \cite{bhr, baez-rogers, chen-liniado, Makhlouf2020}.

Operators on higher Lie structures have traditionally been studied one level at a time. Rota--Baxter operators on Lie $2$-algebras were introduced in \cite{zl}, and Nijenhuis operators on Lie $2$-algebras were introduced by Das \cite{das3}, who proved that the category $\mathbf{NijLie}_2$ of Nijenhuis Lie $2$-algebras is equivalent to the category of $2$-term Nijenhuis $L_\infty$-algebras and developed their corresponding representation theory. The level-three case, $\mathbf{NijLie}_3$, was subsequently treated in \cite{nij-lie3}. At each level, the definition relies on an explicit list of component identities that grows rapidly in complexity: two identities at level one, three at level two, and seven at level three. However, the general pattern is not immediately transparent from these lists. Indeed, the transition from level two to level three introduces genuinely new phenomena: a $3$-term $L_\infty$-algebra features an operation $l_4$ of arity four, its bracket possesses a Peiffer-type component $g_1 \times g_1 \to g_2$, and the correction terms acquire expressions quadratic in the higher components of the operator.

The rest of this paper is organized as follows. 
In Section~\ref{sec:prelim}, we recall the essential background on $L_\infty$-algebras, focusing on both the unshifted and shifted conventions, as well as $n$-term objects. 
Section~\ref{sec:torsion} introduces the $k$-th Nijenhuis torsion of a multilinear operation and establishes its factorization property via elementary slot deformations. 
In Section~\ref{sec:nij}, we define Nijenhuis operators on arbitrary and semistrict $L_\infty$-algebras, characterize them via the $L_\infty$-morphism property and the torsion form, and detail the corrected identities up to arity five.
Section~\ref{sec:tower} studies the category $\mathbf{NijLie}_n$ of Nijenhuis Lie $n$-algebras, establishes the full subcategory embeddings $\mathbf{NijLie}_m \hookrightarrow \mathbf{NijLie}_n$ ($m \le n$), and characterizes the strict and skeletal objects at every level of the structural tower. 
Section~\ref{sec:special} develops the theory of $n$-term representations up to homotopy for Nijenhuis Lie algebras and their associated homological properties. 
Section~\ref{sec:reps} investigates the semidirect product construction and proves that it establishes an equivalence between the category of such representations and split abelian $\mathbf{NijLie}_n$-extensions. 
Finally, Section~\ref{sec:concl} and the appendices provide concluding remarks, document the explicit unshifted component identities up to arity five, and outline the computer algebra verifications.

\subsection{Main Results}
This paper establishes a unified framework for Nijenhuis operators on arbitrary $L_\infty$-algebras, structured around three core concepts:

For a multilinear operation $\mu$ of arity $k$ and a linear map $n$, the $k$-th Nijenhuis torsion is defined by
\begin{equation}\label{eq:torsion-intro}
\mathrm{Tors}_n(\mu)(a_1, \dots, a_k) \;=\; \sum_{S \subseteq \{1, \dots, k\}} (-1)^{k-|S|} \, n^{k-|S|} \, \mu(a^S_1, \dots, a^S_k),
\,
a^S_i = \begin{cases} n(a_i), & i \in S, \\ a_i, & i \notin S, \end{cases}
\end{equation}
recovering $\mathrm{Tors}_n(l_2) = 0$ for \eqref{eq:classical}. By Lemma~\ref{lem:factorisation}, $\mathrm{Tors}_n(\mu) = \partial_1 \partial_2 \cdots \partial_k \, \mu$, where 
$\partial_i\mu = \mu(\dots, n a_i, \dots)-n \mu(\dots, a_i, \dots)$ are commuting elementary slot deformations. This $k$-fold composition conceptually explains multiplicativity in 
arity and $\mathrm{Tors}_{\lambda n} = \lambda^k \mathrm{Tors}_n$.

\begin{definition}\label{def:nij-operator}
A Nijenhuis operator on $(V, \ell)$ is a family $N = (N_q)_{q \ge 1}$ of graded symmetric maps $N_q \colon S^q V \to V$ (with $N_1 = n$ and $\deg N_q = q-1$) satisfying, for each $m \ge 1$, two equivalent conditions:
\begin{itemize}
\item \emph{Morphism form} (Theorem~\ref{thm:morphism}): $\mathcal{A}_m(N) = \mathcal{B}_m(N)$, stating that the induced coalgebra endomorphism $\mathbf{N}$ is an $L_\infty$-morphism from $(V, \ell^N)$ to $(V, \ell)$, with twisted operations $l_k^N$ given by \eqref{eq:twisted}.
\item \emph{Torsion form} (Theorem~\ref{thm:torsion-form}): $\mathrm{Tors}_n(l_m) + \Lambda_m^N = 0$, where $\Lambda_m^N$ \eqref{eqN} 
aggregates partition sums and higher-component substitutions: $\Lambda_2^N = d N_2$; $\Lambda_3^N$ incorporates $\Lambda$ from \cite{das2} and $d N_3$; $\Lambda_4^N$ yields the corrected $\Xi$ from \cite{nij-lie3}; $\Lambda_5^N$ introduces the cubic terms.
\end{itemize}
\end{definition}

By Lemma~\ref{lem:bounds}, $n$-term $L_\infty$-algebras satisfy $l_k = 0$ ($k > n+1$) and $N_q = 0$ ($q > n$), reducing definitions to $n+1$ identities. Theorem~\ref{thm:truncation} establishes full subcategory embeddings $\mathbf{NijLie}_m \hookrightarrow \mathbf{NijLie}_n$ ($m \le n$), assembling into the direct system:
\[
\mathbf{NijLie}_1 \;\hookrightarrow\; \mathbf{NijLie}_2 \;\hookrightarrow\; \mathbf{NijLie}_3 \;\hookrightarrow\; \cdots \;\hookrightarrow\; \mathbf{NijLie}_\infty \;=\; \varinjlim_n \mathbf{NijLie}_n,
\]
which unifies classical Nijenhuis Lie algebras, Nijenhuis Lie $2$-algebras \cite{das2}, and Nijenhuis Lie $3$-algebras \cite{nij-lie3} within the category of Nijenhuis $L_\infty$-algebras.


\subsection{Two Corrections at Level Three}
Theorem~\ref{thm:torsion-form} corrects two identities in the $NijLie{3}$ framework of \cite{nij-lie3}:

\begin{enumerate}
\item The arity-three identity on $g_0^{\times 3}$ must include the boundary term $d N_3(x,y,z)$, omitted in \cite[(N5)]{nij-lie3}, corresponding to the single-block partition $P = \{\{1,2,3\}\}$:
\begin{equation}\label{eq:corr-1}
Tors_n(l_3)(x,y,z) + \Lambda_3(x,y,z) + d N_3(x,y,z) = 0, \qquad x,y,z \in g_0.
\end{equation}

\item In the expression for $\Xi$, the substitution of $N_2$ into $l_3$ acts via $n$ on both remaining slots rather than the $n$-twist used in \cite[(3.22)]{nij-lie3}:
\begin{equation}\label{eq:corr-2}
l_3\big(n x_k, n x_l, N_2(x_i, x_j)\big) - n \, l_3\big(x_k, x_l, N_2(x_i, x_j)\big).
\end{equation}
Consequently, the twisted operation $l_4^N$ in \cite[(3.23)]{nij-lie3} is modified to:
\begin{equation}\label{eq:corr-3}
l_4^N = l_4^{(n)} + \overline{\mathcal{A}}_4(N) - \overline{\mathcal{B}}_4(N).
\end{equation}
\end{enumerate}

Both corrections are verified symbolically in Appendix~\ref{app:verify}.

Throughout this paper, $\mathbb{K}$ denotes a base field of characteristic zero. All vector spaces, gradings, and multilinear maps are defined over $\mathbb{K}$.


\section{Preliminaries on $L_\infty$-algebras}
\label{sec:prelim}

\subsection{Two Conventions}
We employ two equivalent conventions, outlined below.

The \emph{unshifted convention} (standard in \cite{baez-crans, das2, nij-lie3}) considers a graded vector space $g = \bigoplus_{i \ge 0} g_i$ equipped with skew-symmetric 
multilinear maps $\mu \colon g^{\otimes k} \to g$ of degree $p$, satisfying $\mu(\dots, a, b, \dots) = - \mu(\dots, b, a, \dots)$ without Koszul signs. An $L_\infty$-algebra is a 
family of skew-symmetric operations $l_k$ of degree $k-2$ satisfying relations \eqref{eq:linf-unshifted}.

The \emph{shifted convention} relies on the décalage isomorphism $V= g[-1]$ ($V_{i+1}= g_i$). Under this identification, a skew-symmetric map $\mu$ of degree $p$ transforms into a 
\emph{graded symmetric} map $\hat{\mu} \colon S^kV \to V$ of degree $p+k-1$. Consequently, the entire $L_\infty$-structure $\{l_k\}$ becomes a family of graded symmetric operations
 $\hat{l}_k \colon S^kV \to V$ of \emph{uniform degree $-1$}, and the Nijenhuis components $\{N_q\}$ acquire a \emph{uniform degree $0$}. This uniformity eliminates structural sign 
complications, motivating our adoption of the shifted convention for all general proofs. From Section~\ref{sec:prelim-shift} onward, hats are omitted; unshifted component identities 
are deferred to Appendix~\ref{app:components}.

\begin{definition}\label{def:linf-unshifted}
An \emph{$L_\infty$-algebra} is a graded vector space $g$ together with skew-symmetric multilinear maps $l_k \colon g^{\otimes k} \to g$ of degree $k-2$ ($k \ge 1$) satisfying, 
for every $m \ge 1$ and all homogeneous $x_1, \dots, x_m \in g$,
\begin{equation}\label{eq:linf-unshifted}
\sum_{i+j=m+1} (-1)^{i(j-1)} \sum_{\sigma \in \mathrm{Unsh}(i, m-i)} \varepsilon(\sigma) l_j\big(l_i(x_{\sigma(1)}, \dots, x_{\sigma(i)}), x_{\sigma(i+1)}, \dots, x_{\sigma(m)}\big) = 0,
\end{equation}
where $\mathrm{Unsh}(i, m-i)$ denotes the set of $(i, m-i)$-unshuffles and $\varepsilon(\sigma)$ combines the permutation sign with the Koszul sign relative to the degrees of $x_1, \dots, x_m$. We write $d := l_1$ and $[\,,\,] := l_2$.
\end{definition}

\subsection{The Shifted Convention}
\label{sec:prelim-shift}
Let $V$ be a graded vector space. We denote by $S^c V = \bigoplus_{m \ge 1} S^m V$ the reduced symmetric coalgebra with comultiplication defined by unshuffles. A degree-$p$ map $\phi \colon S^c V \to V$ extends uniquely to a coderivation $D_\phi$ of degree $p$. If $p = 0$, $\phi$ generates a unique coalgebra endomorphism $\mathbf{\Phi}$ of $S^c V$ satisfying $\pi_V \mathbf{\Phi} = \phi$:
\begin{equation}\label{eq:coalg-morphism}
\mathbf{\Phi}(v_1 \cdots v_m) = \sum_{P \in \mathrm{Part}(m)} \varepsilon(P) \phi_{|B_1|}(v_{B_1}) \cdots \phi_{|B_{|P|}|}(v_{B_{|P|}}),
\end{equation}
where $\mathrm{Part}(m)$ is the set of partitions of $\{1, \dots, m\}$ into non-empty blocks $B_1, \dots, B_{|P|}$ ordered by their minima, $v_B := (v_{b_1}, \dots, v_{b_{|B|}})$ for $B = \{b_1 < \cdots < b_{|B|}\}$, $\phi_q := \phi|_{S^q V}$, and $\varepsilon(P)$ is the Koszul sign rearranging $v_1, \dots, v_m$ into $v_{B_1}, \dots, v_{B_{|P|}} $.

An $L_\infty$-algebra on $V$ is a degree-$(-1)$ map $\ell = \sum_{k \ge 1} l_k \colon S^c V \to V$ satisfying $D_\ell^2 = 0$, equivalent to the quadratic relations:
\begin{equation}\label{eq:linf}
\sum_{k=1}^{m} \sum_{\sigma \in \mathrm{Unsh}(k, m-k)} \varepsilon(\sigma) \, l_{m-k+1}\big(l_k(v_{\sigma(1)}, \dots, v_{\sigma(k)}), v_{\sigma(k+1)}, \dots, v_{\sigma(m)}\big) = 0
\end{equation}
for all $m \ge 1$ (with $\varepsilon(\sigma)$ denoting the Koszul sign). An $L_\infty$-morphism $F \colon (V, \ell') \to (V', \ell)$ is a degree-$0$ map $F = \sum_{q \ge 1} F_q \colon S^c V \to V'$ whose generated coalgebra morphism $\mathbf{F}$ satisfies $D_\ell \mathbf{F} = \mathbf{F} D_{\ell'}$, yielding:
\begin{equation}\label{eq:morphism}
\begin{split}
&\sum_{P \in \mathrm{Part}(m)} \varepsilon(P) l_{|P|}\big(F_{|B_1|}(v_{B_1}), \dots, F_{|B_{|P|}|}(v_{B_{|P|}})\big) \\
&\quad= \sum_{k=1}^{m} \sum_{\sigma \in \mathrm{Unsh}(k, m-k)} \varepsilon(\sigma) F_{m-k+1}\big(l'_k(v_{\sigma(1)}, \dots), v_{\sigma(k+1)},\dots\big).
\end{split}
\end{equation}
Composition of $L_\infty$-morphisms corresponds to composition of coalgebra morphisms, forming the category $\mathbf{L\infty\text{-}\mathsf{Alg}}$. A morphism is \emph{strict} if $F_q = 0$ for $q \ge 2$.

\begin{definition}\label{def:nterm}
An $L_\infty$-algebra $g$ is \emph{$n$-term} (or a \emph{semistrict Lie $n$-algebra} \cite{baez-crans}) if $g_i = 0$ for $i \ge n$. We denote by $\mathbf{TermL}_n$ the full subcategory of $L_\infty$-$\mathsf{Alg}$ on $n$-term objects.
\end{definition}

In particular, $1$-term, $2$-term, and $3$-term $L_\infty$-algebras recover standard Lie algebras, the objects of \cite[Def.~2.3]{das2}, and \cite[Def.~2.2]{nij-lie3}, respectively.

\begin{lemma}\label{lem:bounds}
Let $g$ be an $n$-term $L_\infty$-algebra and $\mu \colon g^{\otimes k} \to g$ a multilinear map of degree $p$ (unshifted). If $p > n-1$, then $\mu = 0$. Consequently:
\begin{enumerate}
\item $l_k = 0$ for $k > n+1$ (relations \eqref{eq:linf} are non-trivial only for $1 \le m \le n+2$);
\item $N_q = 0$ for $q > n$ for any Nijenhuis components $\{N_q\}$;
\item $F_q = 0$ for $q > n$ for any $L_\infty$-morphism between $n$-term objects.
\end{enumerate}
\end{lemma}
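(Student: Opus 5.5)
The main claim reduces to a degree count. Since $g$ is concentrated in degrees $0,\dots,n-1$, a homogeneous multilinear map $\mu$ of degree $p$ sends $g_{i_1}\otimes\cdots\otimes g_{i_k}$ into $g_{i_1+\cdots+i_k+p}$. Every $i_j\ge 0$, so the target degree is at least $p$. If $p>n-1$, i.e.\ $p\ge n$, the target space $g_{\ge n}$ vanishes. Hence $\mu$ is zero on every homogeneous tuple, and by multilinearity $\mu=0$. The only subtlety is that "degree $p$" must mean homogeneous of degree $p$. A general map is a sum of homogeneous components, and the statement should be read componentwise.

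The three consequences then follow by identifying the unshifted degree of each family.
\begin{enumerate}
\item In the unshifted convention $l_k$ has degree $k-2$ (Definition~\ref{def:linf-unshifted}), and $k-2>n-1$ exactly when $k>n+1$. Therefore $l_k=0$ for $k\ge n+2$. In relation \eqref{eq:linf} of arity $m$, each term has the form $l_{m-k+1}(l_k(\cdots),\cdots)$, and both $l_k$ and $l_{m-k+1}$ must be nonzero. This forces $k\le n+1$ and $m-k+1\le n+1$, so $m\le 2n+1$. This bound does not match the stated range $m\le n+2$. I would either prove the weaker bound $m\le 2n+1$, or argue more sharply by also tracking input degrees. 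The composite $l_j(l_i(\cdots),\cdots)$ has degree $i+j-4=m-3$, and applying the main claim to it gives vanishing for $m-3\ge n$, i.e.\ $m\ge n+3$. So the relation of arity $m$ is nontrivial only for $m\le n+2$, which matches the statement. I would use this composite-degree argument.
\item The $N_q$ have shifted degree $0$. Converting by the décalage rule (shifted degree $=p+q-1$) gives unshifted degree $p=1-q$, which is non-positive, so the degree count alone does not kill them. This is the main obstacle. The paper's Definition~\ref{def:nij-operator} asserts $\deg N_q=q-1$, which presumably refers to the unshifted degree, in analogy with the homotopy components of Lie $2$-algebra morphisms (e.g.\ $N_2\colon g_0\wedge g_0\to g_1$). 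I would fix the convention that in the unshifted picture $N_q$ has degree $q-1$. Then $q-1>n-1$ iff $q>n$, and the main claim gives $N_q=0$.
\item The same convention applies to an $L_\infty$-morphism: its components $F_q$ have unshifted degree $q-1$, as in Baez--Crans ($F_2\colon g_0\wedge g_0\to g'_1$). The claim is applied with the target $n$-term, giving $F_q=0$ for $q>n$.
\end{enumerate}

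The real work is the sign and degree bookkeeping in the décalage: verifying that a shifted degree-$0$ component $S^qV\to V$ corresponds to the unshifted degree the paper intends. If the paper's $q-1$ is instead the shifted degree, I would state explicitly which convention makes items 2 and 3 true.
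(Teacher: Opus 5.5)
Your argument is the paper's own. The lowest possible target of a degree-$p$ map is $g_p$, and $l_k$ has degree $k-2$. Each summand $l_j(l_i(\cdots),\cdots)$ of the arity-$m$ relation has degree $m-3$, so your first bound $m\le 2n+1$ is rightly replaced by $m\le n+2$. Items (ii)--(iii) then follow from the unshifted degree $q-1$ of $N_q$ and $F_q$.

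The one wrong step is your d\'ecalage computation in item (ii), and with it the ``main obstacle''. The rule you took from the preliminaries, shifted degree $=p+k-1$, is a misprint in the paper. You can see this by applying it to $l_k$: it gives $2k-3$, not the uniform degree $-1$ the paper asserts. The correct count, with $V_{i+1}=g_i$, goes as follows. Inputs from $g_{i_1},\dots,g_{i_k}$ have total $V$-degree $\sum_j i_j+k$, and the output has $V$-degree $\sum_j i_j+p+1$. So the shifted degree is $p-k+1$. This sends $l_k$ to $-1$, and it forces a shifted degree-$0$ component $N_q$ or $F_q$ to have unshifted degree exactly $q-1$. Nothing therefore has to be fixed by convention, and items (ii) and (iii) hold unconditionally under the paper's definitions. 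Your fallback remark about which convention ``makes items 2 and 3 true'' is unnecessary.

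You can also avoid the conversion entirely by counting in the shifted grading. There $V$ (and $V'$) is concentrated in degrees $1,\dots,n$, so $S^qV$ lives in degrees $\ge q$. Hence a degree-$0$ map $S^qV\to V'$ lands in degrees $\ge q>n$ when $q>n$, and it vanishes. The same count shows that the degree $-1$ maps $l_k$ vanish for $k>n+1$, and that the degree $-2$ arity-$m$ relations are vacuous for $m>n+2$.
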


\begin{proof}
A map of degree $p$ maps $g_{i_1} \otimes \cdots \otimes g_{i_k}$ into $g_{\sum i_j + p}$, whose lowest target is $g_p$ (attained on $g_0^{\otimes k}$). Since $g_p = 0$ for $p > n-1$, $\mu = 0$. Since $l_k$ has degree $k-2$, $k-2 > n-1 \implies l_k = 0$, and relation \eqref{eq:linf} of arity $m$ has degree $m-3$, becoming trivial for $m-3 > n-1$. Assertions (ii) and (iii) follow identically from the degrees of $N_q$ and $F_q$.
\end{proof}

\begin{remark}
For $n=1$, Lemma~\ref{lem:bounds} forces $l_1 = 0$, leaving only the Jacobi identity; for $n=2$, it retains $l_1, l_2, l_3$ and arity $\le 4$ relations; for $n=3$, it retains $l_1, \dots, l_4$ and arity $\le 5$ relations, matching the eleven component identities of \cite[Def.~2.2]{nij-lie3}.
\end{remark}


\section{The Nijenhuis Torsion of a Multilinear Operation}
\label{sec:torsion}

Let $g$ be a graded vector space and $n \colon g \to g$ a linear map of degree $0$. Throughout this section, $\mu \colon g^{\otimes k} \to g$ denotes a multilinear map of arbitrary arity $k$, with no symmetry or compatibility requirements with $n$.

\begin{definition}\label{def:torsion}
The \emph{$k$-th Nijenhuis torsion of $\mu$ with respect to $n$} is the arity-$k$ operation defined by
\begin{equation}\label{eq:torsion}
\mathrm{Tors}_n(\mu)(a_1, \dots, a_k)= \sum_{S \subseteq \{1, \dots, k\}} (-1)^{k-|S|} \, n^{k-|S|} \mu(a^S_1, \dots, a^S_k),
\qquad 
a^S_i = \begin{cases} n(a_i), & i \in S, \\ a_i, & i \notin S, \end{cases}
\end{equation}
and the \emph{$n$-twist} of $\mu$ is given by
\begin{equation}\label{eq:twist}
\mu^{(n)}(a_1, \dots, a_k)= \sum_{S \subsetneq \{1, \dots, k\}} (-1)^{k-1-|S|} n^{k-1-|S|} \mu(a^S_1, \dots, a^S_k).
\end{equation}
\end{definition}

Isolating the maximal subset $S = \{1, \dots, k\}$ in \eqref{eq:torsion} yields the fundamental relation:
\begin{equation}\label{eq:torsion-vs-twist}
\mathrm{Tors}_n(\mu) \;=\; \mu(n\,\cdot\,, \dots, n\,\cdot\,) - n \circ \mu^{(n)}.
\end{equation}
Special cases include:
\begin{itemize}
\item \textbf{$k=1$:} $\mathrm{Tors}_n(\mu) = \mu \circ n - n \circ \mu$ and $\mu^{(n)} = \mu$.
\item \textbf{$k=2$:}
\begin{equation}\label{eq:bracket-n}
[a, b]_n= [\,,\,]^{(n)}(a, b) = [n a, b] + [a, n b] - n[a, b],\,\mathrm{Tors}_n([\,,\,])(a, b) = [n a, n b] - n[a, b]_n,
\end{equation}
recovering the classical condition $\mathrm{Tors}_n([\,,\,]) = 0$ from \eqref{eq:classical}.
\item \textbf{$k=3$:}

$
\mu^{(n)}(a, b, c) = \sum_{\text{2 slots}} \mu(n a, n b, c) - \sum_{\text{1 slot}} n \, \mu(n a, b, c) + n^2 \mu(a, b, c),\\ 
 \mathrm{Tors}_n(\mu) = \mu(n a, n b, n c) - n \, \mu^{(n)}(a, b, c).
$
\end{itemize}

The algebraic structure underlying \eqref{eq:torsion} is captured by elementary slot deformations:

\begin{definition}\label{def:slot}
For each $1 \le i \le k$, the \emph{$i$-th elementary slot deformation} $\partial_i$ acting on arity-$k$ maps is defined by
\[
(\partial_i \mu)(a_1, \dots, a_k)=\mu(a_1, \dots, n a_i, \dots, a_k) - n \, \mu(a_1, \dots, a_k).
\]
\end{definition}


\begin{lemma}\label{lem:factorisation}
The elementary slot deformations $\partial_1, \dots, \partial_k$ pairwise commute, and the $k$-th Nijenhuis torsion factors as
\begin{equation}\label{eq:factorisation}
\mathrm{Tors}_n(\mu) \;=\; \partial_1 \partial_2 \cdots \partial_k \, \mu.
\end{equation}
Consequently:
\begin{enumerate}
\item if $\mu$ is (skew-)symmetric, then $\mathrm{Tors}_n(\mu)$ retains the same symmetry and degree as $\mu$;
\item $\mathrm{Tors}_{\lambda n}(\mu) = \lambda^k \, \mathrm{Tors}_n(\mu)$ for every $\lambda \in \mathbb{K}$;
\item if $\mu$ vanishes when evaluated in a slot taking values in an $n$-invariant subspace $W$, then $\mathrm{Tors}_n(\mu)$ vanishes on that same slot;
\item $\mathrm{Tors}_n(\mu) = 0$ whenever $n \circ \mu = \mu \circ (n \otimes \mathrm{id} \otimes \cdots \otimes \mathrm{id})$, and more generally as soon as 
$\partial_i \mu = 0$ for some index $i$.
\end{enumerate}
\end{lemma}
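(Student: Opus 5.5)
The plan is to write each slot deformation as a difference of two commuting operators on the space of arity-$k$ maps $\mathrm{Hom}(g^{\otimes k},g)$. Let $R_i\mu := \mu\circ(\mathrm{id}\otimes\cdots\otimes n\otimes\cdots\otimes\mathrm{id})$, with $n$ in slot $i$, and $L\mu := n\circ\mu$. Then $\partial_i = R_i - L$. The operators $R_i$ act on different tensor factors of the source, so they commute with one another. Each $R_i$ acts on the source while $L$ acts on the target, so $R_i$ commutes with $L$ by associativity of composition. Hence all $R_1,\dots,R_k,L$ lie in a commutative subalgebra of $\End(\mathrm{Hom}(g^{\otimes k},g))$, and in particular the $\partial_i$ pairwise commute.

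For the factorisation, expand the product $\prod_{i=1}^k (R_i - L)$ in this commutative algebra. Choosing $R_i$ for $i\in S$ and $-L$ for $i\notin S$ gives
\[
\partial_1\cdots\partial_k = \sum_{S\subseteq\{1,\dots,k\}} (-1)^{k-|S|} L^{k-|S|}\prod_{i\in S}R_i .
\]
Applied to $\mu$, the summand indexed by $S$ is exactly $(-1)^{k-|S|}n^{k-|S|}\mu(a^S_1,\dots,a^S_k)$, which matches \eqref{eq:torsion}. The only care needed is ordering: each $L$ is a post-composition and each $R_i$ a pre-composition, so $L^{j}\prod_{i\in S} R_i\,\mu = n^{j}\circ\mu\circ(\text{$n$ in the slots of }S)$, as required.

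The consequences follow from this. For (i), $\deg n=0$, so every term in \eqref{eq:torsion} has the degree of $\mu$. For symmetry, a transposition $\tau$ of slots $i,j$ conjugates $R_i$ to $R_j$ and fixes $L$, so it permutes the factors of $\prod_i(R_i-L)$. The product is therefore invariant under $\tau$, and the (skew-)symmetry sign of $\mu$ carries over to $\mathrm{Tors}_n(\mu)$. For the graded case, Koszul signs do not interfere because $n$ has degree $0$. For (ii), $\partial_i$ is homogeneous of degree one in $n$, so replacing $n$ by $\lambda n$ scales each $\partial_i$ by $\lambda$ and the product by $\lambda^k$. For (iii), suppose $a_i\in W$. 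Then in every term of \eqref{eq:torsion} slot $i$ receives either $a_i$ or $n a_i$, and both lie in $W$ by $n$-invariance. Each term therefore vanishes, before $L$ is applied. For (iv), commutativity lets us write $\mathrm{Tors}_n(\mu)=\bigl(\prod_{j\ne i}\partial_j\bigr)\partial_i\mu$, which is $0$ whenever $\partial_i\mu=0$. The displayed hypothesis $n\circ\mu=\mu\circ(n\otimes\mathrm{id}\otimes\cdots\otimes\mathrm{id})$ is precisely $\partial_1\mu=0$.

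No step is a real obstacle. The one point needing care is the commutation of $R_i$ with $L$ (pre- versus post-composition) together with the bookkeeping that each expanded monomial reproduces the correct subset term $S$. After that, everything is formal expansion.
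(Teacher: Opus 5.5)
Your proof is correct and follows essentially the same route as the paper. Both write $\partial_i$ as slot insertion minus post-composition by $n$, use that these operators pairwise commute, expand the product over subsets $S$, and read the four consequences off the factorisation. Checking (iii) on the expanded sum, rather than noting that each $\partial_j$ preserves the vanishing on $W$, is only a cosmetic difference.
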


\begin{proof}
Let $\sigma_i$ denote the operator inserting $n$ in the $i$-th slot, $\mu \mapsto \mu(\dots, n a_i, \dots)$, and let $\tau$ denote post-composition by $n$, 
$\mu \mapsto n \circ \mu$. Insertions in distinct slots commute ($\sigma_i \sigma_j = \sigma_j \sigma_i$), and post-composition commutes with all insertions 
($\tau \sigma_i = \sigma_i \tau$). Since $\partial_i = \sigma_i - \tau$, the operators $\partial_i$ pairwise commute. Expanding the product yields
\[
\partial_1 \cdots \partial_k \;=\; \prod_{i=1}^k (\sigma_i - \tau) \;=\; \sum_{S \subseteq \{1, \dots, k\}} (-1)^{k-|S|} \, \tau^{k-|S|} \prod_{i \in S} \sigma_i,
\]
where $\tau^{k-|S|} \prod_{i \in S} \sigma_i \, \mu = n^{k-|S|} \mu(a^S_1, \dots, a^S_k)$, which establishes \eqref{eq:factorisation}. 

For (i), (skew-)symmetry is preserved because $\partial_1 \cdots \partial_k$ is symmetric in the slot operators by commutativity, and the degree remains unchanged 
since $\deg(n) = 0$. Part (ii) follows directly from $\partial_i^{\lambda n} = \lambda \partial_i^n$. Part (iii) holds because each $\partial_i$ preserves the vanishing
 property on the $n$-stable subspace $W$. Finally, (iv) is an immediate consequence of \eqref{eq:factorisation} and the commutativity of the deformations.
\end{proof}

\begin{remark}
Lemma~\ref{lem:factorisation}(iv) explains why the arity-$1$ condition in Definition~\ref{def:nij} reduces simply to the requirement that $n$ be a chain map:
 $\mathrm{Tors}_n(l_1) = \partial_1 l_1 = l_1 n - n l_1$.
\end{remark}

\begin{lemma}\label{lem:torsion-restriction}
Let $g$ be $n$-term, and let $\mu$ be a multilinear map of arity $k$ and degree $p$. Then $\mathrm{Tors}_n(\mu)$ and $\mu^{(n)}$ are of arity $k$ and degree $p$,
 and thus vanish whenever $p > n-1$. In particular, $\mathrm{Tors}_n(l_k) = 0$ for all $k > n+1$.
\end{lemma}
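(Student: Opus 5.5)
The plan is to read off arity and degree directly from the defining formulas \eqref{eq:torsion} and \eqref{eq:twist}, and then invoke Lemma~\ref{lem:bounds}.

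\textbf{Arity and degree.} Each summand of $\mathrm{Tors}_n(\mu)$ and of $\mu^{(n)}$ has the form $n^{j}\circ\mu\circ(\nu_1\otimes\cdots\otimes\nu_k)$, where every $\nu_i\in\{n,\mathrm{id}\}$ and $j\ge 0$. Such a term is a $k$-linear map. Its degree is
\[
p + j\deg(n) + \sum_i \deg(\nu_i) = p,
\]
because $n$ has degree $0$ by the standing assumption of this section. A finite sum of homogeneous multilinear maps of the same arity $k$ and degree $p$ is again multilinear of arity $k$ and degree $p$. Equivalently, for the torsion one can apply Lemma~\ref{lem:factorisation}: each $\partial_i$ preserves arity and degree, so their composite $\partial_1\cdots\partial_k\mu$ does too.

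\textbf{Vanishing.} Lemma~\ref{lem:bounds} applies to any multilinear map of degree $p$ on an $n$-term $L_\infty$-algebra. Its argument uses only the grading of the underlying space, namely that the lowest possible target is $g_p$, which is zero when $p>n-1$. Applying it to $\mathrm{Tors}_n(\mu)$ and to $\mu^{(n)}$ shows that both vanish whenever $p>n-1$.

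\textbf{The case of $l_k$.} In the unshifted convention, $l_k$ has degree $k-2$. If $k>n+1$, then $k-2>n-1$, so the previous step gives $\mathrm{Tors}_n(l_k)=0$. Alternatively, Lemma~\ref{lem:bounds}(i) already gives $l_k=0$, and the torsion of the zero map is zero.

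\textbf{Main subtlety.} The only point needing care is notational: the letter $n$ denotes both the Nijenhuis map and the number of terms. I will state explicitly that the degree bound refers to the term count, and that the argument uses only $\deg(n)=0$ for the operator.
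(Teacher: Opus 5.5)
Your proof is correct and follows the paper's route: the paper simply cites Lemma~\ref{lem:factorisation}(i) for preservation of arity and degree, and Lemma~\ref{lem:bounds} for the vanishing when $p>n-1$. Your term-by-term degree count from \eqref{eq:torsion} and \eqref{eq:twist} is slightly more complete, since it covers $\mu^{(n)}$ directly, which the factorisation lemma on its own does not address.
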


\begin{proof}
Immediate from Lemma~\ref{lem:factorisation}(i) and Lemma~\ref{lem:bounds}.
\end{proof}

\section{Nijenhuis Operators on $L_{\infty}$-Algebras}
\label{sec:nij}

From this point onward, we adopt the shifted convention of Section~\ref{sec:prelim-shift}: $(V, \ell)$ denotes an $L_\infty$-algebra equipped with graded symmetric operations $l_k \colon S^k V \to V$ of degree $-1$.


\subsection{The Data}
\begin{definition}\label{def:data}
A \emph{Nijenhuis datum} on $V$ is a family $N = (N_q)_{q \ge 1}$ of graded symmetric maps $N_q \colon S^q V \to V$ of degree $0$. We write $n = N_1$ for its
 \emph{linear part}, and denote by $\mathbf{N}$ the coalgebra endomorphism of $S^c V$ generated by $N$ via \eqref{eq:coalg-morphism}.
\end{definition}

In the unshifted convention, a Nijenhuis datum corresponds to a family of skew-symmetric maps $N_q \colon g^{\otimes q} \to g$ of degree $q-1$. Consequently, 
$n$ preserves degrees, while $N_2$ raises degree by one, $N_3$ by two, and so forth. By Lemma~\ref{lem:bounds}(ii), on an $n$-term $L_\infty$-algebra, a Nijenhuis 
datum possesses at most $n$ components, $N_1, \dots, N_n$:
\begin{itemize}
\item \textbf{$n=1$:} only $n$ survives;
\item \textbf{$n=2$:} $n$ and $N_2$, with $N_2 \colon g_0 \times g_0 \to g_1$;
\item \textbf{$n=3$:} $n$, $N_2$ (on $g_0 \times g_0$ and $g_0 \times g_1$), and $N_3 \colon g_0^{\times 3} \to g_2$.
\end{itemize}
This setup precisely recovers the structural data of \cite[Def.~4.1]{das2} and \cite[Def.~3.6]{nij-lie3}.


\subsection{The Twisted Operations}
Recall from Section~\ref{sec:prelim-shift} that $\mathrm{Part}(m)$ partitions $\{1, \dots, m\}$. We partition these sets into three disjoint classes:
\begin{itemize}
\item the \emph{discrete partition}, consisting entirely of singleton blocks (unique, with $|P| = m$);
\item $\mathrm{Part}^\ast(m)$, partitions possessing at least one singleton block \emph{and} at least one block of cardinality $\ge 2$;
\item $\mathrm{Part}^\circ(m)$, partitions all of whose blocks have cardinality $\ge 2$.
\end{itemize}
These three classes are mutually disjoint and exhaust $\mathrm{Part}(m)$. For a Nijenhuis datum $N$, a partition $P$, and a block $B \in P$, we adopt the shorthand notations:
\[
N_B= N_{|B|}(v_B), \qquad \bar{N}_B = \begin{cases} v_i & \text{if } B = \{i\}, \\ N_{|B|}(v_B) & \text{if } |B| \ge 2, \end{cases}
\]
where $\bar{N}_B$ removes the linear part on singleton blocks.

\begin{definition}\label{def:twisted}
Let $N$ be a Nijenhuis datum on the $L_\infty$-algebra $(V, \ell)$. The \emph{$N$-twisted operations} are the graded symmetric maps $l^N_m \colon S^m V \to V$ of degree $-1$ defined for each $m \ge 1$ by
\begin{equation}\label{eq:twisted}
l^N_m \;=\; l^{(n)}_m \;+\; \overline{\mathcal{A}}_m(N) \;-\; \overline{\mathcal{B}}_m(N),
\end{equation}
where $l^{(n)}_m$ is the $n$-twist \eqref{eq:twist} of $l_m$, and
\begin{align}
\overline{\mathcal{A}}_m(N)(v_1, \dots, v_m) &= \sum_{P \in \mathrm{Part}^\ast(m)} \varepsilon(P) \, l_{|P|}\big(\bar{N}_{B_1}, \dots, \bar{N}_{B_{|P|}}\big), \label{eq:Abar} \\[4pt]
\overline{\mathcal{B}}_m(N)(v_1, \dots, v_m) &= \sum_{k=2}^{m-1} \sum_{\sigma \in \mathrm{Unsh}(k, m-k)} \varepsilon(\sigma) \, N_{m-k+1}\big(l_k(v_{\sigma(1)}, \dots, v_{\sigma(k)}), v_{\sigma(k+1)}, \dots, v_{\sigma(m)}\big). \label{eq:Bbar}
\end{align}
\end{definition}

Formula \eqref{eq:twisted} provides a direct, closed expression rather than a recursion, as only the untwisted operations $l_k$ appear on the right-hand side. The first few explicit cases are:
\begin{equation}\label{eq:twisted-low}
l^N_1 = l_1 = d, \qquad 
l^N_2 = [\,,\,]_n, \qquad 
l^N_3 = l^{(n)}_3 + \sum_{(3)} \Big( l_2\big(v_i, N_2(v_j, v_k)\big) - N_2\big(l_2(v_i, v_j), v_k\big) \Big),
\end{equation}
where $\sum_{(3)}$ denotes the sum over the three partitions in $\mathrm{Part}^\ast(3)$ including appropriate Koszul signs. Thus, $l^N_2$ recovers the classical Nijenhuis bracket deformation, and $l^N_3$ matches the deformations utilized at levels two and three \cite[(7)]{das2}, \cite[(3.21)]{nij-lie3}. For arity four, the twisted operation is $l^N_4 = l^{(n)}_4 + \overline{\mathcal{A}}_4 - \overline{\mathcal{B}}_4$, where $\overline{\mathcal{A}}_4$ runs over the ten partitions of types $(2,1,1)$ and $(3,1)$, while $\overline{\mathcal{B}}_4$ incorporates the substitutions $N_3(l_2(\cdot, \cdot), \cdot)$ and $N_2(l_3(\cdot, \cdot, \cdot), \cdot)$.


\subsection{The Definition and Morphism Property}
\begin{definition}\label{def:nij}
Let $(V, \ell)$ be an $L_\infty$-algebra. A Nijenhuis datum $N = (N_q)_{q \ge 1}$ is a \emph{Nijenhuis operator} on $(V, \ell)$ if, for every $m \ge 1$ and all homogeneous elements $v_1, \dots, v_m \in V$, the following identity holds:
\begin{equation}\label{eq:nij}
\mathcal{A}_m(N) \;=\; \mathcal{B}_m(N),
\end{equation}
where
\begin{align}
\mathcal{A}_m(N)(v_1, \dots, v_m) &= \sum_{P \in \mathrm{Part}(m)} \varepsilon(P) \, l_{|P|}\big(N_{B_1}, \dots, N_{B_{|P|}}\big), \label{eq:A} \\
\mathcal{B}_m(N)(v_1, \dots, v_m) &= \sum_{k=1}^{m} \sum_{\sigma \in \mathrm{Unsh}(k, m-k)} \varepsilon(\sigma) \, N_{m-k+1}\big(l^N_k(v_{\sigma(1)}, \dots, v_{\sigma(k)}), v_{\sigma(k+1)}, \dots, v_{\sigma(m)}\big). \label{eq:B}
\end{align}
A \emph{Nijenhuis $L_\infty$-algebra} is a pair $\big((V, \ell), N\big)$ consisting of an $L_\infty$-algebra together with a Nijenhuis operator on it. When $(V, \ell)$ is $n$-term, we refer to $\big((V, \ell), N\big)$ as a \emph{Nijenhuis Lie $n$-algebra} (or an \emph{$n$-term Nijenhuis $L_\infty$-algebra}).
\end{definition}

\begin{theorem}\label{thm:morphism}
Let $N$ be a Nijenhuis datum on $(V, \ell)$, and let $\ell^N = \sum_m l^N_m$ denote the twisted collection of Definition~\ref{def:twisted}. Then $N$ is a Nijenhuis operator if and only if
\begin{equation}\label{eq:coalg}
D_\ell \circ \mathbf{N} \;=\; \mathbf{N} \circ D_{\ell^N},
\end{equation}
that is, if and only if $N$ satisfies the $L_\infty$-morphism identity \eqref{eq:morphism} mapping the twisted structure to the original one.
\end{theorem}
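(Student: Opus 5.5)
The plan is to compare both sides of \eqref{eq:coalg} after projecting to $V$. Since $\mathbf{N}$ is a coalgebra morphism and $D_\ell$, $D_{\ell^N}$ are coderivations, the difference $D_\ell\mathbf{N}-\mathbf{N}D_{\ell^N}$ is an $\mathbf{N}$-coderivation: it is a $(\mathbf{N},\mathbf{N})$-twisted coderivation of $S^cV$. I will invoke the standard cofreeness fact that such a map is determined by its corestriction $\pi_V\circ(\ldots)\colon S^cV\to V$. Hence \eqref{eq:coalg} holds if and only if
\[
\pi_V D_\ell \mathbf{N} = \pi_V \mathbf{N} D_{\ell^N}
\]
on each $S^mV$, $m\ge 1$. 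This is exactly the componentwise morphism identity \eqref{eq:morphism} with $F=N$ and $\ell'=\ell^N$.

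Next I compute the two projections on $v_1\cdots v_m$. For the left side, $\pi_V D_\ell=\ell$, and by \eqref{eq:coalg-morphism} $\mathbf{N}(v_1\cdots v_m)$ is the sum over partitions $P$ of $\varepsilon(P)\,N_{B_1}\cdots N_{B_{|P|}}$. Applying $l_{|P|}$ to this gives precisely $\mathcal{A}_m(N)$ of \eqref{eq:A}.

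For the right side, $D_{\ell^N}(v_1\cdots v_m)$ is the sum over unshuffles of $\varepsilon(\sigma)\,l^N_k(v_{\sigma(1)},\dots,v_{\sigma(k)})\,v_{\sigma(k+1)}\cdots v_{\sigma(m)}$. The projection $\pi_V\mathbf{N}=N$ then applies $N_{m-k+1}$ to this word of length $m-k+1$, giving $\mathcal{B}_m(N)$ of \eqref{eq:B}. The equivalence of $\mathcal{A}_m=\mathcal{B}_m$ for all $m$ with \eqref{eq:coalg} follows immediately, which is the content of Definition~\ref{def:nij}.

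The main point needing care is the sign bookkeeping. I must check that the Koszul signs $\varepsilon(P)$ and $\varepsilon(\sigma)$ in \eqref{eq:A}, \eqref{eq:B} agree with those produced by the coalgebra formulas. In the shifted convention all $N_q$ have degree $0$, so no extra signs arise when passing $N$ through elements. The coderivation $D_{\ell^N}$ has degree $-1$, but it acts first on the leftmost factor after unshuffling, so it produces only the unshuffle Koszul sign.

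I also need $\ell^N$ to be a legitimate degree $-1$ graded symmetric family for the coderivation $D_{\ell^N}$ to be defined. This holds by Definition~\ref{def:twisted} and Lemma~\ref{lem:factorisation}(i). Note that I do not need $D_{\ell^N}^2=0$ for the equivalence itself; that would be a consequence proved separately, for instance when $n$ is invertible.

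I expect the injectivity of corestriction for $\mathbf{N}$-coderivations to be the step requiring the most justification. I would prove it directly by induction on word length. Write $\Delta$ for the reduced comultiplication. Then
\[
\Delta\Phi=(\Phi\otimes\mathbf{N}+\mathbf{N}\otimes\Phi)\Delta .
\]
So if $\pi_V\Phi=0$, the components of $\Phi$ of higher output length are forced to vanish one at a time.
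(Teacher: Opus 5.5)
Your proposal is correct and follows the paper's proof: the paper likewise argues that both sides of \eqref{eq:coalg} are $\mathbf{N}$-coderivations determined by their corestrictions to $V$, and that these corestrictions are $\mathcal{A}_m(N)$ and $\mathcal{B}_m(N)$. Your inductive argument for uniqueness under corestriction is a step the paper only asserts; it uses characteristic zero, for instance through the injectivity of $S^jV\to V\otimes S^{j-1}V$.
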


\begin{proof}
Both sides of \eqref{eq:coalg} define maps $S^c V \to S^c V$ that are $\mathbf{N}$-coderivations, satisfying $\Delta \circ X = (\mathbf{N} \otimes X + X \otimes \mathbf{N}) \circ \Delta$. Such a map is uniquely determined by its corestriction $\pi_V \circ X$. 

Restricted to $S^m V$, the corestriction $\pi_V D_\ell \mathbf{N}$ yields precisely $\mathcal{A}_m(N)$ via \eqref{eq:coalg-morphism} and the fact that $D_\ell$ corestricts to $\ell$. Similarly, $\pi_V \mathbf{N} D_{\ell^N}$ yields $\mathcal{B}_m(N)$, since $D_{\ell^N}$ is the coderivation generated by $\ell^N$. Consequently, \eqref{eq:coalg} holds if and only if $\mathcal{A}_m = \mathcal{B}_m$ for all $m$, which is condition \eqref{eq:nij}. The identification of \eqref{eq:nij} with \eqref{eq:morphism} follows by reading this computation in reverse.
\end{proof}

\begin{remark}\label{rem:principle}
Theorem~\ref{thm:morphism} captures the conceptual essence of Definition~\ref{def:nij}: \emph{a Nijenhuis operator is a datum that deforms the $L_\infty$-structure and simultaneously constitutes an $L_\infty$-morphism from the deformed structure back to the original one}. At level one, this reduces to the classical statement that $n$ is a Lie algebra morphism from $(g, [\,,\,]_n)$ to $(g, [\,,\,])$. The higher components $N_2, N_3, \dots$ provide the higher homotopies enforcing this property coherently. Note that relation \eqref{eq:coalg} does \emph{not} presuppose a priori that $\ell^N$ forms an $L_\infty$-structure (see Theorem~\ref{thm:deformation}).
\end{remark}
\begin{theorem}\label{thm:torsion-form}
A Nijenhuis datum $N$ is a Nijenhuis operator on $(V, \ell)$ if and only if, for every $m \ge 1$,
\begin{equation}\label{eq:torsion-form}
\mathrm{Tors}_n(l_m) \;+\; \Lambda^N_m \;=\; 0,
\end{equation}
where the correction term $\Lambda^N_m$ is given by the closed formula:
\begin{align}\label{eqN}
\Lambda^N_m= &\sum_{P \in \mathrm{Part}^\ast(m)} \varepsilon(P) \Big( l_{|P|}\big(N_{B_1}, \dots, N_{B_{|P|}}\big) - n \, l_{|P|}\big(\bar{N}_{B_1}, \dots, \bar{N}_{B_{|P|}}\big) \Big) \notag\\
&+ \sum_{P \in \mathrm{Part}^\circ(m)} \varepsilon(P) \, l_{|P|}\big(N_{B_1}, \dots, N_{B_{|P|}}\big) \notag\\
&- \sum_{k=2}^{m-1} \sum_{\sigma \in \mathrm{Unsh}(k, m-k)} \varepsilon(\sigma) \Big( N_{m-k+1}\big(l^N_k(v_{\sigma(1)}, \dots), v_{\sigma(k+1)}, \dots\big)\\
&- n N_{m-k+1}\big(l_k(v_{\sigma(1)}, \dots), v_{\sigma(k+1)}, \dots\big) \Big) \notag
- \sum_{\sigma \in \mathrm{Unsh}(1, m-1)} \varepsilon(\sigma) \, N_m\big(d \, v_{\sigma(1)}, v_{\sigma(2)}, \dots, v_{\sigma(m)}\big). \label{eq:Lambda-general}
\end{align}
\end{theorem}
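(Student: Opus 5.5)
The plan is to start from Definition~\ref{def:nij}, which by Theorem~\ref{thm:morphism} is the characterization of Nijenhuis operators we may use. I will show by direct algebraic rearrangement that, for each $m$,
\[
\mathcal{A}_m(N)-\mathcal{B}_m(N)=\mathrm{Tors}_n(l_m)+\Lambda^N_m
\]
as an identity of operations on $S^mV$. The equivalence of the two conditions then follows arity by arity. No induction on $m$ is needed, because $l^N_k$ is given in closed form by \eqref{eq:twisted} and only untwisted $l_k$ appear there.

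\textbf{Step 1: split $\mathcal{A}_m(N)$.} Use the trichotomy $\mathrm{Part}(m)=\{\text{discrete}\}\sqcup\mathrm{Part}^\ast(m)\sqcup\mathrm{Part}^\circ(m)$.
\begin{itemize}
\item The discrete partition has trivial sign $\varepsilon$, so it contributes $l_m(nv_1,\dots,nv_m)$.
\item The other two classes contribute $\sum_{\mathrm{Part}^\ast}\varepsilon(P)\,l_{|P|}(N_{B_1},\dots)$ and $\sum_{\mathrm{Part}^\circ}\varepsilon(P)\,l_{|P|}(N_{B_1},\dots)$ respectively.
\end{itemize}

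\textbf{Step 2: split $\mathcal{B}_m(N)$ according to $k$.}
\begin{itemize}
\item For $k=m$, the only unshuffle is the identity, giving
\[
N_1\bigl(l^N_m(v_1,\dots,v_m)\bigr)=n\,l^{(n)}_m+n\,\overline{\mathcal{A}}_m(N)-n\,\overline{\mathcal{B}}_m(N).
\]
\item For $k=1$, since $l^N_1=d$ by \eqref{eq:twisted-low}, we get $\sum_{\mathrm{Unsh}(1,m-1)}\varepsilon(\sigma)\,N_m(dv_{\sigma(1)},\dots)$.
\item The range $2\le k\le m-1$ gives the terms $N_{m-k+1}(l^N_k(\dots),\dots)$.
\end{itemize}

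\textbf{Step 3: subtract and regroup.}
\begin{itemize}
\item By \eqref{eq:torsion-vs-twist}, $l_m(n\cdot,\dots,n\cdot)-n\,l^{(n)}_m=\mathrm{Tors}_n(l_m)$.
\item The $\mathrm{Part}^\ast$ part of $\mathcal{A}_m$ combines with $-n\,\overline{\mathcal{A}}_m$ from \eqref{eq:Abar} to give the first line of \eqref{eqN}.
\item The $\mathrm{Part}^\circ$ part passes through unchanged.
\item The term $+n\,\overline{\mathcal{B}}_m$ is, by \eqref{eq:Bbar}, exactly $\sum_{k=2}^{m-1}\sum_\sigma\varepsilon(\sigma)\,n\,N_{m-k+1}(l_k(\dots),\dots)$. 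Paired with the middle-$k$ terms of $\mathcal{B}_m$, it gives the third line of \eqref{eqN}.
\item The $k=1$ terms give the final sum.
\end{itemize}

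The Koszul signs match term by term. All of $n$ and the $N_q$ have degree $0$ in the shifted convention, so applying $n$ outside, or substituting $N_B$ for $\bar N_B$, does not alter $\varepsilon(P)$ or $\varepsilon(\sigma)$. The sign attached to each partition or unshuffle is therefore the same on both sides.

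\textbf{Main obstacle: low arities.} Here the pieces of $\mathcal{B}_m$ overlap, and I would check them separately.
\begin{itemize}
\item For $m=1$, the cases $k=1$ and $k=m$ coincide, so $N_1(dv)$ must be counted once. The identity reads $dn-nd=\mathrm{Tors}_n(l_1)$, so $\Lambda^N_1=0$. This requires reading the final sum in \eqref{eqN} as present only for $m\ge2$, and I would state that convention explicitly.
\item For $m=2$, $\mathrm{Part}^\ast(2)$ and the middle range of $k$ are empty. One gets $\Lambda^N_2=dN_2(v_1,v_2)-N_2(dv_1,v_2)-\varepsilon\,N_2(dv_2,v_1)$, which is the $dN_2$ of the statement read as the differential of $N_2$.
\end{itemize}
Once these edge cases are fixed, the general case is purely the bookkeeping above.
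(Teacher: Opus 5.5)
Your proposal is correct and is essentially the paper's own proof. Like the paper, you split $\mathcal{A}_m(N)$ by the trichotomy of partitions, split $\mathcal{B}_m(N)$ into the cases $k=m$, $k=1$ and $2\le k\le m-1$, substitute the closed formula for $l^N_m$, and invoke \eqref{eq:torsion-vs-twist}.

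Your separate check of $m=1$ is a real refinement that the paper's proof glosses over. Read literally, the final sum in $\Lambda^N_1$ runs over $\mathrm{Unsh}(1,0)$ and contributes $-n\,d$, which double-counts the coinciding $k=1=m$ term. So your convention that this sum appears only for $m\ge 2$ is needed: it gives $\Lambda^N_1=0$, which is what the remark after Lemma~\ref{lem:factorisation} presupposes.
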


\begin{proof}
In the expression for $\mathcal{A}_m(N)$, the discrete partition contributes $l_m(n v_1, \dots, n v_m)$, while the remaining partitions belong to $\mathrm{Part}^\ast(m) \sqcup \mathrm{Part}^\circ(m)$. In $\mathcal{B}_m(N)$, the term $k=m$ contributes $n \circ l^N_m$, and the term $k=1$ yields the final line of \eqref{eqN} with the opposite sign, since $l^N_1 = d$ by \eqref{eq:twisted-low}. Consequently, condition \eqref{eq:nij} is equivalent to:
\[
l_m(n\,\cdot, \dots, n\,\cdot\,) - n \circ l^N_m 
+ \!\!\sum_{P \in \mathrm{Part}^\ast \sqcup \mathrm{Part}^\circ}\!\!\varepsilon(P) \, l_{|P|}(N_{B_\bullet}) 
- \sum_{k=2}^{m-1} \sum_\sigma \varepsilon(\sigma)N_{m-k+1} 
- \sum_\sigma \varepsilon(\sigma)N_m(d\cdot, \dots)=0.
\]
Substituting $l^N_m = l^{(n)}_m + \overline{\mathcal{A}}_m - \overline{\mathcal{B}}_m$ from Definition~\ref{def:twisted} and invoking relation \eqref{eq:torsion-vs-twist} in the form $l_m(n\cdot, \dots, n\cdot) - n \circ l^{(n)}_m = \mathrm{Tors}_n(l_m)$, the identity transforms directly into \eqref{eq:torsion-form}. The terms $-n \overline{\mathcal{A}}_m$ and $+n \overline{\mathcal{B}}_m$ pair off precisely with the corresponding untwisted terms, yielding the complete expression in \eqref{eqN}.
\end{proof}


\begin{remark}[Analysis of the Correction Terms]\label{rem:reading}
Each line of formula \eqref{eqN} carries a transparent structural meaning:
\begin{enumerate}
\item \textbf{First line:} Collects substitutions of higher components $N_q$ ($q \ge 2$) into an operation $l_{|P|}$ possessing at least one free slot. Each free slot is evaluated once on $n v_i$ and once on $v_i$ with an outer $n$. Theorem~\ref{thm:morphism} demonstrates that this specific combination is uniquely forced by the morphism principle.
\item \textbf{Second line:} Collects substitutions with no free slots, carrying no outer $n$. The single-block partition yields the boundary term $d N_m$; partitions into two blocks of size $\ge 2$ yield terms \emph{quadratic} in higher components, exemplified by $l_2\big(N_2(v_i, v_j), N_2(v_k, v_l)\big)$ at $m=4$; partitions into three blocks give cubic terms, such as $l_3(n v_i, N_2, N_2)$-type terms at $m=5$. In general, $\Lambda^N_m$ contains polynomial terms of degree up to $\lfloor m/2 \rfloor$ in the higher components.
\item \textbf{Third line:} Collects substitutions of an operation into a higher component, with the inner operation being twisted in the first term and untwisted under the outer $n$ in the second.
\item \textbf{Fourth line:} Represents the boundary correction $N_m(d v_{\sigma(1)}, \dots)$, acting as the companion to the $d N_m$ term from the second line.
\end{enumerate}
\end{remark}

\begin{corollary}\label{cor:finite}
Let $(V, \ell)$ be an $n$-term $L_\infty$-algebra. Then identity \eqref{eq:torsion-form} is satisfied trivially for $m > n+2$, and $\Lambda^N_m$ involves only the components $N_1, \dots, N_{\min(m, n)}$ alongside the operations $l_1, \dots, l_{\min(m, n+1)}$. Consequently, a Nijenhuis operator on an $n$-term $L_\infty$-algebra constitutes a finite collection of data subject to a finite set of component identities, indexed by $1 \le m \le n+2$.
\end{corollary}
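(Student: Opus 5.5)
The plan is to combine Theorem~\ref{thm:torsion-form} with the degree bounds of Lemma~\ref{lem:bounds} and Lemma~\ref{lem:torsion-restriction}. The argument works in the unshifted grading, where the relevant degree counts are transparent. By Theorem~\ref{thm:torsion-form}, the arity-$m$ Nijenhuis condition is equivalent to $\mathrm{Tors}_n(l_m)+\Lambda^N_m=0$. This left-hand side is a skew-symmetric map $g^{\otimes m}\to g$.

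The first step is to determine its unshifted degree. The difference $\mathcal{A}_m(N)-\mathcal{B}_m(N)$ is the corestriction of $D_\ell\mathbf{N}-\mathbf{N}D_{\ell^N}$ to $S^mV$, so in the shifted convention it has degree $-1$. Transported back through the décalage, an arity-$m$ map of shifted degree $-1$ has unshifted degree $m-2$. One can also check this termwise: a term such as $l_{|P|}(N_{B_1},\dots,N_{B_{|P|}})$ has unshifted degree
\[
(|P|-2)+\sum_{B\in P}(|B|-1)=m-2,
\]
and the $N_{m-k+1}(l_k(\cdots),\cdots)$ terms have degree $(k-2)+(m-k)=m-2$. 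Every summand of $\Lambda^N_m$ is therefore a multilinear map of degree $m-2$, and so is $\mathrm{Tors}_n(l_m)$ by Lemma~\ref{lem:torsion-restriction}.

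The key step is to apply Lemma~\ref{lem:bounds}: a map of degree $p>n-1$ on an $n$-term algebra vanishes. For $m-2>n-1$, i.e.\ $m>n+1$, the whole identity then vanishes identically. This gives triviality for $m\ge n+2$, which in particular covers the stated range $m>n+2$. I would record the stronger bound but keep the statement as given, so that the indexing $1\le m\le n+2$ matches the relation count in Lemma~\ref{lem:bounds}(i).

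The second claim is read off from \eqref{eqN}. Only $N_q$ with $q\le m$ can occur, because every block satisfies $|B|\le m$ and every substituted component $N_{m-k+1}$ has $m-k+1\le m$. Lemma~\ref{lem:bounds}(ii) kills $N_q$ for $q>n$. Likewise only $l_j$ with $j\le m$ occur, since $|P|\le m$, the inner $l_k$ has $k\le m-1$, and $l^N_k$ is built from $l_j$ with $j\le k$ by Definition~\ref{def:twisted}. Lemma~\ref{lem:bounds}(i) kills $l_j$ for $j>n+1$. The finiteness conclusion follows: the data are $N_1,\dots,N_n$, and the identities are indexed by $1\le m\le n+2$.

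The main obstacle is the degree bookkeeping for the composite terms, in particular nested terms such as $N(l^N_k,\dots)$ where $l^N_k$ itself contains $N$'s. I would handle this uniformly by the shifted-degree argument, where all $l$'s have degree $-1$ and all $N$'s have degree $0$, so that every summand of $\mathcal{A}_m-\mathcal{B}_m$ has shifted degree $-1$. This avoids checking each summand of $\Lambda^N_m$ separately.
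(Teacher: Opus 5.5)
Your proposal is correct and follows the paper's own proof. In both, the arity-$m$ identity is a map of shifted degree $-1$, i.e.\ unshifted degree $m-2$, so Lemma~\ref{lem:bounds} makes it vacuous once $m-2>n-1$; the paper's count gives the same sharper range $m\ge n+2$ that you note. The bounds on which $N_q$ and $l_j$ can appear in $\Lambda^N_m$ are then read off from \eqref{eqN} together with Lemmas~\ref{lem:bounds} and~\ref{lem:torsion-restriction}, and your termwise degree check and tracking of arities inside $l^N_k$ just make explicit what the paper leaves implicit.
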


\begin{proof}
Identity \eqref{eq:torsion-form} of arity $m$ corresponds to an equality of maps $S^m V \to V$ of degree $-1$ (i.e., maps $g^{\otimes m} \to g$ of degree $m-2$ in the unshifted convention), which becomes vacuously true once $m-2 > n-1$ by Lemma~\ref{lem:bounds}. The remaining assertions follow directly from Lemma~\ref{lem:bounds} and Lemma~\ref{lem:torsion-restriction}.
\end{proof}

\begin{remark}
For $n=1$, Corollary~\ref{cor:finite} restricts the range to $m \le 3$, where the arity-$1$ and arity-$3$ identities are vacuous, reducing a Nijenhuis Lie $1$-algebra to a classical Nijenhuis Lie algebra ($\mathrm{Tors}_n([\,,\,]) = 0$). For $n=2$, it restricts to $m \le 4$, with the arity-$4$ identity being vacuous, recovering the three component identities of \cite[Def.~4.1]{das2}. For $n=3$, it restricts to $m \le 5$ (arity-$5$ being vacuous), yielding exactly seven non-trivial component identities (see Appendix~\ref{app:components}).
\end{remark}

\subsection{The Twisted Structure}
\begin{theorem}\label{thm:deformation}
Let $N$ be a Nijenhuis operator on $(V, \ell)$.
\begin{enumerate}
\item $\mathbf{N} \circ D_{\ell^N}^2 = 0$; equivalently, all $L_\infty$-relations for $\ell^N$ hold after applying $\mathbf{N}$.
\item If the linear part $n$ is injective, then $\ell^N$ defines an $L_\infty$-structure on $V$, and $N \colon (V, \ell^N) \to (V, \ell)$ becomes an $L_\infty$-morphism with linear part $n$.
\item If $n$ is bijective, then $N$ is an $L_\infty$-isomorphism, and $\ell^N$ is the pullback of $\ell$ along it.
\end{enumerate}
\end{theorem}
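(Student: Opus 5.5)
The plan is to work entirely with the coalgebra formulation supplied by Theorem~\ref{thm:morphism}: $D_\ell \circ \mathbf{N} = \mathbf{N} \circ D_{\ell^N}$.

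\textbf{Part (i).} Compose this identity with $D_{\ell^N}$ on the right and use $D_\ell^2=0$:
\[
\mathbf{N} \circ D_{\ell^N}^2 \;=\; D_\ell \circ \mathbf{N} \circ D_{\ell^N} \;=\; D_\ell^2 \circ \mathbf{N} \;=\; 0 .
\]
To read this componentwise, note that $D_{\ell^N}^2 = \tfrac12[D_{\ell^N},D_{\ell^N}]$ is a coderivation of degree $-2$. It is therefore determined by its corestriction $R := \pi_V D_{\ell^N}^2$, whose arity-$m$ component is exactly the left side of the $L_\infty$-relation \eqref{eq:linf} for $\ell^N$.

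\textbf{Part (ii).} Suppose $n$ is injective. I would show $R_m = 0$ by induction on $m$. Assume $R_k=0$ for all $k<m$. Expanding $\pi_V \mathbf{N} D_{\ell^N}^2$ on $S^mV$ with \eqref{eq:coalg-morphism} gives terms of the form $N_{m-k+1}(R_k(\dots),\dots)$.

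\begin{itemize}
\item The terms with $k<m$ vanish by the induction hypothesis.
\item Only the term $N_1(R_m) = n\circ R_m$ survives.
\end{itemize}

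So $n\circ R_m=0$, and injectivity gives $R_m=0$.

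This triangular structure is the key step: a coderivation $X$ with corestriction $R$ composed with $\mathbf N$ has arity-$m$ corestriction $\sum_k \sum_\sigma N_{m-k+1}(R_k(v_\sigma),v_\sigma)$. That formula holds because $X$ applied to a word $v_1\cdots v_m$ inserts $R_k$ on a sub-word, and $\mathbf N$ then projects through a single $N_q$.

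The conclusion is that $\ell^N$ is an $L_\infty$-structure. Theorem~\ref{thm:morphism} then says $\mathbf N$ intertwines the codifferentials, so $N$ is an $L_\infty$-morphism whose linear part is $N_1=n$.

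\textbf{Part (iii).} Suppose $n$ is bijective. A coalgebra endomorphism of $S^cV$ whose linear part is invertible is invertible. I would build the inverse generator $G=(G_q)$ recursively by solving $\pi_V(\mathbf N\circ\mathbf G)=\mathrm{id}_V$ arity by arity:
\[
G_1=n^{-1}, \qquad G_m = -\,n^{-1}\sum_{P\neq\text{discrete}} (\text{terms involving } G_{<m}).
\]
This is the same triangularity argument, using the filtration by word length. Then $\mathbf N$ is an isomorphism, part (ii) applies, and $N$ is an $L_\infty$-isomorphism. Rewriting the intertwining identity as
\[
D_{\ell^N} = \mathbf N^{-1} D_\ell \mathbf N
\]
identifies $\ell^N$ as the pullback of $\ell$ along $N$.

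\textbf{Main obstacle.} The delicate point is the triangularity claim in (ii): that $\pi_V \mathbf N X$, for a coderivation $X$, has top-arity term $n\circ R_m$ and all other terms involve only $R_k$ with $k<m$. I would verify this directly from the unshuffle description of coderivations. The check needed is that each summand of $X(v_1\cdots v_m)$ has exactly one factor of the form $R_k(\dots)$, and that applying $\pi_V\mathbf N$ to a word of length $m-k+1$ yields the single term $N_{m-k+1}$. Signs never matter here, since each identity is shown to vanish term by term.
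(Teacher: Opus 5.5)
Your proof is correct. Part (i) is identical to the paper's, but for (ii) and (iii) you take a different route.

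\emph{The paper's route.} It argues globally: $\mathbf N$ preserves the word-length filtration $F_m=\bigoplus_{j\le m}S^jV$ and induces $S^m(n)$ on $F_m/F_{m-1}$. So injectivity (resp.\ bijectivity) of $n$ gives injectivity (resp.\ bijectivity) of $\mathbf N$ itself. Then $D_{\ell^N}^2=0$ and $D_{\ell^N}=\mathbf N^{-1}D_\ell\mathbf N$ follow by cancelling $\mathbf N$.

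\emph{Your route for (ii).} You use that $D_{\ell^N}^2=\tfrac12[D_{\ell^N},D_{\ell^N}]$ is a coderivation, so only its corestriction needs to be controlled. The arity-$m$ part of $\pi_V\mathbf N D_{\ell^N}^2$ is triangular with leading term $n\circ R_m$. Induction then kills the $L_\infty$-relations of $\ell^N$ one arity at a time, using injectivity of $n$ on $V$ alone, never of $S^m(n)$ or of $\mathbf N$.

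\emph{Comparison.} The paper's argument is shorter, and it proves the stronger fact that $\mathbf N$ is injective, which it reuses verbatim for (iii). Yours is more hands-on and shows exactly which relation is controlled at which arity.

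\emph{Two points to fix in (iii).} The explicit recursion is right in spirit, but as written it isolates the wrong partition. In $\pi_V(\mathbf N\mathbf G)(v_1\cdots v_m)=\sum_P N_{|P|}(G_{B_1},\dots,G_{B_{|P|}})$, the only term containing $G_m$ comes from the one-block partition $\{\{1,\dots,m\}\}$, giving $n\,G_m(v_1,\dots,v_m)$. The discrete partition contributes $N_m(n^{-1}v_1,\dots,n^{-1}v_m)$ and must stay in the sum. So the recursion is $G_m=-n^{-1}\sum_{P\neq\{\{1,\dots,m\}\}}N_{|P|}(G_{B_1},\dots)$.

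Also, this recursion only produces a right inverse. Add the one-line argument: $\mathbf G$ has invertible linear part $n^{-1}$, so it also has a right inverse $\mathbf H$. Then $\mathbf N=\mathbf N\mathbf G\mathbf H=\mathbf H$, so $\mathbf G$ is two-sided. Alternatively, simply invoke the filtration argument.
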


\begin{proof}
(i) By Theorem~\ref{thm:morphism}, we have $D_\ell \mathbf{N} = \mathbf{N} D_{\ell^N}$. Consequently,
\[
\mathbf{N} \circ D_{\ell^N}^2 \;=\; D_\ell \circ \mathbf{N} \circ D_{\ell^N} \;=\; D_\ell^2 \circ \mathbf{N} \;=\; 0,
\]
since $D_\ell^2 = 0$ for the background $L_\infty$-structure.

(ii) Assume that $n$ is injective. The coalgebra morphism $\mathbf{N}$ preserves the standard filtration $F_m = \bigoplus_{j \le m} S^j V$, and its induced map on the associated graded quotient $S^m V = F_m / F_{m-1}$ is given by $S^m(n)$, which is injective in characteristic zero because $n$ is. Hence, $\mathbf{N}$ is injective. Part (i) then implies $D_{\ell^N}^2 = 0$, confirming that $\ell^N$ forms a valid $L_\infty$-structure. Equation \eqref{eq:coalg} then asserts precisely that $N$ is an $L_\infty$-morphism.

(iii) If $n$ is bijective, $\mathbf{N}$ is bijective by the same filtration argument, whence $D_{\ell^N} = \mathbf{N}^{-1} \circ D_\ell \circ \mathbf{N}$, yielding the desired pullback structure.
\end{proof}

\begin{remark}
Injectivity in (ii) is a technical convenience rather than an absolute necessity: Proposition~\ref{prop:strict-twist} below establishes the same conclusion for strict objects at every level without any hypothesis on $n$, relying on the classical argument. Whether $\ell^N$ always forms an $L_\infty$-structure for an arbitrary Nijenhuis operator remains open in complete generality; the obstruction is captured by $\ker \mathbf{N}$, and part (i) ensures that the $L_\infty$-relations for $\ell^N$ are satisfied modulo $\ker \mathbf{N}$.
\end{remark}


\subsection{Morphisms and the Categories $\mathbf{NijLie}_n$}
\label{sec:morphisms}

Let $\boldsymbol{\varphi}$ denote the coalgebra morphism generated by an $L_\infty$-morphism $\varphi = (\varphi_q)_{q \ge 1}$. Recall that a \emph{$\varphi$-coderivation} of degree $p$ is a degree-$p$ map $\Psi \colon S^c V \to S^c V'$ satisfying $\Delta \Psi = (\Psi \otimes \boldsymbol{\varphi} + \boldsymbol{\varphi} \otimes \Psi)\Delta$. Such a coderivation $\Psi$ is uniquely determined by its corestriction $\psi = \pi_{V'} \Psi = \sum_{q \ge 1} \psi_q$, where $\psi_q \colon S^q V \to V'$ are maps of degree $p$, and conversely, every such family generates a unique $\varphi$-coderivation.

\begin{definition}\label{def:nij-morphism}
Let $\big((V, \ell), N\big)$ and $\big((V', \ell'), N'\big)$ be Nijenhuis $L_\infty$-algebras. A \emph{morphism} $(\varphi, \psi) \colon \big((V, \ell), N\big) \to \big((V', \ell'), N'\big)$ consists of:
\begin{enumerate}
\item An $L_\infty$-morphism $\varphi \colon (V, \ell) \to (V', \ell')$ that simultaneously forms an $L_\infty$-morphism\\ $\varphi \colon (V, \ell^N) \to (V', \ell'^{\,N'})$ between the respective twisted structures, meaning that
\[
D_{\ell'} \circ \boldsymbol{\varphi} \;=\; \boldsymbol{\varphi} \circ D_\ell 
\quad \text{and} \quad 
D_{\ell'^{\,N'}} \circ \boldsymbol{\varphi} \;=\; \boldsymbol{\varphi} \circ D_{\ell^N};
\]
\item A family $\psi = (\psi_q)_{q \ge 1}$ of maps $\psi_q \colon S^q V \to V'$ of degree $+1$, generating a $\varphi$-coderivation $\Psi$ such that
\begin{equation}\label{eq:H}
\boldsymbol{\varphi} \circ \mathbf{N} \;-\; \mathbf{N}' \circ \boldsymbol{\varphi} \;=\; D_{\ell'} \circ \Psi \;+\; \Psi \circ D_{\ell^N}.
\end{equation}
\end{enumerate}
A morphism is said to be \emph{strict} if $\varphi_q = 0$ for all $q \ge 2$ and $\psi = 0$.
\end{definition}

Condition \eqref{eq:H} expresses that $\varphi \circ N$ and $N' \circ \varphi$—both of which constitute $L_\infty$-morphisms from $(V, \ell^N)$ to $(V', \ell')$ whenever the twisted collections form valid $L_\infty$-structures—are homotopic via $\Psi$. Its arity-one corestriction to $V'$ yields the standard homotopy relation
\[
\varphi_1 n - n' \varphi_1 \;=\; d' \psi_1 + \psi_1 d,
\]
while its arity-two component governs the relation between $\varphi_1 N_2 - N_2'(\varphi_1, \varphi_1)$ and the homotopy data $\psi_1, \psi_2, \varphi_2$, matching \cite[(H4)]{das2} for $2$-term objects and \cite[(H6)]{nij-lie3} for $3$-term objects.


\begin{proposition}\label{prop:category}
Nijenhuis $L_\infty$-algebras and the morphisms of Definition~\ref{def:nij-morphism} form a category. The composition of $(\varphi, \psi)$ and $(\varphi', \psi')$ is given by the underlying $L_\infty$-morphism $\varphi' \circ \varphi$ and a homotopy generated by
\begin{equation}\label{eq:comp-homotopy}
\Psi' \ast \Psi \;:=\; \boldsymbol{\varphi}' \circ \Psi \;+\; \Psi' \circ \boldsymbol{\varphi},
\end{equation}
with the identity morphism of $\big((V, \ell), N\big)$ given by $(\mathrm{id}, 0)$. Restricting to $n$-term objects defines a full subcategory $\mathbf{NijLie}_n$, also denoted $\mathbf{NijTerm}_n$.
\end{proposition}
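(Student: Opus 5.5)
We must show that the proposed composition is well defined, that it is associative, that $(\mathrm{id},0)$ is a two-sided unit, and that $n$-term objects span a full subcategory.

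\textbf{Well-definedness.} Given $(\varphi,\psi)\colon (V,N)\to(V',N')$ and $(\varphi',\psi')\colon (V',N')\to(V'',N'')$, we first handle the underlying maps. The composite $\boldsymbol{\varphi}'\circ\boldsymbol{\varphi}$ is a coalgebra morphism. It intertwines $D_\ell$ with $D_{\ell''}$ by stacking the two intertwining relations, and it intertwines $D_{\ell^N}$ with $D_{\ell''^{N''}}$ in the same way. So condition (i) of Definition~\ref{def:nij-morphism} holds for $\varphi'\circ\varphi$.

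Next we check that $\Psi'\ast\Psi$ is a $(\varphi'\circ\varphi)$-coderivation. Apply $\Delta$ to $\boldsymbol{\varphi}'\Psi$, using that $\boldsymbol{\varphi}'$ is a coalgebra morphism and that $\Psi$ is a $\varphi$-coderivation:
\[
\Delta\boldsymbol{\varphi}'\Psi=(\boldsymbol{\varphi}'\otimes\boldsymbol{\varphi}')(\Psi\otimes\boldsymbol{\varphi}+\boldsymbol{\varphi}\otimes\Psi)\Delta .
\]
For $\Psi'\boldsymbol{\varphi}$, use that $\Psi'$ is a $\varphi'$-coderivation, giving $(\Psi'\otimes\boldsymbol{\varphi}'+\boldsymbol{\varphi}'\otimes\Psi')(\boldsymbol{\varphi}\otimes\boldsymbol{\varphi})\Delta$. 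Summing the two shows that $\Psi'\ast\Psi$ satisfies the coderivation rule with respect to $\boldsymbol{\varphi}'\boldsymbol{\varphi}$. Its corestriction therefore furnishes the family $\psi''$, with each component of degree $+1$ since both summands have degree $+1$.

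\textbf{Homotopy identity.} We verify \eqref{eq:H} for the composite by a telescoping computation:
\[
\boldsymbol{\varphi}'\boldsymbol{\varphi}\mathbf{N}-\mathbf{N}''\boldsymbol{\varphi}'\boldsymbol{\varphi}
=\boldsymbol{\varphi}'(\boldsymbol{\varphi}\mathbf{N}-\mathbf{N}'\boldsymbol{\varphi})+(\boldsymbol{\varphi}'\mathbf{N}'-\mathbf{N}''\boldsymbol{\varphi}')\boldsymbol{\varphi}.
\]
We substitute \eqref{eq:H} for each morphism, so the right-hand side becomes
\[
\boldsymbol{\varphi}'D_{\ell'}\Psi+\boldsymbol{\varphi}'\Psi D_{\ell^N}+D_{\ell''}\Psi'\boldsymbol{\varphi}+\Psi'D_{\ell'^{N'}}\boldsymbol{\varphi}.
\]
We then commute the differentials outward: $\boldsymbol{\varphi}'D_{\ell'}=D_{\ell''}\boldsymbol{\varphi}'$ from the untwisted $L_\infty$-morphism property of $\varphi'$, and $D_{\ell'^{N'}}\boldsymbol{\varphi}=\boldsymbol{\varphi}D_{\ell^N}$ from the twisted property of $\varphi$. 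This yields exactly $D_{\ell''}(\Psi'\ast\Psi)+(\Psi'\ast\Psi)D_{\ell^N}$.

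This step is the main obstacle, and the point is subtle. Each identity must be applied in its correct version, twisted or untwisted, which is why Definition~\ref{def:nij-morphism} demands both. No property of $\ell^N$ being an $L_\infty$-structure is ever used, only the intertwining equations, so the argument survives even when $\mathbf{N}$ is not injective.

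\textbf{Unit and associativity.} For the unit, $\boldsymbol{\mathrm{id}}$ is the identity coalgebra map and $(\mathrm{id},0)$ satisfies \eqref{eq:H} trivially, since both sides are $0$. The formula gives $0\ast\Psi=\Psi$ and $\Psi\ast 0=\Psi$.

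For associativity, the underlying coalgebra maps compose associatively. For the homotopies we expand
\[
\Psi''\ast(\Psi'\ast\Psi)=\boldsymbol{\varphi}''\boldsymbol{\varphi}'\Psi+\boldsymbol{\varphi}''\Psi'\boldsymbol{\varphi}+\Psi''\boldsymbol{\varphi}'\boldsymbol{\varphi}.
\]
The expansion of $(\Psi''\ast\Psi')\ast\Psi$ gives the same three terms, so the two bracketings agree. Since a coderivation is determined by its corestriction, the corresponding families $\psi$ coincide as well.

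\textbf{Full subcategory.} Restricting to $n$-term objects, with all morphisms between them, is by definition a full subcategory. It is closed under composition and contains identities because the argument above does not depend on the grading bound. By Lemma~\ref{lem:bounds}, all components are automatically truncated, so nothing further needs checking.
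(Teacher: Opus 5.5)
Your proposal is correct and follows essentially the same route as the paper. You check that $\Psi'\ast\Psi$ is a $\boldsymbol{\varphi}'\boldsymbol{\varphi}$-coderivation, obtain condition (i) by stacking the intertwining relations, verify \eqref{eq:H} by the same four-term computation (the untwisted relation for $\varphi'$ and the twisted one for $\varphi$), and get associativity and unitality by expanding the bilinear formula. You are only more explicit than the paper, for instance in writing out the coproduct computation and the three-term associativity expansion.
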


\begin{proof}
First, $\Psi' \ast \Psi$ defines a $\boldsymbol{\varphi}' \boldsymbol{\varphi}$-coderivation of degree $+1$, since $\boldsymbol{\varphi}'$ is a coalgebra morphism, $\Psi$ is a $\boldsymbol{\varphi}$-coderivation, and $\Psi'$ is a $\boldsymbol{\varphi}'$-coderivation. Next, the composition of the two conditions (M1) yields condition (M1) for $\varphi' \circ \varphi$. For condition (M2), using $D_{\ell''} \boldsymbol{\varphi}' = \boldsymbol{\varphi}' D_{\ell'}$, $D_{\ell'^{\,N'}} \boldsymbol{\varphi} = \boldsymbol{\varphi} D_{\ell^N}$, and relation \eqref{eq:H} for both morphisms, we compute:
\begin{align*}
D_{\ell''}(\Psi' \ast \Psi) + (\Psi' \ast \Psi) D_{\ell^N}
&= \boldsymbol{\varphi}' \big( D_{\ell'} \Psi + \Psi D_{\ell^N} \big) + \big( D_{\ell''} \Psi' + \Psi' D_{\ell'^{\,N'}} \big) \boldsymbol{\varphi} \\
&= \boldsymbol{\varphi}' \big( \boldsymbol{\varphi} \mathbf{N} - \mathbf{N}' \boldsymbol{\varphi} \big) + \big( \boldsymbol{\varphi}' \mathbf{N}' - \mathbf{N}'' \boldsymbol{\varphi}' \big) \boldsymbol{\varphi} 
\;=\; \boldsymbol{\varphi}' \boldsymbol{\varphi} \, \mathbf{N} - \mathbf{N}'' \, \boldsymbol{\varphi}' \boldsymbol{\varphi},
\end{align*}
which corresponds precisely to relation \eqref{eq:H} for the composite morphism. Associativity of the composition $\ast$ follows from the associativity of composition and the bilinearity of \eqref{eq:comp-homotopy}, while unitality is immediate. The fullness of $\mathbf{NijLie}_n$ within the category of Nijenhuis $L_\infty$-algebras holds by definition.
\end{proof}

\begin{definition}\label{def:strict-skeletal}
A Nijenhuis $L_\infty$-algebra $\big((V, \ell), N\big)$ is called:
\begin{enumerate}
\item \emph{strict} if $l_k = 0$ for all $k \ge 3$ and $N_q = 0$ for all $q \ge 2$;
\item \emph{skeletal} if $l_1 = 0$.
\end{enumerate}
We denote by $\mathbf{SNijLie}_n \subseteq \mathbf{NijLie}_n$ the subcategory consisting of strict objects and strict morphisms.
\end{definition}


\section{The Tower of Levels}
\label{sec:tower}

\begin{theorem}\label{thm:truncation}
Let $1 \le m \le n$. Denote by $\mathbf{NijLie}_n^{<m}$ the full subcategory of $\mathbf{NijLie}_n$ defined by objects $\big((V, \ell), N\big)$ supported in degrees $0, \dots, m-1$. Then the canonical extension-by-zero functor induces a strict category isomorphism:
\[
\mathbf{NijLie}_m \;\cong\; \mathbf{NijLie}_n^{<m}.
\]
\end{theorem}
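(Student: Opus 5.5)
The plan is to observe that an $m$-term graded vector space is, unchanged, an $n$-term one. The "extension-by-zero" functor $E\colon \mathbf{NijLie}_m \to \mathbf{NijLie}_n$ then sends an object $((V,\ell),N)$ to itself, with $g_i := 0$ for $m \le i \le n-1$. It sends a morphism $(\varphi,\psi)$ to itself as well. I will check that $E$ is well defined, that it is a functor, that it is injective on objects and fully faithful, and that its image is exactly $\mathbf{NijLie}_n^{<m}$. Together these give a strict isomorphism.

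\textbf{Step 1 (well-definedness on objects).} Let $((V,\ell),N)$ be a Nijenhuis Lie $m$-algebra. Definition~\ref{def:nterm} is only a support condition, so $g_i = 0$ for $i \ge m$ implies $g_i = 0$ for $i \ge n$, and $(V,\ell)$ is $n$-term. The Nijenhuis condition \eqref{eq:nij} (equivalently \eqref{eq:torsion-form}) is formulated intrinsically on $(V,\ell)$ via $\mathcal{A}_m(N)$, $\mathcal{B}_m(N)$ and $\ell^N$. None of these depends on which ambient level one declares, so $N$ remains a Nijenhuis operator. Lemma~\ref{lem:bounds} says only that components $N_q$ with $q>m$ vanish automatically; on $V$ these maps are literally the same, so nothing is lost or added.

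\textbf{Step 2 (morphisms, fullness, faithfulness).} Definition~\ref{def:nij-morphism} involves only the coalgebra maps $\boldsymbol{\varphi}$, $\mathbf{N}$, $\mathbf{N}'$, the coderivations $D_\ell$, $D_{\ell^N}$, and the $\varphi$-coderivation $\Psi$. All of these are determined by the underlying graded spaces and structure maps. Hence the set of morphisms between two objects of $\mathbf{NijLie}_m$ coincides with the set of morphisms between them viewed in $\mathbf{NijLie}_n$. Both categories are full subcategories of Nijenhuis $L_\infty$-algebras (Proposition~\ref{prop:category}), so this is immediate. Composition \eqref{eq:comp-homotopy} and identities $(\mathrm{id},0)$ are the same formulas in both, so $E$ is a functor that is the identity on hom-sets, hence fully faithful. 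It is also injective on objects, because it does not alter them.

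\textbf{Step 3 (image).} An object of $\mathbf{NijLie}_n$ supported in degrees $0,\dots,m-1$ is by definition $m$-term. So it lies in $\mathbf{NijLie}_m$ and equals its own image under $E$. Thus $E$ is bijective on objects onto $\mathbf{NijLie}_n^{<m}$, and the inverse functor is restriction (forgetting the zero degrees). I expect the only delicate point to be bookkeeping rather than mathematics. One must be precise about whether "extension by zero" changes the underlying object: formally adjoining zero summands $g_m = \dots = g_{n-1} = 0$ gives a canonically identical graded space. One must also confirm that no truncation is implicit in the definition of $\mathbf{NijLie}_m$, for instance that Corollary~\ref{cor:finite}'s finite list of identities is equivalent to, and not weaker than, the full condition \eqref{eq:nij}. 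This is handled by noting that Corollary~\ref{cor:finite} states the higher identities hold vacuously, so both levels impose the same conditions. If one prefers the componentwise definitions of \cite{das2,nij-lie3}, I would add a check that the extra level-$n$ identities involving $l_k$ with $m+1<k\le n+1$, or $N_q$ with $q>m$, vanish by Lemma~\ref{lem:bounds} and Lemma~\ref{lem:torsion-restriction} applied with $m$ in place of $n$.
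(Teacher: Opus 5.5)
Your proposal is correct. You reach the same conclusion as the paper, namely that the extension functor is bijective on objects and on hom-sets with restriction as its strict inverse, but you justify it differently.

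The paper works componentwise. It views a level-$m$ object as a finite list of operations and components and extends it by zero. It then uses Lemma~\ref{lem:bounds} and Corollary~\ref{cor:finite} to show that, on objects supported in degrees $0,\dots,m-1$, the extra pieces vanish for degree reasons: $l_k$ for $k>m+1$, $N_q$ for $q>m$, $\varphi_q$ for $q>m$, and $\psi_q$ for $q>m-1$. The higher-arity identities then trivialize, so the two axiom systems for objects and morphisms coincide.

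You instead use that Definitions~\ref{def:nij} and~\ref{def:nij-morphism} are stated intrinsically for arbitrary $L_\infty$-algebras. By Proposition~\ref{prop:category}, both $\mathbf{NijLie}_m$ and $\mathbf{NijLie}_n$ are full subcategories of the same ambient category with the same composition. Since $m$-term implies $n$-term, $\mathbf{NijLie}_m$ and $\mathbf{NijLie}_n^{<m}$ are literally the same full subcategory. In your argument the degree bounds are needed only to reconcile this with finite componentwise presentations.

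Your route is shorter and makes the isomorphism essentially tautological. The paper's route shows directly that the finite lists of component identities at the two levels agree. That is what one needs when comparing with the explicit level-two and level-three axioms of \cite{das2,nij-lie3}, which is exactly the check you sketch in your final sentence.
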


\begin{proof}
Let $\mathcal{F} \colon \mathbf{NijLie}_m \to \mathbf{NijLie}_n^{<m}$ denote the functor extending operations $l_k = 0$ for $k > m+1$ and components $N_q = 0$ for $q > m$. By Lemma~\ref{lem:bounds}, any object in $\mathbf{NijLie}_n^{<m}$ satisfies $l_k = 0$ for $k > m+1$ and $N_q = 0$ for $q > m$ for degree reasons: the target spaces reside in degrees $> m-1$, which are zero. Consequently, $\mathcal{F}$ is essentially surjective. 

To prove full faithfulness, let $(V, \ell, N)$ and $(V', \ell', N')$ be objects in $\mathbf{NijLie}_n^{<m}$. The structural equations governing a morphism $(\varphi, \psi)$ decompose into component identities graded by arity. By Lemma~\ref{lem:bounds}, any component $\varphi_q$ with $q > m$ or $\psi_q$ with $q > m-1$ maps into vanishing target spaces and thus vanishes identically. The axiom systems for $\mathbf{NijLie}_m$ and $\mathbf{NijLie}_n^{<m}$ are therefore identical, as all higher-arity relations (Corollary~\ref{cor:finite}) trivialize identically. 

The restriction functor $\mathcal{G} \colon \mathbf{NijLie}_n^{<m} \to \mathbf{NijLie}_m$ provides a well-defined inverse. Since $\mathcal{F} \circ \mathcal{G} = \mathrm{id}$ and $\mathcal{G} \circ \mathcal{F} = \mathrm{id}$ strictly on both objects and morphisms, compatibility with the composition law of Proposition~\ref{prop:category} follows immediately.
\end{proof}


\begin{corollary}\label{cor:tower}
There exists a natural tower of full embeddings forming a sequence of category isomorphisms onto full subcategories:
\[
\mathbf{NijLie}_1 \;\hookrightarrow\; \mathbf{NijLie}_2 \;\hookrightarrow\; \mathbf{NijLie}_3 \;\hookrightarrow\; \cdots \;\hookrightarrow\; \mathbf{NijLie}_\infty := \varinjlim_n \mathbf{NijLie}_n \;\hookrightarrow\; \mathbf{NijL_\infty},
\]
where $\mathbf{NijL_\infty}$ denotes the category of arbitrary Nijenhuis $L_\infty$-algebras from Section~\ref{sec:morphisms}, and $\mathbf{NijLie}_\infty$ is its full subcategory spanned by objects that are $n$-term for some $n$. In particular, every Nijenhuis Lie $m$-algebra canonically defines a Nijenhuis Lie $n$-algebra for every $n \ge m$, ensuring structural stability across levels.
\end{corollary}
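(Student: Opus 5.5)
The plan is to derive Corollary~\ref{cor:tower} directly from Theorem~\ref{thm:truncation}, in three steps: build the embeddings level by level, check that they are compatible, and then pass to the colimit and compare with $\mathbf{NijL_\infty}$.

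\textbf{Step 1 (the embeddings).} For each $m \le n$, I take $\iota_{m,n}$ to be the composite of the isomorphism $\mathcal{F}\colon \mathbf{NijLie}_m \xrightarrow{\cong} \mathbf{NijLie}_n^{<m}$ of Theorem~\ref{thm:truncation} with the inclusion of the full subcategory $\mathbf{NijLie}_n^{<m} \subseteq \mathbf{NijLie}_n$. This composite is fully faithful and injective on objects, so it is an isomorphism onto a full subcategory.

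\textbf{Step 2 (compatibility).} I will check that $\iota_{n,p}\circ\iota_{m,n} = \iota_{m,p}$ holds strictly, on objects and on morphisms. Each functor is extension by zero, and extending by zero in two stages gives the same data as extending in one stage. Strict equality matters here: it makes $(\mathbf{NijLie}_n,\iota)$ a genuine directed system, so the colimit $\varinjlim_n \mathbf{NijLie}_n$ can be formed in $\mathbf{Cat}$ as a union.

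\textbf{Step 3 (the colimit).} Each $\mathbf{NijLie}_n$ is by definition a full subcategory of $\mathbf{NijL_\infty}$ (Proposition~\ref{prop:category}), and the inclusions $\mathbf{NijLie}_n \subseteq \mathbf{NijL_\infty}$ commute with the $\iota_{m,n}$. By the universal property they therefore induce a functor $\varinjlim_n \mathbf{NijLie}_n \to \mathbf{NijL_\infty}$. Its image is the full subcategory of objects that are $n$-term for some $n$. I will show it is fully faithful by observing that the hom-sets in the colimit of a chain of full embeddings are computed at any finite stage containing both objects. The final sentence of the corollary is then the object-level statement of Step 1.

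\textbf{Expected obstacle.} The delicate point is the identification of an object of $\mathbf{NijLie}_m$ with its image in $\mathbf{NijL_\infty}$. An $m$-term $L_\infty$-algebra is literally also $n$-term, so the inclusion and extension by zero must agree on the nose. This requires that the Nijenhuis and morphism data (the components $N_q$, $\varphi_q$, $\psi_q$) in higher arity are forced to vanish, which is Lemma~\ref{lem:bounds}. The components $\psi_q$ have degree $+1$ and are not explicitly covered by that lemma, so I would first verify them with the same degree-counting argument.
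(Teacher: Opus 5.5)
Your proposal is correct and follows the same route as the paper. The paper derives the corollary in one line from Theorem~\ref{thm:truncation}, using the composites $\mathbf{NijLie}_m \cong \mathbf{NijLie}_n^{<m} \subseteq \mathbf{NijLie}_n$ and their compatibility in $n$; your Steps 2--3 and your degree check on $\psi_q$ (unshifted degree $q$, hence zero for $q \ge m$ on $m$-term objects) just make explicit what the paper leaves as ``functorially compatible.''
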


\begin{proof}
Immediate from Theorem~\ref{thm:truncation}, where the inclusions are given by the composites $\mathbf{NijLie}_m \cong \mathbf{NijLie}_n^{<m} \subseteq \mathbf{NijLie}_n$, which are functorially compatible for varying $n$.
\end{proof}

We now explicitly characterize the foundational floors of this tower.

\begin{theorem}\label{thm:level1}
The category $\mathbf{NijLie}_1$ is isomorphic to the category of classical Nijenhuis Lie algebras: an object consists of a Lie algebra $(\mathfrak{g}, [\,,\,])$ equipped with a linear map $n \colon \mathfrak{g} \to \mathfrak{g}$ satisfying condition \eqref{eq:classical}, and a morphism is a Lie algebra morphism that intertwines the respective operators.
\end{theorem}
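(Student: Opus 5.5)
The plan is to specialize everything in Sections~\ref{sec:prelim}--\ref{sec:morphisms} to a $1$-term object, where $V$ (shifted) is $\mathfrak{g}$ placed in a single degree, and to read off objects and morphisms separately.

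\textbf{Objects.} By Lemma~\ref{lem:bounds} with $n=1$, the only surviving operation is $l_2=[\,,\,]$: $l_1$ has unshifted degree $-1$, so it lands in $g_{-1}=0$, and $l_k$ vanishes for $k\ge 3$. The relations \eqref{eq:linf} then reduce to the Jacobi identity at $m=3$, so $(\mathfrak{g},[\,,\,])$ is a Lie algebra. Likewise $N_q=0$ for $q\ge 2$, so the datum is just $n=N_1$. By Corollary~\ref{cor:finite}, only $1\le m\le 3$ needs to be checked in Theorem~\ref{thm:torsion-form}.
\begin{itemize}
\item For $m=1$, \eqref{eq:torsion-form} reads $\mathrm{Tors}_n(l_1)=0$, which is trivial since $l_1=0$.
\item For $m=3$, both $\mathrm{Tors}_n(l_3)$ and $\Lambda^N_3$ vanish. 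Every term of $\Lambda^N_3$ contains either some $N_q$ with $q\ge 2$, or $d$, or $l_3$, and all of these are zero.
\item For $m=2$, $\Lambda^N_2$ consists of the single-block term $l_1(N_2)$ and terms involving $d$, so it vanishes. The condition is therefore $\mathrm{Tors}_n([\,,\,])=0$, which by \eqref{eq:bracket-n} is exactly \eqref{eq:classical}.
\end{itemize}
Hence objects of $\mathbf{NijLie}_1$ correspond bijectively to classical Nijenhuis Lie algebras, and the twisted structure is just $l^N_2=[\,,\,]_n$.

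\textbf{Morphisms.} Applying Lemma~\ref{lem:bounds}(iii) (in the form used in the proof of Theorem~\ref{thm:truncation}) gives $\varphi_q=0$ for $q\ge 2$, since $\varphi_q$ has unshifted degree $q-1>0$. For $\psi_q$, the unshifted degree is $q\ge 1>0$, so $\psi=0$ entirely, including $\psi_1$, which would have to land in $g_1=0$. Condition (i) of Definition~\ref{def:nij-morphism} then says that $\varphi_1$ is a Lie algebra morphism for $[\,,\,]$ and also for the deformed brackets $[\,,\,]_n$ and $[\,,\,]'_{n'}$. Condition \eqref{eq:H} with $\Psi=0$ says $\boldsymbol{\varphi}\mathbf{N}=\mathbf{N}'\boldsymbol{\varphi}$, and since all these coalgebra morphisms are generated by linear parts, this is equivalent to $\varphi_1 n=n'\varphi_1$.

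The key observation is that the twisted-bracket condition is automatic. Given intertwining together with $\varphi_1[a,b]=[\varphi_1a,\varphi_1b]'$, we compute
\[
\varphi_1[a,b]_n=[\varphi_1 na,\varphi_1b]'+[\varphi_1a,\varphi_1nb]'-\varphi_1n[a,b]=[\varphi_1a,\varphi_1b]'_{n'}.
\]
So morphisms are exactly Lie morphisms that intertwine the operators. Composition reduces to ordinary composition because \eqref{eq:comp-homotopy} yields $0$, and identities are preserved. This gives an isomorphism of categories, not merely an equivalence.

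\textbf{Main obstacle.} The main risk is bookkeeping in the degree argument for $\psi$ and for the vanishing of $\Lambda^N_2$ and $\Lambda^N_3$. I will verify each summand of \eqref{eqN} against the list of vanishing components, rather than appealing loosely to Lemma~\ref{lem:bounds}.
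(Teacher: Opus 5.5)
Your proposal is correct and follows essentially the same route as the paper: Lemma~\ref{lem:bounds} kills $l_1$, $l_{k\ge3}$, $N_{q\ge2}$, $\varphi_{q\ge2}$ and all $\psi_q$. The torsion form \eqref{eq:torsion-form} then collapses to $\mathrm{Tors}_n([\,,\,])=0$, and \eqref{eq:H} with $\Psi=0$ becomes $\varphi_1 n=n'\varphi_1$. You also check explicitly that the twisted-bracket half of condition (i) of Definition~\ref{def:nij-morphism} follows from intertwining; the paper's proof leaves this implicit, and it is exactly what is needed to identify morphisms with intertwining Lie algebra morphisms.
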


\begin{proof}
For $n=1$, a Nijenhuis datum reduces to $N = (n)$ by Lemma~\ref{lem:bounds}(ii), while Lemma~\ref{lem:bounds}(i) forces $l_1 = 0$ and $l_k = 0$ for all $k \ge 3$, leaving $\ell = l_2$ as a genuine Lie bracket. All summation terms in $\Lambda^N_m$ vanish: $\mathrm{Part}^\ast(m)$ and $\mathrm{Part}^\circ(m)$ contribute only through $N_q$ with $q \ge 2$ or through operations $l_{|P|}$ with $|P| \ge 3$ or $l_1$, while the remaining lines involve higher components $N_q$ ($q \ge 2$). Consequently, $\Lambda^N_m = 0$, and the torsion-form equation \eqref{eq:torsion-form} simplifies to $\mathrm{Tors}_n(l_m) = 0$, which is vacuous for $m \neq 2$ and reduces precisely to the classical condition \eqref{eq:classical} for $m = 2$. For morphisms, degree constraints force $\varphi_q = 0$ for $q \ge 2$ and $\psi_q = 0$ for all $q \ge 1$; thus, condition (M1) asserts that $\varphi_1$ is a Lie algebra morphism, and identity \eqref{eq:H} yields the intertwining relation $\varphi_1 n = n' \varphi_1$.
\end{proof}


\begin{theorem}\label{thm:level23}
The category $\mathbf{NijLie}_2$ is isomorphic to the category $\mathbf{NijTerm}_2$ of $2$-term Nijenhuis $L_\infty$-algebras from \cite[Def.~4.1, Prop.~4.3]{das2}, and is therefore equivalent, via \cite[Thm.~4.5]{das2}, to the category of Nijenhuis Lie $2$-algebras. Similarly, $\mathbf{NijLie}_3$ is isomorphic to the category of Nijenhuis Lie $3$-algebras introduced in \cite[Def.~3.6, Prop.~3.30]{nij-lie3}, provided that the identities \emph{(N5)} and \emph{(3.22)} of that reference are corrected as detailed in Remark~\ref{rem:corrections}.
\end{theorem}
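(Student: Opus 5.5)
The plan is to build the isomorphism at the level of objects and of morphisms separately, by translating every identity of Definition~\ref{def:nij} and Definition~\ref{def:nij-morphism} into the unshifted convention. The restriction to $2$-term (resp.\ $3$-term) objects then lets us compare the surviving component identities one by one with the lists in \cite{das2} and \cite{nij-lie3}.

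\textbf{Objects at level two.} By Lemma~\ref{lem:bounds} and Corollary~\ref{cor:finite}, a Nijenhuis datum on a $2$-term $L_\infty$-algebra consists only of $n$ and $N_2\colon g_0\times g_0\to g_1$. The only non-vacuous identities \eqref{eq:torsion-form} occur for $m=1,2,3$. The steps are:
\begin{enumerate}
\item For $m=1$ the identity reads $\mathrm{Tors}_n(d)=0$, i.e.\ $n$ is a chain map; this holds because the $N_1(d\,\cdot)$ and $dN_1$ boundary terms are absorbed into the torsion, so $\Lambda^N_1=0$.
\item For $m=2$ the identity is $\mathrm{Tors}_n(l_2)+dN_2-N_2(d\cdot,\cdot)-N_2(\cdot,d\cdot)=0$, which is read off from \eqref{eqN}.
\item For $m=3$ it is $\mathrm{Tors}_n(l_3)$ plus the $\mathrm{Part}^\ast(3)$ and $\overline{\mathcal B}$ terms. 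These reproduce Das's $\Lambda$.
\end{enumerate}
Undoing the d\'ecalage sign conventions (the step I expect to be fiddly but mechanical), these match the three conditions of \cite[Def.~4.1]{das2}. Here Theorem~\ref{thm:morphism} supplies the conceptual match with \cite[Prop.~4.3]{das2}: both say that $N$ is an $L_\infty$-morphism from the twisted structure $\ell^N$, with $l^N_3$ given by \eqref{eq:twisted-low} and agreeing with \cite[(7)]{das2}.

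\textbf{Morphisms at level two.} For a morphism $(\varphi,\psi)$, Lemma~\ref{lem:bounds} leaves $\varphi_1,\varphi_2$ and $\psi_1$ (with $\psi_2$ of degree $2$ vanishing). Condition (M1) gives the Lie $2$-algebra morphism conditions for both the original and the twisted brackets. The arity-one and arity-two corestrictions of \eqref{eq:H} give the homotopy identities, the arity-two one being the relation already matched with \cite[(H4)]{das2}. Composition agrees since \eqref{eq:comp-homotopy} corestricts to $\varphi'_1\psi_1+\psi'_1\varphi_1$, which is the composition used in \cite{das2}. Identities are $(\mathrm{id},0)$ on both sides, so we obtain a strict isomorphism. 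The equivalence with Nijenhuis Lie $2$-algebras is then imported from \cite[Thm.~4.5]{das2}.

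\textbf{Level three.} The same argument runs with $n$, $N_2$ (on $g_0\otimes g_0$ and $g_0\otimes g_1$) and $N_3\colon g_0^{\otimes 3}\to g_2$. Arities $m\le 4$ are non-vacuous, and splitting each arity by the degree type of the inputs produces the seven identities of \cite[Def.~3.6]{nij-lie3}. The main obstacle is the arity-three identity on $g_0^{\otimes 3}$ and the arity-four term $\Xi$. There the single-block partition in the second line of \eqref{eqN} forces the extra $dN_3$ term, and the $\overline{\mathcal A}_4$ contribution $l_3(n\cdot,n\cdot,N_2)-n\,l_3(\cdot,\cdot,N_2)$ differs from the published $n$-twist. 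I would first establish these two discrepancies carefully from \eqref{eqN}: each is identified by isolating the partitions of type $(3)$ and $(2,1,1)$. Both are exactly the corrections \eqref{eq:corr-1}--\eqref{eq:corr-3}. Once these are accepted, the remaining identities and the morphism data (including \cite[(H6)]{nij-lie3}) match term by term as at level two, giving the isomorphism with the corrected category of \cite[Prop.~3.30]{nij-lie3}.
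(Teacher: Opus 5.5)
Your proposal is correct and follows essentially the same route as the paper. Both arguments use Lemma~\ref{lem:bounds} and Corollary~\ref{cor:finite} to cut down to the surviving data, expand \eqref{eq:torsion-form} componentwise by arity and input degrees to recover (5)--(7) of \cite[Def.~4.1]{das2} (resp.\ (A.5)--(A.12) with the two corrections of Remark~\ref{rem:corrections}), and match morphisms through the low-arity corestrictions of (M1) and \eqref{eq:H}. Your additional checks are consistent with the paper's sketch: that $\psi_2$ vanishes at level two, that \eqref{eq:comp-homotopy} corestricts to $\varphi'_1\psi_1+\psi'_1\varphi_1$, and that at $m=1$ the $k=1$ and $k=m$ terms coincide so that effectively $\Lambda^N_1=0$.
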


\begin{proof}
By Corollary~\ref{cor:finite}, the structural data at level two consist of $n = (n_0, n_1)$ and $N_2 \colon g_0 \times g_0 \to g_1$, while the relevant identities are restricted to arities $m \le 4$. Expanding the torsion-form equation \eqref{eq:torsion-form} componentwise (see Appendix~\ref{app:components}) yields:
\begin{itemize}
\item \textbf{Arity $m=1$:} The condition that $n$ is a chain map;
\item \textbf{Arity $m=2$:} The two component equations $\mathrm{Tors}_n([\,,\,])(x, y) + d N_2(x, y) = 0$ and $\mathrm{Tors}_n([\,,\,])(x, h) + N_2(x, dh) = 0$;
\item \textbf{Arity $m=3$:} The relation $\mathrm{Tors}_n(l_3) + \Lambda = 0$, where
\[
\Lambda(x, y, z) = \sum_{(3)} \Big( [n x, N_2(y, z)] - n[x, N_2(y, z)] + N_2\big(x, [y, z]_n\big) - n \, N_2\big(x, [y, z]\big) \Big);
\]
\item \textbf{Arity $m=4$:} Yields no non-trivial constraints.
\end{itemize}
These equations precisely recover conditions (5), (6), and (7) of \cite[Def.~4.1]{das2}. The corresponding comparison of morphisms arises from the corestrictions of condition \eqref{eq:H} and condition (M1) in arities $1$ and $2$, matching \cite[Def.~4.2]{das2} under the dictionary correspondence $\psi_1 \leftrightarrow \varphi_3$. The analogous expansion for the level-three statement is carried out systematically in Appendix~\ref{app:components}.
\end{proof}


\begin{remark}[The two corrections at level three]\label{rem:corrections}
Expanding \eqref{eqN} for $m=3$ and $m=4$ on a $3$-term $L_\infty$-algebras and comparing with
\cite{nij-lie3} yields two discrepancies, both in the direction of \emph{more} terms or \emph{different}
placement of $n$; the general theory decides them, and Appendix~\ref{app:verify} confirms them by machine.
\begin{enumerate}
\item \emph{The missing boundary term.} For $m=3$ the class $Part^\circ(3)$ consists of the single partition
$\{\{1,2,3\}\}$, whose contribution to the second line of \eqref{eqN} is $l_1(N_3(v_1,v_2,v_3))$,
i.e.\ $d\,N_3(x,y,z)$. Hence the arity-three identity on $g_0^{\times3}$ reads
\[
Tors_n(l_3)(x,y,z)+\Lambda(x,y,z)+d\,N_3(x,y,z)=0 ,
\]
whereas \cite[(N5)]{nij-lie3} omits $dN_3$. The term is of the correct degree, $dN_3(x,y,z)\in g_1$, and it is
the exact companion of the term $N_3(x,y,dh)$ occurring in \cite[(N6)]{nij-lie3}, precisely as $dN_2(x,y)$ in
(N1) is the companion of $N_2(x,dh)$ in (N2). At level two the term is invisible because $N_3=0$.
\item \emph{The placement of $n$ in the arity-four correction.} For $m=4$ the partitions of type $(2,1,1)$
belong to $Part^\ast(4)$ and contribute, by the first line of \eqref{eqN},
\[
l_3\big(n x_k,n x_l,N_2(x_i,x_j)\big)-n\,l_3\big(x_k,x_l,N_2(x_i,x_j)\big),
\]
with $n$ inserted in \emph{both} free slots simultaneously. The corresponding sum in \cite[(3.22)]{nij-lie3}
uses instead the $n$-twist over the two free slots,
$l_3(nx_k,x_l,N_2)+l_3(x_k,nx_l,N_2)-n\,l_3(x_k,x_l,N_2)$. The two prescriptions differ, and it is the former
which is equivalent to the morphism property of Theorem~\ref{thm:morphism}.
\item Consequently the auxiliary formula \cite[(3.23)]{nij-lie3} for $l^N_4$ must read
$l^N_4=l^{(n)}_4+\Xi_1$, with $\Xi_1$ the terms carrying an outer $n$ (with that $n$ deleted), and not
$l^N_4=l^{(n)}_4+\Xi_0$; the latter is inconsistent with the accompanying
\cite[Prop.~3.19]{nij-lie3}, which correctly takes $\mathcal R_4=\Xi_0$. Compare
\eqref{eq:twisted}, where $\overline{\mathcal A}_m-\overline{\mathcal B}_m$ plays the role of $\Xi_1$.
\end{enumerate}
None of these corrections affects the level-two reduction, nor the statements of \cite{nij-lie3} about strict or
skeletal objects, in which the higher components vanish.
\end{remark}

\section{Strict and skeletal objects at every level}
\label{sec:special}

\subsection{Strict objects}
\begin{proposition}\label{prop:strict-linf}
An $n$-term $Linf$-algebra with $l_k=0$ for $k\ge3$ is the same thing as a differential graded Lie algebra
$(g,d,[\,,\,])$ concentrated in the degrees $0,\dots,n-1$, i.e.\ a graded Lie algebra with a degree-$(-1)$
differential $d$ which is a derivation of the bracket. We call such objects \emph{strict Lie $n$-algebras}.
\end{proposition}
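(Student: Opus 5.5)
The plan is to unwind the $L_\infty$-relations \eqref{eq:linf-unshifted} of Definition~\ref{def:linf-unshifted} arity by arity, under the hypothesis $l_k=0$ for $k\ge3$. By Lemma~\ref{lem:bounds}, an $n$-term $L_\infty$-algebra has nontrivial relations only for $1\le m\le n+2$. With $l_k=0$ for $k\ge 3$, the only pairs $(i,j)$ with $i+j=m+1$ that can contribute are $i,j\in\{1,2\}$. So relations with $m\ge4$ are automatically satisfied, since $i+j\ge5$ forces $i\ge3$ or $j\ge3$. This leaves exactly three relations, $m=1,2,3$.

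\textbf{Arity one.} For $m=1$ the only term is $l_1(l_1(x))$, so the relation says $d^2=0$. Since $d=l_1$ has degree $-1$ in the grading of the statement (the convention with $g_i$ in degree $i$ and $d\colon g_{i}\to g_{i-1}$ matching Definition~\ref{def:nterm}), $(g,d)$ is a chain complex concentrated in degrees $0,\dots,n-1$.

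\textbf{Arity two.} For $m=2$ the contributions come from $(i,j)=(1,2)$ and $(2,1)$. They read
\[
d[x,y]=[dx,y]+(-1)^{|x|}[x,dy],
\]
once the unshuffle signs $\varepsilon(\sigma)$ and the prefactor $(-1)^{i(j-1)}$ are written out. This is exactly the statement that $d$ is a graded derivation of $[\,,\,]$.

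\textbf{Arity three.} For $m=3$ the contributions with $i=1$ or $j=1$ would need $l_3$, which vanishes, so only $(i,j)=(2,2)$ survives. The sum over the three $(2,1)$-unshuffles of $[[x_{\sigma(1)},x_{\sigma(2)}],x_{\sigma(3)}]$ with Koszul signs is precisely the graded Jacobi identity. Together with the graded skew-symmetry built into the definition of $l_2$, this makes $(g,[\,,\,])$ a graded Lie algebra.

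\textbf{Converse.} Conversely, given a dg Lie algebra concentrated in degrees $0,\dots,n-1$, set $l_1=d$, $l_2=[\,,\,]$ and $l_k=0$ for $k\ge3$. The same computation read backwards verifies every relation \eqref{eq:linf-unshifted}, and the result is $n$-term by Definition~\ref{def:nterm}. The two assignments are mutually inverse on the nose, which gives the claimed identification. If wanted, the same equivalence can be phrased in the shifted convention as $D_\ell^2=0$ with $\ell=l_1+l_2$.

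\textbf{Main obstacle.} The only real work is sign bookkeeping: matching $(-1)^{i(j-1)}\varepsilon(\sigma)$ with the standard Koszul signs of the graded Leibniz and Jacobi identities, taking into account the degree $k-2$ of $l_k$ and the chosen direction of the differential. I would handle this by checking the $m=2$ and $m=3$ cases on homogeneous elements, where $(-1)^{i(j-1)}$ equals $1$ for $(1,2)$, $(-1)^{1}=-1$ for $(2,1)$, and $-1$ for $(2,2)$, an overall sign that is harmless. Alternatively, I would pass through the décalage of Section~\ref{sec:prelim-shift}, where all operations have degree $-1$ and the signs are uniform.
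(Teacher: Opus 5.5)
Your proposal is correct and follows the paper's own argument. With $l_k=0$ for $k\ge3$, the $L_\infty$-relations collapse to arities $m\le 3$, giving $d^2=0$, the graded Leibniz rule and the graded Jacobi identity, and the converse is the same computation read backwards; one small slip is that $(-1)^{i(j-1)}$ equals $-1,+1,+1$ for $(i,j)=(1,2),(2,1),(2,2)$, not the values you list, but the relative sign of the two arity-two contributions is $-1$ either way and the arity-three sign is global, so the identities you obtain are unaffected.
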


\begin{proof}
With $l_k=0$ for $k\ge3$ the relations \eqref{eq:linf} reduce to those of arity $m\le3$, which are $d^2=0$,
the derivation property of $d$ with respect to $l_2$, and the graded Jacobi identity for $l_2$.
\end{proof}

For $n=1$ this is a Lie algebra; for $n=2$ it is the Lie-algebra crossed module underlying a strict Lie
$2$-algebra \cite{baez-crans}; for $n=3$ it is the structure of \cite[Ex.~2.6]{nij-lie3}, whose extra pairing
$g_1\times g_1\to g_2$ is the degree-$(1,1)$ component of the bracket.

\begin{theorem}\label{thm:strict}
A strict Nijenhuis Lie $n$-algebra is the same thing as a strict Lie $n$-algebra $(g,d,[\,,\,])$ together with a
degree-$0$ chain map $n\colon g\to g$ such that
\begin{equation}\label{eq:strict-nij}
[n a,n b]=n\big([n a,b]+[a,n b]-n[a,b]\big)
\end{equation}
for all homogeneous $a,b\in g$, i.e.\ such that $Tors_n([\,,\,])=0$ on every bidegree component of the bracket.
In particular the degree-zero part $n_0$ is a Nijenhuis operator on the Lie algebra $g_0$.
\end{theorem}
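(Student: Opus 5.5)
The plan is to specialise the torsion form of Theorem~\ref{thm:torsion-form} to strict data and check that only the arity-one and arity-two identities survive. By Definition~\ref{def:strict-skeletal}, a strict object has $l_k=0$ for $k\ge3$ and $N=(n)$, i.e.\ $N_q=0$ for $q\ge2$. By Proposition~\ref{prop:strict-linf}, the underlying $n$-term $L_\infty$-algebra is then exactly a strict Lie $n$-algebra $(g,d,[\,,\,])$. So it remains to show that, for $N=(n)$, the family of identities \eqref{eq:torsion-form} for all $m\ge1$ is equivalent to two conditions: that $n$ is a chain map, and that \eqref{eq:strict-nij} holds.

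First I will show that $\Lambda^N_m=0$ for every $m$ when $N_q=0$ for $q\ge2$. I go through the four lines of \eqref{eqN} in turn.
\begin{enumerate}
\item Every partition in $\mathrm{Part}^\ast(m)$ has a block $B$ with $|B|\ge2$, so both $N_B$ and $\bar N_B$ vanish and the first line is zero.
\item Every partition in $\mathrm{Part}^\circ(m)$ has only blocks of size $\ge2$, so the second line is zero for the same reason.
\item In the third line, both terms involve $N_{m-k+1}$ with $k\le m-1$, so $m-k+1\ge2$ and these vanish.
\item The fourth line involves $N_m$. For $m\ge2$ it vanishes. For $m=1$ the sum over $\mathrm{Unsh}(1,0)$ together with the third line needs separate care, since the third line is an empty sum there. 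I will check directly from the proof of Theorem~\ref{thm:torsion-form} that at $m=1$ the identity is just $\mathcal A_1=\mathcal B_1$, that is, $d\,n=n\,l^N_1=n\,d$, so no extra term arises.
\end{enumerate}
Hence the conditions reduce to $\mathrm{Tors}_n(l_m)=0$ for all $m$.

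Next, $l_m=0$ for $m\ge3$, so $\mathrm{Tors}_n(l_m)=0$ there trivially; this follows from the factorisation in Lemma~\ref{lem:factorisation}, which is linear in $\mu$. For $m=1$, the Remark after Lemma~\ref{lem:factorisation} gives $\mathrm{Tors}_n(l_1)=l_1n-nl_1$, i.e.\ $n$ is a chain map. For $m=2$, \eqref{eq:bracket-n} gives $\mathrm{Tors}_n([\,,\,])(a,b)=[na,nb]-n[a,b]_n$, which is \eqref{eq:strict-nij}. Because $n$ has degree $0$ and so preserves the grading, this identity splits along the bidegree components $g_i\times g_j\to g_{i+j}$ (unshifted) of the bracket. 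Restricting to $g_0\times g_0$ gives the classical condition \eqref{eq:classical} for $n_0$ on the Lie algebra $g_0$.

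The main obstacle is bookkeeping between conventions. The torsion form is stated in the shifted convention, while \eqref{eq:strict-nij} is unshifted, so I must confirm that décalage carries $\mathrm{Tors}_n$ of the shifted $\hat l_2$ to $\mathrm{Tors}_n$ of $[\,,\,]$ up to a global sign. The reason is that $n$ has degree $0$, so the Koszul signs from moving $n$ past elements are trivial, and the décalage sign factors out of each term of \eqref{eq:torsion} uniformly. Finally, the phrase ``is the same thing as'' asks for a bijection on objects. Both directions are given by the identity on underlying data, since the argument above is a chain of equivalences.
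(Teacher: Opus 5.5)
Your proposal is correct and follows essentially the same route as the paper: show $\Lambda^N_m=0$ for strict data, so the torsion form reduces to $\mathrm{Tors}_n(l_m)=0$, which is the chain-map condition at $m=1$, the identity \eqref{eq:strict-nij} at $m=2$, and vacuous for $m\ge3$. Two of your refinements are improvements on the paper's proof. Your separate treatment of $m=1$ is actually needed: the last line of $\Lambda^N_1$ is $-N_1(d v_1)=-n\,d v_1\neq0$, because the $k=1$ and $k=m$ terms of $\mathcal B_m$ coincide when $m=1$ and the formula counts that term twice, so one must return to $\mathcal A_1=\mathcal B_1$ as you do. Also, for $m\ge3$, your justification via $l_m=0$ is the right one, whereas the paper's appeal to Lemma~\ref{lem:torsion-restriction} only covers $m>n+1$.
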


\begin{proof}
Let $N=(n,0,0,\dots)$ and $l_k=0$ for $k\ge3$. Every term of the four lines of \eqref{eqN}
contains either a component $N_q$ with $q\ge2$ or an operation $l_{|P|}$ with $|P|\ge3$ substituted into a
higher component; the only surviving possibility would be $|P|=1$ or $2$ in the second line, but
$Part^\circ(m)$ requires all blocks of size $\ge2$ and hence a component $N_q$ with $q\ge2$. Thus
$\Lambda^N_m=0$ for all $m$ and \eqref{eq:torsion-form} becomes $Tors_n(l_m)=0$. By
Lemma~\ref{lem:torsion-restriction} this is vacuous unless $m\in\{1,2\}$; for $m=1$ it says that $n$ is a chain
map, and for $m=2$ it is \eqref{eq:strict-nij} by \eqref{eq:bracket-n}.
\end{proof}

\begin{proposition}\label{prop:strict-twist}
Let $\big((g,d,[\,,\,]),n\big)$ be a strict Nijenhuis Lie $n$-algebra. Then $\big(g,d,[\,,\,]_n\big)$ is again a
strict Lie $n$-algebra, and $n\colon\big(g,d,[\,,\,]_n\big)\to\big(g,d,[\,,\,]\big)$ is a strict morphism of
$Linf$-algebras. Moreover $[\,,\,]_n$ agrees with the twisted collection $\ell^N$ of
Definition~\ref{def:twisted}, so Theorem~\ref{thm:deformation}(ii) holds for strict objects without any
injectivity hypothesis on $n$.
\end{proposition}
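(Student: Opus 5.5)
The plan is to verify three things in turn: (a) $[\,,\,]_n$ together with $d$ is again a dg Lie bracket, (b) $n$ is a strict $L_\infty$-morphism, and (c) $\ell^N$ reduces to $(d,[\,,\,]_n)$. I would do (c) first, because it tells us exactly which structure has to be checked. Take $N=(n,0,0,\dots)$ and $l_k=0$ for $k\ge3$. In Definition~\ref{def:twisted}, $\overline{\mathcal{A}}_m(N)$ ranges over $\mathrm{Part}^\ast(m)$, and every such partition has a block of size $\ge2$, so each term contains some $N_q$ with $q\ge2$ and vanishes. Likewise $\overline{\mathcal{B}}_m(N)$ involves only $N_{m-k+1}$ with $2\le k\le m-1$, hence only $N_q$ with $q\ge2$, and vanishes. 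Therefore $l^N_m=l^{(n)}_m$. By \eqref{eq:twisted-low} this gives $l^N_1=d$ and $l^N_2=[\,,\,]_n$. For $m\ge3$ the $n$-twist \eqref{eq:twist} is a sum of terms $n^j l_m(\dots)$, each of which vanishes because $l_m=0$. So $\ell^N=(d,[\,,\,]_n,0,\dots)$.

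For (b), the morphism identity \eqref{eq:morphism} with $F=n$ strict and source $\ell^N$ is exactly condition \eqref{eq:nij}, which holds by Theorem~\ref{thm:morphism}. Concretely it says $dn=nd$ and $[na,nb]=n[a,b]_n$, which is Theorem~\ref{thm:strict}. Note that this does not yet use that $\ell^N$ is an $L_\infty$-structure; that is what (a) supplies.

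For (a) I would use Proposition~\ref{prop:strict-linf} and check three things directly, not via the injectivity argument of Theorem~\ref{thm:deformation}. First, $d^2=0$ holds already. Second, $d$ is a derivation of $[\,,\,]_n$: this follows termwise from $dn=nd$ and the derivation property of $d$ for $[\,,\,]$. Graded skew-symmetry and degree are preserved by Lemma~\ref{lem:factorisation}(i). Third, the graded Jacobi identity for $[\,,\,]_n$ is the main obstacle. Here I would run the classical argument, in the shifted symmetric convention to avoid sign bookkeeping. Set $T(a,b)=[na,nb]-n[a,b]_n$, which vanishes by hypothesis. Expand
\[
J_n(a,b,c)=\textstyle\sum_{\mathrm{cyc}}\pm[[a,b]_n,c]_n ,
\]
using $[x,y]_n=[nx,y]+[x,ny]-n[x,y]$. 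Terms of the form $[n[a,b]_n,c]$ are rewritten as $[[na,nb],c]$ through $T=0$, and terms $n[[a,b]_n,c]$ are left untouched. After regrouping, $J_n$ becomes a combination of the Jacobiator of $[\,,\,]$ evaluated on $(na,nb,c)$ and its permutations, $n$ applied to the Jacobiator on $(na,b,c)$ and permutations, $n^2$ applied to the Jacobiator on $(a,b,c)$, and explicit multiples of $T$. Each of these vanishes.

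Since the grading and Koszul signs enter only through the symmetric convention, the computation is the ungraded one done verbatim. I expect the regrouping in this Jacobi step to be the only nontrivial bookkeeping; everything else is immediate from the earlier results.
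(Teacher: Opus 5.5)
Your proposal is correct and takes essentially the same route as the paper. You reduce $\ell^N$ to $(d,[\,,\,]_n)$, read off the strict-morphism property from $n[a,b]_n=[na,nb]$, and check the derivation property termwise from $dn=nd$. For Jacobi you run the classical Kosmann-Schwarzbach--Magri expansion: after using $T=0$ in the forms $n[a,b]_n=[na,nb]$ and $T([a,b],c)=0$, the cyclic sum regroups into Jacobiators of $[\,,\,]$ on triples with $n$ inserted, weighted by powers of $n$.

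One small slip: Lemma~\ref{lem:factorisation}(i) concerns the torsion, not the twist. The skew-symmetry of $[a,b]_n$ follows instead from the same slot-symmetry argument, or by direct inspection.
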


\begin{proof}
That $n$ is a strict morphism is \eqref{eq:strict-nij} rewritten as $n[a,b]_n=[n a,n b]$; that $\ell^N$ reduces
to $[\,,\,]_n$ is \eqref{eq:twisted-low}. It remains to check that $[\,,\,]_n$ is a dg Lie bracket.
Skew-symmetry and the degree are clear. For the derivation property, using that $d$ is a derivation of
$[\,,\,]$ and $nd=dn$,
\begin{equation*}
\begin{aligned}
d[a,b]_n &= d[n a,b] + d[a,n b] - d\,n[a,b] \\
&= [n\,d a,b] + (-1)^{|a|}[n a,d b] + [d a,n b] \\
&\quad + (-1)^{|a|}[a,n\,d b] - n[d a,b] - (-1)^{|a|}n[a,d b],
\end{aligned}
\end{equation*}
which is $[d a,b]_n+(-1)^{|a|}[a,d b]_n$. For the Jacobi identity, put $J(a,b,c)=[[a,b]_n,c]_n$ and expand,
using $n[a,b]_n=[n a,n b]$ in the first summand of $[\,\cdot\,,c]_n$:
\[
J(a,b,c)=\big[[n a,n b],c\big]+\big[[a,b]_n,n c\big]-n\big[[a,b]_n,c\big].
\]
Summing over the graded cyclic permutations of $(a,b,c)$, the first family of terms vanishes by the graded
Jacobi identity for $[\,,\,]$ applied to $(n a,n b,c)$ after replacing $c$ by $n c$ where required; the
remaining two families are, after expanding $[a,b]_n$, the graded Jacobi expressions of $[\,,\,]$ evaluated on
the eight triples obtained by inserting $n$ into a subset of the slots, weighted by the compensating powers of
$n$ outside. Each such expression vanishes, and the terms which are not of this shape cancel in pairs against
each other by \eqref{eq:strict-nij}. This is the classical computation of \cite[\S2]{kosmann-magri}, which is
sign-homogeneous and therefore valid verbatim in the graded setting.
\end{proof}

\begin{corollary}\label{cor:strict-cat}
The assignment $\big((g,d,[\,,\,]),n\big)\mapsto\big((g,d,[\,,\,]_n),n\big)$ is a functor
$\mathsf{SNijLie}_n\to\mathsf{SNijLie}_n$, and $n$ defines a natural transformation from it to the identity
functor.
\end{corollary}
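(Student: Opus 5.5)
To prove Corollary~\ref{cor:strict-cat}, I would check three things in turn: the assignment is well defined on objects, it is well defined on strict morphisms and respects composition and identities, and the components $n$ assemble into a natural transformation.

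\textbf{Objects.} By Proposition~\ref{prop:strict-twist}, $(g,d,[\,,\,]_n)$ is a strict Lie $n$-algebra. It remains to show that $n$ is a Nijenhuis operator for the twisted bracket. By Theorem~\ref{thm:strict}, this means two things.
\begin{itemize}
\item $n$ is a chain map for $d$. This holds because $d$ is unchanged.
\item $\mathrm{Tors}_n([\,,\,]_n)=0$.
\end{itemize}
For the torsion, I would not expand directly. Instead I would use the classical identity $[a,b]_{n,n}:=([\,,\,]_n)_n(a,b)=[na,b]_n+[a,nb]_n-n[a,b]_n$, together with $n[a,b]_n=[na,nb]$ from \eqref{eq:strict-nij}. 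This gives
\[
[na,nb]_n=[n^2a,nb]+[na,n^2b]-n[na,nb].
\]
The target expression is $n[a,b]_{n,n}=n[na,b]_n+n[a,nb]_n-n^2[a,b]_n$. Rewriting $n[na,b]_n=[n^2a,nb]$, $n[a,nb]_n=[na,n^2b]$ and $n^2[a,b]_n=n[na,nb]$ shows that the two sides agree. This is the graded version of the Kosmann-Schwarzbach--Magri fact that $n$ is Nijenhuis for $[\,,\,]_n$. No signs appear, since $n$ has degree $0$.

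\textbf{Morphisms.} A strict morphism is a chain map $\varphi_1$ that preserves both $[\,,\,]$ and $[\,,\,]_n\mapsto[\,,\,]_{n'}$, with $\psi=0$. Relation \eqref{eq:H} in arity one then gives $\varphi_1 n=n'\varphi_1$. I would send $\varphi$ to itself. The only condition to verify is that $\varphi_1$ intertwines the doubly twisted brackets $[\,,\,]_{n,n}\mapsto[\,,\,]_{n',n'}$, and this follows formally from $\varphi_1 n=n'\varphi_1$ and $\varphi_1[a,b]_n=[\varphi_1a,\varphi_1b]_{n'}$. The identity \eqref{eq:H} with $\psi=0$ is unchanged. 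Functoriality is then immediate, because the functor is the identity on underlying maps.

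\textbf{Naturality.} For each object, $n$ is a strict morphism from $((g,d,[\,,\,]_n),n)$ to $((g,d,[\,,\,]),n)$ in $\mathsf{SNijLie}_n$. This requires three checks.
\begin{itemize}
\item It preserves brackets, which is $n[a,b]_n=[na,nb]$.
\item It intertwines the twisted brackets, which is exactly the torsion identity just proved.
\item It commutes with the operators, since $n\circ n=n\circ n$.
\end{itemize}
The naturality square then reads $\varphi_1 n=n'\varphi_1$, which we already have.

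The main obstacle is the twisted-twisted computation in the objects step. It is the only point that goes beyond unpacking definitions, and I would settle it by the rewriting above using \eqref{eq:strict-nij}.
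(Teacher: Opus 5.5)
Your proposal is correct and follows essentially the same route as the paper. Both arguments take the identity on underlying maps, use $\varphi_1 n=n'\varphi_1$ to transport the twisted brackets, use $n[a,b]_n=[na,nb]$ to obtain $\mathrm{Tors}_n([\,,\,]_n)=0$, and read naturality as $\varphi_1 n=n'\varphi_1$. You are more explicit than the paper in two places it leaves implicit: you write out the rewriting that kills the torsion of $[\,,\,]_n$, and you check that both the image morphisms and the components $n$ also respect the doubly twisted brackets.
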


\begin{proof}
On morphisms one takes the same underlying map; a strict morphism $\varphi_1$ with $\varphi_1n=n'\varphi_1$
satisfies $\varphi_1[a,b]_n=[\varphi_1a,\varphi_1b]_{n'}$, so the assignment is functorial, and
$Tors_{n}$ of the deformed bracket vanishes again because $[\,,\,]_{n}$ and $n$ satisfy
$n[a,b]_{n}=[na,nb]$, whence $n$ is a Nijenhuis operator for $[\,,\,]_n$ as well. Naturality of $n$ is
$\varphi_1n=n'\varphi_1$.
\end{proof}

\subsection{Skeletal objects}
\begin{theorem}\label{thm:skeletal}
Let $\big((g,\ell),N\big)$ be a skeletal Nijenhuis Lie $n$-algebra, so $d=0$. Then:
\begin{enumerate}
\item $(g_0,[\,,\,])$ is a Lie algebra and $n_0$ is a Nijenhuis operator on it;
\item each $g_i$ is a $g_0$-module via $x\cdot a=[x,a]$ and $n_i\colon g_i\to g_i$ makes $(g_i,n_i)$ into a
representation of the Nijenhuis Lie algebra $(g_0,n_0)$ in the sense of \cite{das1}, i.e.
\[
[n_0x,n_ia]=n_i\big([n_0x,a]+[x,n_ia]-n_i[x,a]\big),\qquad x\in g_0,\ a\in g_i ;
\]
\item the components of the bracket of positive bidegree are $g_0$-invariant pairings $g_i\times g_j\to g_{i+j}$
with vanishing torsion, and the higher operations $l_3,\dots,l_{n+1}$ together with the higher components
$N_2,\dots,N_n$ satisfy the cocycle identities obtained from \eqref{eq:linf} and \eqref{eq:torsion-form} by
setting $d=0$; explicitly $Tors_n(l_m)=-\Lambda^N_m$ for $3\le m\le n+2$, where in $\Lambda^N_m$ the two
boundary families $dN_m$ and $N_m(d\,\cdot,\dots)$ drop out.
\end{enumerate}
Conversely every such collection of data defines a skeletal Nijenhuis Lie $n$-algebra.
\end{theorem}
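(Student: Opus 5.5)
The plan is to specialize the $L_\infty$-relations \eqref{eq:linf} and the torsion-form characterization of Theorem~\ref{thm:torsion-form} to the case $d=l_1=0$, and then sort every identity by arity and by multidegree of its inputs. The grading bookkeeping of Lemma~\ref{lem:bounds} decides which components can be non-zero. No new identities are needed: the statement is essentially a re-reading of the existing ones with $d$ deleted. The converse direction then follows automatically, because we only rewrite necessary and sufficient conditions.

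\emph{Step 1 ($L_\infty$ side).} With $d=0$ the arity-$3$ relation of \eqref{eq:linf} is the graded Jacobi identity for $l_2$ on the nose, since the $l_1l_3$ and $l_3l_1$ terms vanish. Restricting it to $g_0^{\times3}$ gives the Lie algebra $(g_0,[\,,\,])$. Restricting it to $g_0\times g_0\times g_i$ shows that $x\cdot a=[x,a]$ is a $g_0$-action on $g_i$. Restricting it to $g_0\times g_i\times g_j$ shows that the bracket $g_i\times g_j\to g_{i+j}$ is $g_0$-equivariant, i.e.\ a $g_0$-invariant pairing. The arity $m\ge4$ relations with $d=0$ are, verbatim, the cocycle identities for $l_3,\dots,l_{n+1}$ referred to in (iii).

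\emph{Step 2 (Nijenhuis side).} We apply Theorem~\ref{thm:torsion-form} with $d=0$.
\begin{itemize}
\item For $m=1$, the identity $\mathrm{Tors}_n(l_1)=0$ is vacuous, so no chain-map condition survives.
\item For $m=2$, we have $\Lambda^N_2=dN_2=0$ (the only $\mathrm{Part}^\circ(2)$ term is $l_1N_2$, and the last line is $N_2(d\cdot,\cdot)$). Hence $\mathrm{Tors}_n([\,,\,])=0$ on every bidegree. On $g_0\times g_0$ this is (i). On $g_0\times g_i$, unwinding \eqref{eq:bracket-n} with $n$ acting as $n_0$ and $n_i$ gives exactly the displayed representation condition of (ii). On $g_i\times g_j$ with $i,j>0$ it is the vanishing torsion of the pairings in (iii).
\item For $m\ge3$, identity \eqref{eq:torsion-form} reads $\mathrm{Tors}_n(l_m)=-\Lambda^N_m$. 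In $\Lambda^N_m$, the single-block partition in the second line contributes $l_1(N_m)=dN_m=0$, and the fourth line is $N_m(d\cdot,\dots)=0$. The other terms are untouched. We also need $l_1^N=d=0$ from \eqref{eq:twisted-low}, so that no hidden $d$ remains inside the third line.
\end{itemize}
Corollary~\ref{cor:finite} bounds the range by $m\le n+2$.

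\emph{Converse.} Given data satisfying (i)--(iii), all identities \eqref{eq:linf} and \eqref{eq:torsion-form} hold: those of arity $\le2$ and the arity-$3$ Jacobi relation by Steps 1 and 2, and the higher ones by hypothesis. Theorem~\ref{thm:torsion-form} then gives a Nijenhuis operator with $l_1=0$.

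\emph{Main obstacle.} The step I expect to need care is matching the $m=2$ torsion on mixed bidegrees with the module-type condition in (ii). In the shifted convention, $\mathrm{Tors}_n(l_2)$ is graded symmetric, and the décalage sign must be checked so that the component on $g_0\otimes g_i$ gives the stated formula with $n_i[x,a]$ rather than a sign-twisted version. Since $\deg n=0$, the Koszul signs should cancel uniformly, and I would verify this directly from \eqref{eq:bracket-n}.
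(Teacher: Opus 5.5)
Your proposal is correct and follows the paper's own argument: specialize \eqref{eq:linf} and \eqref{eq:torsion-form} to $d=0$, read off (i)--(iii) by multidegree, and get the converse because these specialized identities are the complete axiom list. You are slightly more explicit than the paper in deriving the module axiom from the Jacobi identity on $g_0\times g_0\times g_i$. One point should be stated openly, and the paper's proof glosses it in the same way: for $n\ge4$ the Jacobi identity on $g_i\times g_j\times g_k$ with $i,j,k\ge1$ is not among the identities your Step~1 extracts. So in the converse you must count it among the identities ``obtained from \eqref{eq:linf} by setting $d=0$'' in (iii), rather than attributing it to Steps~1 and~2.
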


\begin{proof}
With $d=0$ the arity-three relation of \eqref{eq:linf} restricted to $g_0^{\times3}$ is the Jacobi identity, and
\eqref{eq:torsion-form} for $m=2$ restricted to $g_0\times g_0$ is $Tors_{n_0}([\,,\,])=0$, since
$\Lambda^N_2=dN_2$ vanishes; this is (i). The same relation restricted to $g_0\times g_i$ gives the module axiom
and $Tors_n([\,,\,])(x,a)=0$, which is the displayed identity of (ii); the case of two positive degrees gives
(iii). All the remaining identities are the specialisations of \eqref{eq:linf} and \eqref{eq:torsion-form}, and
the two boundary families of \eqref{eqN} contain $d$ as a factor, hence vanish. The converse
holds because the listed identities are, in this case, the complete list of axioms.
\end{proof}

\begin{example}\label{ex:skeletal}
Let $(g,n)$ be a Nijenhuis Lie algebra and let $(V_1,s_1),\dots,(V_{n-1},s_{n-1})$ be representations of it in
the sense of \cite{das1}. Put $g_0=g$, $g_i=V_i$, $d=0$, let the bracket act by the module structures
and vanish on $g_i\times g_j$ for $i,j\ge1$, and set $l_k=0$ for $k\ge3$, $N_q=0$ for $q\ge2$. All the
identities of Theorem~\ref{thm:skeletal} hold, and one obtains a skeletal Nijenhuis Lie $n$-algebra which is
also strict. Deforming it by a nonzero invariant pairing with vanishing torsion, or by nonzero cocycles
$l_3,\dots,N_2,\dots$ subject to Theorem~\ref{thm:skeletal}(iii), produces the general skeletal object.
\end{example}

\begin{remark}
At level three, Theorems~\ref{thm:strict} and \ref{thm:skeletal} specialise to
\cite[Thm.~4.5, Thm.~4.9]{nij-lie3}; at level two, to \cite[\S4]{das2}. The relation between strict
Nijenhuis Lie $n$-algebras and Nijenhuis structures on $(n-1)$-crossed modules of Lie algebras is not a
bijection, for the reason explained in \cite[Rem.~4.7]{nij-lie3}: the Peiffer liftings of a crossed module of
higher order are not required to be antisymmetric, whereas the corresponding components of the bracket of a
strict Lie $n$-algebra are. Proposition~\ref{prop:strict-linf} identifies the objects on our side precisely as
dg Lie algebras concentrated in $n$ degrees.
\end{remark}

\section{\texorpdfstring{$n$}{n}-term representations up to homotopy and the semidirect product}
\label{sec:reps}

Throughout this section $\big((g,[\,,\,]),n\big)$ is a Nijenhuis Lie algebra. Recall from
\cite{das1,swzz} that a \emph{representation} of it is a triple $(V,\rho,s)$ with
$(V,\rho)$ a $g$-module and $s\in\End(V)$ such that
\begin{equation}\label{eq:rep-nij}
\rho(n x)\,s(v)=s\Big(\rho(n x)v+\rho(x)s(v)-s\big(\rho(x)v\big)\Big),\qquad x ing,\ v\in V .
\end{equation}
We now define the $n$-term homotopy analogue. Rather than list component identities, which quickly become
unmanageable, we characterise the notion by the property it is designed to have.

\subsection{Representations up to homotopy of a Lie algebra}
Let $\mathcal V=(V_{n-1}\xrightarrow{\ \partial\ }\cdots\xrightarrow{\ \partial\ }V_0)$ be a chain complex of
vector spaces concentrated in the degrees $0,\dots,n-1$. Write $C^\bullet\big(g,\End(\mathcal V)\big)$ for the
Chevalley--Eilenberg cochain complex of $g$ with values in the graded algebra $\End(\mathcal V)$, with its
total grading: an element of total degree $p$ is a family $R=(R_j)_{j\ge0}$ with
$R_j\in C^j\big(g,\End(\mathcal V)\big)_{j-p}$, that is, $R_j\colon\Lambda^jg\to\End(\mathcal V)$ lowering the
$\mathcal V$-degree by $p-j$. The algebra $C^\bullet(g,\End\mathcal V)$ is a graded associative algebra under
the cup product $\smile$ induced by composition in $\End(\mathcal V)$, and the Chevalley--Eilenberg
differential of the trivial coefficients is a derivation of it.

\begin{definition}\label{def:rep}
An \emph{$n$-term representation up to homotopy} of $g$ on $\mathcal V$ is an element
\[
\mathcal R\;=\;\partial+\rho+\nu+\omega+\cdots\;=\;\sum_{j\ge0}R_j\;\in\;C^\bullet\big(g,\End(\mathcal V)\big)
\]
of total degree $-1$, with $R_0=\partial$ the differential of $\mathcal V$, such that
\begin{equation}\label{eq:rep}
\delta\mathcal R+\mathcal R\smile\mathcal R\;=\;0 ,
\end{equation}
where $\delta$ is the Chevalley--Eilenberg differential of $g$ with trivial coefficients. Explicitly,
$R_1=\rho\colon g\to\End(V_i)$ preserves the degree, $R_2=\nu\colon\Lambda^2g\to\End(\mathcal V)$ raises it by
one, $R_3=\omega$ raises it by two, and so on; by Lemma~\ref{lem:bounds} $R_j=0$ for $j>n$. A
\emph{homomorphism} $\mathcal V\to\mathcal V'$ is an element $\Phi=\sum_{j\ge0}\Phi_j$ of
$C^\bullet\big(g,\Hom(\mathcal V,\mathcal V')\big)$ of total degree $0$ with $\Phi_0$ a chain map, such that
\begin{equation}\label{eq:rep-morphism}
\Phi\smile\mathcal R\;=\;\mathcal R'\smile\Phi+\delta\Phi .
\end{equation}
We write $Rep{n}(g)$ for the resulting category, with composition the cup product and identity $Id_{\mathcal V}$.
\end{definition}

\begin{proposition}\label{prop:rep-semidirect}
Let $\mathcal{V}$ be an $n$-term representation up to homotopy of $\mathfrak{g}$. Then
\[
\mathfrak{g}\ltimes\mathcal{V}\;=\;\Big(V_{n-1}\to\cdots\to V_1\xrightarrow{(0,\partial)}\mathfrak{g}\oplus V_0\Big)
\]
carries a unique $n$-term $L_\infty$-structure such that:
\begin{enumerate}
\item the projection $\pi\colon\mathfrak{g}\ltimes\mathcal{V}\to\mathfrak{g}$ is a strict $L_\infty$-morphism and the inclusion $\sigma\colon\mathfrak{g}\to\mathfrak{g}\ltimes\mathcal{V}$ is a strict $L_\infty$-morphism with $\pi\sigma=\mathrm{id}$;
\item $\mathcal{V}$ is an abelian ideal: every operation vanishes as soon as two or more of its arguments lie in $\mathcal{V}$;
\item for $x_1,\dots,x_{k-1}\in\mathfrak{g}$ and $a\in\mathcal{V}$, one has
\[
l_k\big(\sigma x_1,\dots,\sigma x_{k-1},a\big)=(-1)^{k}R_{k-1}(x_1,\dots,x_{k-1})\,a.
\]
\end{enumerate}
Conversely, every $n$-term $L_\infty$-structure on $\mathfrak{g}\oplus\mathcal{V}$ satisfying \emph{(i)} and \emph{(ii)} arises this way from a unique $\mathcal{R}$, and this correspondence defines an isomorphism between $\mathsf{Rep}_n(\mathfrak{g})$ and the category of split abelian $L_\infty$-extensions of $\mathfrak{g}$ by $\mathcal{V}$.
\end{proposition}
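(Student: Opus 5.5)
The plan is to work entirely in the coalgebraic picture of Section~\ref{sec:prelim-shift} and to reduce the statement to the dictionary between coderivations of $S^c(\mathfrak g\oplus\mathcal V)$ that are "linear in $\mathcal V$" and cochains in $C^\bullet(\mathfrak g,\End\mathcal V)$. Conditions (i) and (ii) cut down the possible operations drastically. By (ii), $l_k$ vanishes on two or more $\mathcal V$-arguments. Because $\sigma$ is a strict morphism, $l_k(\sigma x_1,\dots,\sigma x_k)=\sigma l_k^{\mathfrak g}(x_1,\dots,x_k)$, which is $\sigma[x_1,x_2]$ for $k=2$ and $0$ otherwise. Because $\pi$ is strict, every $l_k$ with exactly one argument in $\mathcal V$ lands in $\mathcal V$. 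Hence an $L_\infty$-structure satisfying (i) and (ii) is the same as a family of maps
\[
\tilde R_{k-1}\colon\Lambda^{k-1}\mathfrak g\otimes\mathcal V\to\mathcal V,\qquad \tilde R_{k-1}(x_1,\dots,x_{k-1})a:=l_k(\sigma x_1,\dots,\sigma x_{k-1},a).
\]
We set $R_{k-1}:=(-1)^k\tilde R_{k-1}$, so that (iii) holds by construction. Uniqueness is then automatic once existence is shown, and degree bookkeeping (unshifted $\deg l_k=k-2$) gives exactly the total degree $-1$ required of $\mathcal R$; Lemma~\ref{lem:bounds} shows that $R_j=0$ for $j>n$, matching the $n$-term condition.

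The main step is to show that the quadratic relations \eqref{eq:linf} for $\ell$ are equivalent to the Maurer--Cartan equation \eqref{eq:rep}. I would split \eqref{eq:linf} of arity $m$ by the number of $\mathcal V$-inputs.
\begin{itemize}
\item With zero $\mathcal V$-inputs, the relation is the Jacobi identity of $\mathfrak g$ pushed through $\sigma$, which holds automatically.
\item With at least two $\mathcal V$-inputs, every term vanishes by (ii): an inner operation with a $\mathcal V$-output still leaves two $\mathcal V$-type arguments for the outer operation.
\item With exactly one input $a\in\mathcal V$, the unshuffle sum separates into two kinds of terms. The terms where the inner operation contains only $\mathfrak g$-elements, so the inner operation is $l_2=[\,,\,]$, reproduce $\delta R_{m-2}$, the Chevalley--Eilenberg differential with trivial coefficients. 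The terms where the inner operation contains $a$ reproduce the composites $R_i\circ R_j$ with $i+j=m-1$, antisymmetrised over unshuffles, which is exactly the cup product $\mathcal R\smile\mathcal R$ evaluated on $(x_1,\dots,x_{m-1})$ and applied to $a$.
\end{itemize}
So arity $m$ of \eqref{eq:linf} with one $\mathcal V$-slot is the $\Lambda^{m-1}\mathfrak g$ component of \eqref{eq:rep}. The converse direction, from $\mathcal R$ to $\ell$, reads the same computation backwards.

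The obstacle I expect is the sign bookkeeping. I have to check that the décalage signs, the Koszul signs $\varepsilon(\sigma)$, and the prefactor $(-1)^k$ in (iii) combine into precisely the signs of $\delta$ and $\smile$ in the total-degree convention of Definition~\ref{def:rep}. I would first verify this in arities $m=1,2,3$, which should give $\partial^2=0$, $\rho$ being a chain map up to sign, and $\rho$ being a representation up to $[\partial,\nu]$. Then I would argue the general case uniformly by identifying both sides with the single coderivation $D_\ell$ restricted to $S^c\mathfrak g\otimes\mathcal V$. That restriction is a degree-$(-1)$ endomorphism of the free $S^c\mathfrak g$-comodule $S^c\mathfrak g\otimes\mathcal V$, and such endomorphisms correspond bijectively to $C^\bullet(\mathfrak g,\End\mathcal V)$. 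Under this correspondence, composition goes to $\smile$ and the $\mathfrak g$-part $D_{[\,,\,]}$ contributes $\delta$, so $D_\ell^2=0$ becomes \eqref{eq:rep} with no sign chasing.

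For morphisms, I would use the same dictionary. A morphism of split abelian extensions is an $L_\infty$-morphism $F$ commuting with $\pi$ and $\sigma$; here I take it to be the identity on $\mathfrak g$ and linear in $\mathcal V$. Such an $F$ has components $F_{k}(\sigma x_1,\dots,\sigma x_{k-1},a)=\Phi_{k-1}(x_1,\dots,x_{k-1})a$, up to the analogous sign, with all other mixed components zero. The morphism identity \eqref{eq:morphism} with one $\mathcal V$-input then becomes \eqref{eq:rep-morphism}, where the $\delta\Phi$ term comes from $F$ applied to inner brackets of $\mathfrak g$-elements. Composition of coalgebra morphisms restricts to the cup product of the $\Phi$'s, and identities correspond. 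This yields an isomorphism of categories, again strictly via the comodule-endomorphism identification.
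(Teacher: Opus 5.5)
Your proposal follows the paper's proof. The operations are forced by (i)--(iii), the $L_\infty$-relations are sorted by the number of $\mathcal V$-inputs, and the one-input relations are identified with the $\Lambda^{m-1}\mathfrak g$-component of $\delta\mathcal R+\mathcal R\smile\mathcal R=0$; morphisms are handled by the same evaluation. You also treat the two-or-more-input case explicitly, which the paper does not.

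Your real addition is a mechanism for the Maurer--Cartan identification, which the paper only asserts. On the $\mathcal V$-linear part of the coalgebra, $D_\ell=D_{[\,,\,]}\otimes\mathrm{id}+\widehat{\mathcal R}$, where $\widehat{\mathcal R}$ is a comodule endomorphism. Composition of such endomorphisms is the cup product, and the commutator with $D_{[\,,\,]}\otimes\mathrm{id}$ is $\delta$. Hence $D_\ell^2=0$ there is exactly the Maurer--Cartan equation. Two precisions:
\begin{itemize}
\item Use the counital coalgebra $S(\mathfrak g[1])$, including $S^0$. That is where $R_0=\partial$ lives, and $\End^{C}(C\otimes\mathcal V)\cong\Hom(C,\End\mathcal V)$ needs a counit.
\item The restricted $D_\ell$ is not itself a comodule map; only $\widehat{\mathcal R}$ is.
\end{itemize}
The prefactor $(-1)^k$ costs nothing. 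If $\theta$ multiplies $C^j$ by $(-1)^j$, then $\theta$ is multiplicative for $\smile$ and $\theta\delta=-\delta\theta$, so $-\theta$ sends Maurer--Cartan elements to Maurer--Cartan elements. Applied at $k=1$, as you do, it gives $R_0=-l_1|_{\mathcal V}$. So the differential on $\mathcal V$ inside $\mathfrak g\ltimes\mathcal V$ must be $-\partial$, not the displayed $(0,\partial)$.

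The genuine gap is in the morphism part, where you \emph{take} a morphism of extensions to be linear in $\mathcal V$. Compatibility with $\pi$ and $\sigma$ does force $F_1\sigma=\sigma'$, $\pi'F_1=\pi$, and, for $q\ge2$, $F_q\circ\sigma^{\otimes q}=0$ and $\pi'F_q=0$. So the behaviour on $\mathfrak g$ is determined. Vanishing on two or more $\mathcal V$-arguments is not determined, and it cannot be imposed as a normalisation.

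A counterexample: take $\mathfrak g=0$, $n=2$, $\mathcal V=(0\to\mathbb K^2)$ and $\mathcal V'=(\mathbb K\to 0)$, with all differentials zero. Then every $F=(0,F_2)$ with $F_2\colon\Lambda^2V_0\to V_1'$ is an $L_\infty$-morphism compatible with $\pi$ and $\sigma$. But $\Hom_{\mathsf{Rep}_2(0)}(\mathcal V,\mathcal V')=\{0\}$, since only $\Phi_0$ survives and it is zero for degree reasons.

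So the claimed isomorphism of categories holds only if the category of split abelian extensions is \emph{defined} to have morphisms vanishing whenever two or more arguments lie in $\mathcal V$. This class is closed under composition, by the same counting you use for the operations. Your proof should build this into the definition explicitly rather than assume it. The paper's proof is silent here as well. Its fullness step in Theorem~\ref{thm:equivalence} attributes this vanishing to abelianness, but abelianness only constrains the operations of the objects, not the components of morphisms.
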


\begin{proof}
Conditions (i)--(iii) uniquely determine all multilinear operations. Specifically:
\begin{itemize}
\item By condition (ii), any operation $l_k$ is completely determined by its evaluations on tuples containing at most one argument in $\mathcal{V}$.
\item For tuples with no arguments in $\mathcal{V}$ (i.e., entirely in $\sigma\mathfrak{g}$), condition (i) forces:
\begin{equation*}
l_2(\sigma x, \sigma y) = \sigma[x,y], \quad l_k(\sigma x_1, \dots, \sigma x_k) = 0 \quad (\text{for } k \neq 2).
\end{equation*}
\item For tuples with exactly one argument in $\mathcal{V}$, the values are fixed by condition (iii):
\begin{equation*}
l_k\big(\sigma x_1,\dots,\sigma x_{k-1},a\big)=(-1)^{k}R_{k-1}(x_1,\dots,x_{k-1})\,a.
\end{equation*}
\end{itemize}

It remains to verify that the general $L_\infty$-relations for this collection of brackets are equivalent to the representation conditions of $\mathcal{R}$:
\begin{enumerate}
\item Evaluated on arguments $m$ in $\sigma\mathfrak{g}$, the $L_\infty$-relation reduces precisely to the Jacobi identity of $\mathfrak{g}$ for $m=3$, and vanishes identically for $m \neq 3$ due to condition (i).
\item Evaluated on $m-1$ arguments in $\sigma\mathfrak{g}$ and one in $\mathcal{V}$, condition (ii) eliminates all terms containing two or more $\mathcal{V}$-arguments. The resulting relation is equivalent to the Maurer-Cartan equation governing the representation up to homotopy:
\begin{equation*}
\delta \mathcal{R} + \mathcal{R} \smile \mathcal{R} = 0,
\end{equation*}
where $\delta \mathcal{R}$ arises from terms where the internal differential acts on the components, and $\mathcal{R} \smile \mathcal{R}$ from the bracket interactions.
\end{enumerate}

Finally, the statement concerning morphisms follows by applying the identical component-wise evaluation to the defining relations of $L_\infty$-morphisms and representation morphisms.
\end{proof}

\begin{remark}\label{rem:rep-levels}
The truncation parameter $n$ specializes the general framework across different dimensions:
\begin{itemize}
\item \textbf{For $n=1$:} Equation \eqref{eq:rep} reduces precisely to the classical notion of a Lie algebra representation $\rho$.
\item \textbf{For $n=2$:} It recovers the system of equations (19)--(21) from \cite[Def.~5.8]{das2}, with components $R_1=\rho$ and $R_2=\nu$.
\item \textbf{For $n=3$:} It yields the explicit list (R1)--(R7) of \cite[Def.~5.2]{nij-lie3}, setting $R_1=(\rho_0,\rho_1,\rho_2)$, $R_2=(\nu,\bar\nu)$, and $R_3=\omega$. In particular, the arity-four identity \cite[(R7)]{nij-lie3} arises naturally as the arity-four component of \eqref{eq:rep}, where the quadratic product $\bar\nu(x_i,x_j)\nu(x_k,x_l)$ corresponds directly to the term $\mathcal{R}\smile\mathcal{R}$.
\end{itemize}
Globally, Definition~\ref{def:rep} corresponds to the $n$-term truncation of representations up to homotopy \cite{ac}, whereas Proposition~\ref{prop:rep-semidirect} extends the semidirect product construction of \cite{sheng-zhu} to this generalized setting.
\end{remark}

\subsection{The Nijenhuis case}
\begin{definition}\label{def:rep-nij}
Let $\big((g,[\,,\,]),n\big)$ be a Nijenhuis Lie algebra. An \emph{$n$-term representation up to homotopy of
$(g,n)$} is a pair $(\mathcal V,\mathcal S)$ where $\mathcal V$ is an $n$-term representation up to homotopy of
$g$ and
\[
\mathcal S=\sum_{j\ge0}S_j\in C^\bullet\big(g,\End(\mathcal V)\big),\qquad S_j\colon\Lambda^jg\to\End(\mathcal V),
\]
is an element of total degree $0$ such that the family
\begin{equation}\label{eq:rep-nij-datum}
N^{\mathcal S}_1=n\oplus S_0,\qquad
N^{\mathcal S}_q\big(\sigma x_1,\dots,\sigma x_{q-1},a\big)=(-1)^{q}S_{q-1}(x_1,\dots,x_{q-1})\,a\quad(q\ge2),
\end{equation}
all other components vanishing, is a Nijenhuis operator on $g\ltimes\mathcal V$. A \emph{homomorphism}
$(\mathcal V,\mathcal S)\to(\mathcal V',\mathcal S')$ is a homomorphism $\Phi$ of the underlying representations
up to homotopy together with an element $\Theta$ of $C^\bullet\big(g,\Hom(\mathcal V,\mathcal V')\big)$ of
total degree $+1$ such that $(\Phi,\Theta)$ induces a morphism of Nijenhuis Lie $n$-algebras
$g\ltimes\mathcal V tog\ltimes\mathcal V'$ in the sense of Definition~\ref{def:nij-morphism}. We denote the
resulting category by $Rep{n}(g,n)$.
\end{definition}

\begin{theorem}\label{thm:semidirect}
The assignment $(\mathcal V,\mathcal S)\mapsto\big(g\ltimes\mathcal V,N^{\mathcal S}\big)$ is a functor
\[
\Psi\colon Rep{n}(g,n)\longrightarrow NijLie{n},
\]
and the underlying $Linf$-algebra of $\Psi(\mathcal V,\mathcal S)$ is the semidirect product of
Proposition~\ref{prop:rep-semidirect}. Its value on the arity-one component is the identity
\begin{equation}\label{eq:S0}
S_0\Big(\rho(n x)a+\rho(x)S_0a-S_0\big(\rho(x)a\big)\Big)-\rho(n x)S_0a\;=\;\partial\big(S_1(x)a\big)+S_1(x)\partial a ,
\end{equation}
which for $n=1$ is \eqref{eq:rep-nij} and for $n=2,3$ is \emph{(25)--(26)} of \cite[Def.~5.12]{das2},
resp.\ \emph{(S1)--(S3)} of \cite[Def.~5.8]{nij-lie3}.
\end{theorem}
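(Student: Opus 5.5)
On objects there is nothing new to construct. By Definition~\ref{def:rep-nij}, $N^{\mathcal S}$ is by hypothesis a Nijenhuis operator on $g\ltimes\mathcal V$. By Proposition~\ref{prop:rep-semidirect}, $g\ltimes\mathcal V$ is an $n$-term $L_\infty$-algebra. So $\Psi(\mathcal V,\mathcal S)$ is an object of $\mathbf{NijLie}_n$, and its underlying $L_\infty$-algebra is the semidirect product by construction. The real content of the statement is therefore twofold: functoriality, and the explicit form \eqref{eq:S0} of the arity-one condition.

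\textbf{Functoriality.} Send a homomorphism $(\Phi,\Theta)$ to the pair $(\varphi,\psi)$ that it induces on $g\ltimes\mathcal V\to g\ltimes\mathcal V'$. We take
\[
\varphi_1=\mathrm{id}_g\oplus\Phi_0,\qquad \varphi_q(\sigma x_1,\dots,\sigma x_{q-1},a)=\pm\Phi_{q-1}(x_1,\dots,x_{q-1})a,
\]
and define $\psi$ from $\Theta$ with the same sign convention $(-1)^q$ as in \eqref{eq:rep-nij-datum}. By definition of $\mathsf{Rep}_n(g,n)$, this pair is a morphism in the sense of Definition~\ref{def:nij-morphism}.

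\begin{itemize}
\item \emph{Identities.} $(\mathrm{Id}_{\mathcal V},0)$ goes to $(\mathrm{id},0)$. This is immediate.
\item \emph{Composition.} The composite in $\mathsf{Rep}_n(g,n)$ is $\Phi'\smile\Phi$ together with $\Phi'\smile\Theta+\Theta'\smile\Phi$. I would check that the correspondence ``cochain $\leftrightarrow$ components with one $\mathcal V$-slot'' carries the cup product to composition of the generated coalgebra maps. Since $\mathcal V$ is an abelian ideal (Proposition~\ref{prop:rep-semidirect}(ii)) and the $g$-part of each map is the identity, the only partitions contributing to $\boldsymbol{\varphi}'\boldsymbol{\varphi}$ on a tuple with one $\mathcal V$-argument are those whose $\mathcal V$-block feeds a $\Phi$-component and whose remaining singletons pass through $\mathrm{id}_g$. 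Summing these over unshuffles is exactly the cup product. The same bookkeeping turns \eqref{eq:comp-homotopy} into $\Phi'\smile\Theta+\Theta'\smile\Phi$.
\end{itemize}

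This step is routine once the morphism part of Proposition~\ref{prop:rep-semidirect} is used. It reuses the isomorphism $\mathsf{Rep}_n(g)\cong$ split abelian extensions stated there.

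\textbf{Formula \eqref{eq:S0}.} I read ``value on the arity-one component'' as the arity-two torsion identity \eqref{eq:torsion-form} with $m=2$, evaluated on $(\sigma x,a)$. This is the first identity that couples the $g$-part with $\mathcal V$. Here $\Lambda^N_2$ consists of two pieces:

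\begin{itemize}
\item the single-block term $d\,N^{\mathcal S}_2(\sigma x,a)=\partial(S_1(x)a)$, up to the sign $(-1)^2$;
\item the boundary term $-\sum N^{\mathcal S}_2(d\,\cdot,\cdot)$. Since $d(\sigma x)=0$, only $N^{\mathcal S}_2(\sigma x,\partial a)=S_1(x)\partial a$ survives.
\end{itemize}

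The torsion is computed from $l_2(\sigma x,a)=\rho(x)a$, using $N_1=n\oplus S_0$ and \eqref{eq:bracket-n}:
\[
\mathrm{Tors}(l_2)(\sigma x,a)=\rho(nx)S_0a-S_0\big(\rho(nx)a+\rho(x)S_0a-S_0\rho(x)a\big).
\]
Moving terms across yields \eqref{eq:S0}. The arity-one identity itself says that $n\oplus S_0$ is a chain map, i.e.\ $S_0\partial=\partial S_0$; this should be recorded alongside.

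\textbf{Main obstacle.} The step I expect to cost the most care is the sign bookkeeping: the shifted Koszul signs have to be matched against the $(-1)^q$ convention in \eqref{eq:rep-nij-datum} so that both right-hand terms come out with a plus sign. The specialisations then follow:
\begin{itemize}
\item for $n=1$, $\partial=0$ and $S_1=0$, which gives \eqref{eq:rep-nij};
\item for $n=2,3$, splitting \eqref{eq:S0} by the degree of $a$ gives the cited component lists.
\end{itemize}
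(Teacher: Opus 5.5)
Your proposal is correct and follows essentially the same route as the paper. Objects and morphisms of $\mathsf{Rep}_n(g,n)$ are by definition the data making $N^{\mathcal S}$ a Nijenhuis operator and $(\Phi,\Theta)$ a $\mathbf{NijLie}_n$-morphism, and \eqref{eq:S0} is read off from \eqref{eq:torsion-form} at $m=2$ on $(\sigma x,a)$: $\mathrm{Tors}_{N_1}(l_2)$ gives the left side, and the two pieces $dN_2$ and $N_2(d\,\cdot,\cdot)$ of $\Lambda^N_2$ give the right side. Your explicit check that cup-product composition corresponds to composition of the generated coalgebra maps goes beyond the paper, which simply declares functoriality to hold by construction. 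One small correction there: the vanishing you actually use is that of the induced components $\varphi_q$ ($q\ge 2$) on pure-$g$ tuples, which comes from their definition, not from the abelian-ideal property of the $l_k$.
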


\begin{proof}
Functoriality and well-definedness hold by construction: Definition~\ref{def:rep-nij} asks precisely that
$N^{\mathcal S}$ be a Nijenhuis operator, and Definition~\ref{def:nij-morphism} is the notion of morphism on the
target, so $\Psi$ is the identity on the underlying data. Identity \eqref{eq:S0} is obtained by evaluating
\eqref{eq:torsion-form} for $m=2$ on a pair $(\sigma x,a)$ with $x ing$ and $a\in\mathcal V$: by
Proposition~\ref{prop:rep-semidirect}(ii) all terms with two $\mathcal V$-arguments vanish, the torsion
$Tors_{N_1}([\,,\,])(\sigma x,a)$ contributes the left-hand side by \eqref{eq:bracket-n} and
\eqref{eq:rep-nij-datum}, and $\Lambda^N_2=dN_2+N_2(d\,\cdot\,,\cdot)$ contributes the right-hand side.
\end{proof}

\subsection{The equivalence}
\begin{definition}\label{def:extension}
Let $\big((\mathfrak{g}, \ell), N\big)$ be a Nijenhuis $L_\infty$-algebra. An \emph{extension} of $(\mathfrak{g}, n)$ is a strict morphism $\pi \colon \big((\mathfrak{g}, \ell), N\big) \to (\mathfrak{g}, n)$ in $\mathsf{NijLie}_n$ that is surjective in degree $0$. 

The extension is termed \emph{split} if it admits a strict section $\sigma \colon (\mathfrak{g}, n) \to \big((\mathfrak{g}, \ell), N\big)$ such that $\pi\sigma = \mathrm{id}$, and \emph{abelian} if its kernel $\mathcal{V} = \ker\pi$ is an abelian ideal—meaning every operation $l_k$ and component $N_q$ vanishes identically whenever two or more arguments belong to $\mathcal{V}$. 

The category $\mathsf{Ext}(\mathfrak{g},n)$ consists of split abelian extensions, with morphisms given by $\mathsf{NijLie}_n$ morphisms intertwining $\pi$ and $\sigma$.
\end{definition}

\begin{theorem}\label{thm:equivalence}
The functor $\Psi$ of Theorem~\ref{thm:semidirect} induces an equivalence of categories
\[
\Psi\colon Rep{n}(g,n)\;\xrightarrow{\ \simeq\ }\;Ext(g,n).
\]
For $n=1$ this is the classical semidirect product correspondence, for $n=2$ it is
\cite[Prop.~5.14]{das2} and for $n=3$ it is \cite[Thm.~5.13]{nij-lie3}; the squares
\[
\begin{array}{ccc}
Rep{m}(g,n) & \xrightarrow{\ \Psi\ } & NijLie{m}\\[2pt]
\downarrow & & \downarrow\\[2pt]
Rep{n}(g,n) & \xrightarrow{\ \Psi\ } & NijLie{n}
\end{array}
\]
commute for $m\le n$, the vertical arrows being the full embeddings of Theorem~\ref{thm:truncation} and the
adjunction of zero terms.
\end{theorem}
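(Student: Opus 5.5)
The plan is to show that $\Psi$ lands in $\mathsf{Ext}(g,n)$, and then to prove it is fully faithful and essentially surjective. Both steps are reduced to Proposition~\ref{prop:rep-semidirect}, which already identifies $\mathsf{Rep}_n(g)$ with split abelian $L_\infty$-extensions, so only the Nijenhuis data and the homotopy part of morphisms still need attention.

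\emph{Landing in $\mathsf{Ext}(g,n)$.} For $(\mathcal V,\mathcal S)$, the projection $\pi$ and the inclusion $\sigma$ are strict $L_\infty$-morphisms by Proposition~\ref{prop:rep-semidirect}(i).

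\begin{itemize}
\item Compatibility with the Nijenhuis data: by \eqref{eq:rep-nij-datum}, $N^{\mathcal S}_1=n\oplus S_0$ and all $N^{\mathcal S}_q$, $q\ge2$, take values in $\mathcal V$. Hence $\pi\circ N^{\mathcal S}_1=n\circ\pi$, $\pi N^{\mathcal S}_q=0$ for $q\ge2$, and $N^{\mathcal S}\sigma=\sigma n$ (the higher $N_q$ vanish on $\sigma g$). So \eqref{eq:H} holds with $\psi=0$.
\item Compatibility with the twisted structures: from \eqref{eq:twisted}, $\pi\circ\ell^{N}=[\,,\,]_n\circ\pi$, because every correction term of $\overline{\mathcal A}_m$ and $\overline{\mathcal B}_m$ contains some $N_q$ with $q\ge2$ and is therefore $\mathcal V$-valued. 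The same terms vanish on $\sigma g$, since all $N_q$ ($q\ge2$) vanish there.
\item Abelianness: the $N_q$ have at most one $\mathcal V$-argument by construction, and the $l_k$ do by Proposition~\ref{prop:rep-semidirect}(ii).
\end{itemize}

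\emph{Essential surjectivity.} Take a split abelian extension $\big((\mathfrak h,\ell),N\big)\xrightarrow{\pi}(g,n)$ with section $\sigma$. Identify $\mathfrak h\cong g\oplus\mathcal V$ via $\sigma$ and $\ker\pi$. Proposition~\ref{prop:rep-semidirect} then produces a unique $\mathcal R$ with $\ell=\ell_{g\ltimes\mathcal V}$.

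For the Nijenhuis data, strictness of $\pi$ and $\sigma$ as Nijenhuis morphisms (with $\psi=0$) gives four facts:
\begin{itemize}
\item $N_1$ is block-triangular with diagonal $(n,S_0)$; the off-diagonal block $g\to\mathcal V$ vanishes because $N_1\sigma=\sigma n$;
\item $\pi N_q=0$ for $q\ge2$;
\item $N_q$ vanishes on $\sigma g^{\otimes q}$;
\item $N_q$ vanishes whenever two or more arguments lie in $\mathcal V$, by abelianness.
\end{itemize}
Defining $S_{q-1}$ by reading off $N_q(\sigma x_1,\dots,\sigma x_{q-1},a)$ with the sign $(-1)^q$ exactly recovers \eqref{eq:rep-nij-datum}. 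Since $N$ is Nijenhuis by hypothesis, $(\mathcal V,\mathcal S)\in\mathsf{Rep}_n(g,n)$, and $\Psi(\mathcal V,\mathcal S)$ is isomorphic to the given extension through the identification. This isomorphism is an actual isomorphism, not merely an equivalence.

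\emph{Full faithfulness.} A morphism in $\mathsf{Ext}(g,n)$ is a pair $(\varphi,\psi)$ intertwining $\pi$ and $\sigma$. From $\pi'\varphi=\pi$ and $\varphi\sigma=\sigma'$, one expects:
\begin{itemize}
\item $\varphi_1$ restricts to the identity on $g$ and sends $\mathcal V$ to $\mathcal V'$;
\item $\varphi_q$ ($q\ge2$) is $\mathcal V'$-valued and vanishes on $\sigma g^{\otimes q}$;
\item the same holds for $\psi$.
\end{itemize}
This is the step I expect to be the main obstacle. The first concern is the precise meaning of ``intertwining'' for the homotopy $\psi$: I would interpret it as $\pi'\Psi=0$ and $\Psi\boldsymbol\sigma=0$. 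The second is whether morphisms may have nonzero components with two $\mathcal V$-arguments. I would rule these out by requiring morphisms to respect the abelian ideal, or else quotient them out, which would give an equivalence rather than an isomorphism.

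With this settled, $\varphi$ is encoded by $\Phi_{q-1}(x_1,\dots,x_{q-1})a=\pm\varphi_q(\sigma x_\bullet,a)$ and $\psi$ by $\Theta$ in the same way. The morphism part of Proposition~\ref{prop:rep-semidirect} turns (M1) into \eqref{eq:rep-morphism}. Condition \eqref{eq:H} is literally the defining condition on $(\Phi,\Theta)$ in Definition~\ref{def:rep-nij}, so the hom-sets biject. Composition \eqref{eq:comp-homotopy} corresponds to the cup product formula, and this compatibility is checked arity by arity.

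\emph{Commuting squares.} Both legs of the square send $(\mathcal V,\mathcal S)$ to the same semidirect product with components extended by zero. Commutativity is then immediate from Theorem~\ref{thm:truncation} and Lemma~\ref{lem:bounds}.
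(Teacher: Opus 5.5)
Your decomposition is the paper's own. $\Psi$ lands in $\mathsf{Ext}$ via $\pi N^{\mathcal S}=n\pi$ and $N^{\mathcal S}\sigma=\sigma n$. Morphisms are encoded by reading off $\varphi_q,\psi_q$ on tuples $(\sigma x_1,\dots,\sigma x_{q-1},a)$. Essential surjectivity reads Proposition~\ref{prop:rep-semidirect}(iii) and \eqref{eq:rep-nij-datum} backwards, and the squares come from Theorem~\ref{thm:truncation}. In the landing step you are more careful than the paper. Condition (i) of Definition~\ref{def:nij-morphism} also requires $\pi$ and $\sigma$ to intertwine $\ell^N$ with $[\,,\,]_n$, which the paper never checks. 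Your argument for it is correct: every term of $\overline{\mathcal A}_m$ and $\overline{\mathcal B}_m$ contains some $N_q$ with $q\ge2$, so it is $\mathcal V$-valued and vanishes on $\sigma\mathfrak g$, and the $n$-twist part follows from $\pi N_1=n\pi$ and $N_1\sigma=\sigma n$.

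The gap is exactly where you locate it, and it cannot be closed from the definitions as stated. The paper's fullness step rests on the sentence ``abelianness guarantees that no other values occur, since a component with two arguments in $\mathcal V$ vanishes''. However, Definition~\ref{def:extension} imposes abelianness only on the $l_k$ and $N_q$ of objects, not on the components $\varphi_q,\psi_q$ of morphisms. Here is a counterexample with $n=2$:
\begin{itemize}
\item Take $\mathfrak g=0$ and $\mathcal V=(V_1\xrightarrow{0}V_0)$ with $\dim V_0\ge2$ and $V_1\neq0$, together with $\mathcal R=0$ and $\mathcal S=S_0=\mathrm{id}$.
\item Then $\Psi(\mathcal V,\mathcal S)=\mathcal V$, all operations and all twisted operations are zero, and $\mathbf N=\mathrm{id}$.
\item For any nonzero skew map $\varphi_2\colon V_0\times V_0\to V_1$, the pair $\big((\mathrm{id},\varphi_2),0\big)$ satisfies (M1) and \eqref{eq:H} trivially. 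It is therefore an endomorphism in $\mathsf{Ext}(0,0)$.
\item But $C^j(0,\cdot)=0$ for $j\ge1$, so every morphism in the image of $\Psi$ has $\varphi_2=0$.
\end{itemize}
Nonzero $\mathfrak g$ acting trivially works the same way. So $\Psi$ is faithful and essentially surjective but not full, for every $n\ge2$; for $n=1$, Lemma~\ref{lem:bounds} removes the problem.

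The theorem becomes true once morphisms of $\mathsf{Ext}(\mathfrak g,n)$ are restricted as you propose. Two of the conditions, $\boldsymbol\pi'\Psi=0$ and $\Psi\boldsymbol\sigma=0$, are in fact forced, since the composites with $(\pi',0)$ and $(\sigma,0)$ have homotopy parts $\boldsymbol\pi'\Psi$ and $\Psi\boldsymbol\sigma$. The genuinely new condition is that all $\varphi_q$ and $\psi_q$ vanish on tuples with two or more arguments in $\mathcal V$. This class is closed under composition: a block with exactly one $\mathcal V$-argument is $\mathcal V'$-valued, because $\pi'\varphi=\pi$ and $\boldsymbol\pi'\Psi=0$. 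Hence the outer component of a composite always sees either two $\mathcal V'$-arguments or a block with two $\mathcal V$-arguments. With this restriction your hom-set bijection is correct, because Definition~\ref{def:rep-nij} defines morphisms of $\mathsf{Rep}_n(\mathfrak g,n)$ as exactly those $(\Phi,\Theta)$ whose induced pair satisfies (M1) and \eqref{eq:H}. Drop the ``quotient them out'' alternative. No congruence is specified, and it is not clear that it would be compatible with composition or that it would reproduce the hom-sets of $\mathsf{Rep}_n(\mathfrak g,n)$.
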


\begin{proof}
\emph{$\Psi$ lands in $Ext(g,n)$.} By Proposition~\ref{prop:rep-semidirect} the underlying $Linf$-algebra is
a split abelian $Linf$-extension, and $\pi$, $\sigma$ are strict; that they are morphisms in $NijLie{n}$ is the
compatibility $\pi N^{\mathcal S}=n\pi$ and $N^{\mathcal S}\sigma=\sigma n$, immediate from
\eqref{eq:rep-nij-datum}.

\emph{Faithfulness.} If $\Psi(\Phi,\Theta)=\Psi(\Phi',\Theta')$ then the two morphisms agree on all tuples with
exactly one argument in $\mathcal V$, and by \eqref{eq:rep-nij-datum} and its analogue for morphisms this
recovers every component $\Phi_j$ and $\Theta_j$; hence $(\Phi,\Theta)=(\Phi',\Theta')$.

\emph{Fullness.} Let $(\varphi,\psi)\colon\Psi(\mathcal V,\mathcal S)\to\Psi(\mathcal V',\mathcal S')$ be a
morphism in $Ext(g,n)$, so that $\varphi_1$ commutes with $\pi$ and $\sigma$. Then
$\varphi_1(\sigma x+a)=\sigma'x+\Phi_0(a)$ for a unique chain map $\Phi_0$, and evaluating the higher components
$\varphi_q$ and $\psi_q$ on tuples $(\sigma x_1,\dots,\sigma x_{q-1},a)$ defines $\Phi_{q-1}$ and
$\Theta_{q-1}$; abelianness guarantees that no other values occur, since a component with two arguments in
$\mathcal V$ vanishes and a component with none is determined by $\pi$ and $\sigma$. Substituting into (M1) and
\eqref{eq:H} gives \eqref{eq:rep-morphism} and its Nijenhuis analogue, so $(\varphi,\psi)=\Psi(\Phi,\Theta)$.

\emph{Essential surjectivity.} Let $\big((g,\ell),N,\pi,\sigma\big)$ be a split abelian extension with kernel
complex $\mathcal V$. Since $\sigma$ is a strict splitting, $g_0=\sigma(g)\oplus V_0$ as vector spaces and
$g_i=V_i$ for $i\ge1$. Define $\mathcal R$ and $\mathcal S$ by reading (iii) of
Proposition~\ref{prop:rep-semidirect} and \eqref{eq:rep-nij-datum} backwards, i.e.
$$
R_{k-1}(x_1,\dots,x_{k-1})a=(-1)^kl_k\big(\sigma x_1,\dots,\sigma x_{k-1},a\big),
$$
$$
S_{q-1}(x_1,\dots,x_{q-1})a=(-1)^qN_q\big(\sigma x_1,\dots,\sigma x_{q-1},a\big),
$$
together with $R_0=d|_{\mathcal V}$ and $S_0=N_1|_{\mathcal V}$. These are well defined because $\pi$ is
strict, so $\mathcal V$ is preserved by all operations having an argument in $\mathcal V$, and they exhaust the
structure because the extension is abelian. By Proposition~\ref{prop:rep-semidirect} the collection
$\mathcal R$ satisfies \eqref{eq:rep}, and by construction $N^{\mathcal S}=N$ under the identification
$g\cong g\oplus\mathcal V$ induced by $\sigma$; hence $(\mathcal V,\mathcal S)$ is an object of
$Rep{n}(g,n)$ and $(x,a)\mapsto\sigma(x)+a$ is an isomorphism
$\Psi(\mathcal V,\mathcal S)\xrightarrow{\cong}\big((g,\ell),N\big)$ in $Ext(g,n)$.

The commutativity of the squares follows from Theorem~\ref{thm:truncation} together with the observation that
adjoining a zero term to $\mathcal V$ adjoins a zero term to $g\ltimes\mathcal V$.
\end{proof}

\begin{corollary}\label{cor:classification}
Split abelian extensions of a Nijenhuis Lie algebra $(g,n)$ inside $NijLie{n}$ are classified, up to
isomorphism, by pairs $(\mathcal R,\mathcal S)$ as in Definitions~\ref{def:rep} and \ref{def:rep-nij} modulo the
equivalence generated by \eqref{eq:rep-morphism}; and the classification is stable in $n$, in the sense that the
level-$m$ classification is the sub-classification of the level-$n$ one on the data supported in the bottom $m$
degrees.
\end{corollary}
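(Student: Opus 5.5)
The corollary should follow formally from Theorem~\ref{thm:equivalence} and Theorem~\ref{thm:truncation}. An equivalence of categories induces a bijection between isomorphism classes of objects. So I would first identify the isomorphism classes of $\mathsf{Ext}(g,n)$ with the isomorphism classes of $\mathsf{Rep}_n(g,n)$. The real content is then to describe when two objects $(\mathcal V,\mathcal S)$ and $(\mathcal V',\mathcal S')$ of $\mathsf{Rep}_n(g,n)$ are isomorphic. I would begin by recalling that every object of $\mathsf{Ext}(g,n)$ is isomorphic to one of the form $\Psi(\mathcal V,\mathcal S)$; this is the essential surjectivity step, where $\mathcal R$ and $\mathcal S$ are read off from $l_k$ and $N_q$ on tuples $(\sigma x_1,\dots,\sigma x_{k-1},a)$. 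Full faithfulness then shows that isomorphisms in $\mathsf{Ext}(g,n)$ between such objects are exactly the images $\Psi(\Phi,\Theta)$ of isomorphisms in $\mathsf{Rep}_n(g,n)$.

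The second step is to characterise isomorphisms in $\mathsf{Rep}_n(g,n)$. A homomorphism $(\Phi,\Theta)$ is invertible exactly when $\Phi_0$ is invertible. To see this, I would use the filtration by arity $j$ in $C^\bullet(g,\Hom(\mathcal V,\mathcal V'))$: with $\Phi_0$ invertible, the cup-product inverse $\Phi^{-1}$ can be constructed recursively, as in the filtration argument of Theorem~\ref{thm:deformation}(iii). The homotopy part is then inverted by $-\Phi^{-1}\smile\Theta\smile\Phi^{-1}$, following the composition law \eqref{eq:comp-homotopy}. Condition \eqref{eq:rep-morphism} can be rewritten as $\mathcal R'=\Phi\smile\mathcal R\smile\Phi^{-1}-\delta\Phi\smile\Phi^{-1}$, i.e.\ a gauge action on $\mathcal R$. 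Relation \eqref{eq:H}, corestricted on $\sigma$-tuples with one $\mathcal V$ argument, gives the corresponding transformation rule for $\mathcal S$, modified by a $\Theta$-homotopy. "The equivalence generated by \eqref{eq:rep-morphism}" is to be read as this combined relation on pairs $(\mathcal R,\mathcal S)$. It is an equivalence relation because isomorphisms compose (Proposition~\ref{prop:category}) and invert as above.

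For stability in $n$, I would use the commuting squares in Theorem~\ref{thm:equivalence} together with Theorem~\ref{thm:truncation}. A level-$m$ pair $(\mathcal R,\mathcal S)$ extended by zero is a level-$n$ pair supported in degrees $<m$. Conversely, a level-$n$ pair supported in degrees $<m$ has $R_j,S_j$ vanishing for $j$ beyond the level-$m$ range, by Lemma~\ref{lem:bounds}. Because $\mathbf{NijLie}_m\cong\mathbf{NijLie}_n^{<m}$ is a full embedding, isomorphisms between supported objects are the same at both levels. Hence the two equivalence relations coincide on such data, and the level-$m$ classification injects into the level-$n$ one as claimed.

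I expect the main obstacle to be the invertibility of morphisms with invertible $\Phi_0$, specifically controlling the homotopy component $\Theta$ under inversion so that \eqref{eq:H} holds for the inverse. My first attempt would be to verify \eqref{eq:H} for $(\Phi^{-1},-\Phi^{-1}\smile\Theta\smile\Phi^{-1})$ directly, by conjugating the relation for $(\Phi,\Theta)$ by $\Phi^{-1}$. If that verification breaks down, the fallback is to state the equivalence as the relation generated by homomorphisms with invertible $\Phi_0$, symmetrised. The classification up to isomorphism is unaffected by this choice, since isomorphisms in $\mathsf{Ext}(g,n)$ correspond to isomorphisms in $\mathsf{Rep}_n(g,n)$ under the equivalence anyway.
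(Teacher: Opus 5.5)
Your proposal is correct and follows the paper's route. The paper's proof is just ``combine Theorem~\ref{thm:equivalence} with Theorem~\ref{thm:truncation}'': the equivalence gives the bijection on isomorphism classes, and the full embeddings together with the commuting squares give stability in $n$. Your extra work is detail the paper leaves implicit: the invertibility argument (conjugating \eqref{eq:H} by $\boldsymbol{\varphi}^{-1}$ does verify it for your proposed inverse), and your reading of ``the equivalence generated by \eqref{eq:rep-morphism}'' as also including the $\Theta$-modified transformation rule for $\mathcal S$, which is in fact the reading under which the statement is correct.
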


\begin{proof}
Combine Theorem~\ref{thm:equivalence} with Theorem~\ref{thm:truncation}.
\end{proof}

\section{Concluding remarks}
\label{sec:concl}

We have defined Nijenhuis operators on $L_\infty$-algebras, in particular on semistrict Lie $n$-algebras for every
$n$, by a single principle --- the arity-$m$ Nijenhuis torsion vanishes up to the correction assembled from all
substitutions of the higher components --- and we have shown that this principle is equivalent to the statement
that the operator is an $L_\infty$-morphism from the twisted structure to the original one. The levels assemble into
a tower whose bottom three floors are the known theories. Several questions remain.

\begin{enumerate}
\item Is the twisted collection always an $L_\infty$-structure? Theorem~\ref{thm:deformation} proves this
when the linear part $n$ is injective, and Proposition~\ref{prop:strict-twist} proves it for strict objects at
every level with no hypothesis. In general the $L_\infty$-relations of $\ell^N$ hold only modulo $\ker\mathbf N$.
We expect the answer to be affirmative, and a proof would presumably proceed by exhibiting $\ell^N$ as a
derived bracket in the sense of Voronov \cite{voronov}.

\item \emph{Cohomology and deformations.} The identities \eqref{eq:torsion-form} should be the vanishing
conditions for a Maurer--Cartan element in a graded Lie algebra built from the Nijenhuis--Richardson bracket on
$\bigoplus_k\Hom(S^kV,V)$ together with the extra generator recording $N$; identifying that algebra would give a
deformation cohomology of Nijenhuis $L_\infty$-algebras extending \cite{das1,swzz}, in the spirit of the survey \cite{sheng-survey} and of the Maurer--Cartan and cohomological treatments of related operators and derivation-type structures \cite{imed-maurer,imed-rb,basdouri-crossed}, and would
explain the polynomial degree bound $\lfloor m/2\rfloor$ of Remark~\ref{rem:reading}(2) conceptually.

\item \emph{Categorification.} Baez and Crans realise $2$-term $L_\infty$-algebras as Lie algebras internal to
$2$-vector spaces \cite{baez-crans}. The level-$n$ analogue would realise $n$-term $L_\infty$-algebras as Lie
algebras internal to a category of $n$-vector spaces, and a Nijenhuis operator as a linear functor together with
coherence data of $n-1$ levels, generalising the natural isomorphism of \cite[Def.~3.1]{das2} and the
second-order coherence of \cite[\S6]{nij-lie3}. Proving that the resulting category is equivalent to
$NijLie{n}$ would be the level-$n$ analogue of \cite[Thm.~4.5]{das2}.

\item \emph{Other operators.} The scheme of this paper --- torsion corrected by the substitutions of the higher
components, equivalently the morphism property with respect to an explicitly twisted structure --- applies
verbatim to Rota--Baxter operators, twisted Rota--Baxter operators and Reynolds operators, and should produce
the level-$n$ analogues of \cite{zl,das3}; see also \cite{imed-rb} for modified Rota--Baxter operators with derivations. The only input which changes is the definition of the
twist \eqref{eq:twist} and hence of the correction \eqref{eqN}.

\item \emph{Geometry.} Nijenhuis forms on $L_\infty$-algebras and their Poisson-geometric significance were studied
in \cite{azimi}. It would be interesting to compare the notion of the present paper with the Nijenhuis forms of
that work; the comparison is not immediate because \cite{azimi} works with a fixed arity while our operator has
components in every arity.

\item \emph{Other algebraic structures.} The classification and derivation-theoretic techniques developed for
associative trialgebras and Hom-trialgebras \cite{mosbahi-hom,mosbahi-trial} suggest studying Nijenhuis-type
operators, and their torsion, on such structures as well.

\item \emph{Higher crossed modules.} Proposition~\ref{prop:strict-linf} identifies strict Lie $n$-algebras with
dg Lie algebras concentrated in $n$ degrees, and Theorem~\ref{thm:strict} identifies the strict Nijenhuis
objects. Relating $\mathsf{SNijLie}_n$ to Nijenhuis structures on $(n-1)$-crossed modules of Lie algebras
requires resolving the symmetry convention for the higher Peiffer liftings.
\end{enumerate}

\appendix

\section{Component identities at low levels}
\label{app:components}

We record the expansion of \eqref{eq:torsion-form} in components for $n=1,2,3$. Throughout, $x,y,z,t\in g_0$,
$h,k\in g_1$, $e\in g_2$, and $\sum_{(3)}$ denotes the alternating sum over the three ways of singling out one
of three arguments. All identities below are stated in the shifted convention of \S\ref{sec:prelim-shift} and
were produced and cross-checked by the computation of Appendix~\ref{app:verify}.

\subsection*{Level one}
The only non-vacuous identity is the arity-two one,
\[
Tors_{n}([\,,\,])(x,y)=[n x,n y]-n\big([n x,y]+[x,n y]-n[x,y]\big)=0 .
\]

\subsection*{Level two}
Here $N=(n_0,n_1,N_2)$ with $N_2\colon g_0\times g_0\to g_1$.
\begin{align}
&m=1: && n\,d=d\,n, \tag{A.1}\\
&m=2: && Tors_n([\,,\,])(x,y)+d\,N_2(x,y)=0, \tag{A.2}\\
& && Tors_n([\,,\,])(x,h)+N_2(x,d h)=0, \tag{A.3}\\
&m=3: && Tors_n(l_3)(x,y,z)+\Lambda(x,y,z)=0, \tag{A.4}
\end{align}
where
\begin{equation}\label{eq:Lambda3}
\Lambda(a,b,c):=\sum_{(3)}\Big([n a,N_2(b,c)]-n\big[a,N_2(b,c)\big]+N_2\big(a,[b,c]_n\big)-n\,N_2\big(a,[b,c]\big)\Big).
\end{equation}
Identities (A.2)--(A.4) are (5)--(7) of \cite[Def.~4.1]{das2}. The arity-four identity is vacuous.

\subsection*{Level three}
Here $N=(n_0,n_1,n_2,N_2,N_3)$ with $N_2$ on $g_0\times g_0$ and $g_0\times g_1$ and
$N_3\colon g_0^{\times3}\to g_2$.
\begin{align}
&m=1: && n\,d=d\,n \ \text{ on } g_1 \text{ and } g_2, \tag{A.5}\\
&m=2: && Tors_n([\,,\,])(x,y)+d\,N_2(x,y)=0, \tag{A.6}\\
& && Tors_n([\,,\,])(x,h)+N_2(x,d h)+d\,N_2(x,h)=0, \tag{A.7}\\
& && Tors_n([\,,\,])(x,e)+N_2(x,d e)=0, \tag{A.8}\\
& && Tors_n([\,,\,])(h,k)+N_2(d h,k)-N_2(h,d k)=0, \tag{A.9}\\
&m=3: && Tors_n(l_3)(x,y,z)+\Lambda(x,y,z)+d\,N_3(x,y,z)=0, \tag{A.10}\\
& && Tors_n(l_3)(x,y,h)+\Lambda(x,y,h)-N_3(x,y,d h)=0, \tag{A.11}\\
&m=4: && Tors_n(l_4)(x,y,z,t)+\Xi(x,y,z,t)=0, \tag{A.12}
\end{align}
with $\Lambda$ as in \eqref{eq:Lambda3} and, writing $(x_1,x_2,x_3,x_4)=(x,y,z,t)$, a hat for omission,
$\sum_{(4)}$ and $\sum_{(4,2)}$ for the alternating sums over the four ways of singling out one argument,
respectively the six ways of singling out a pair $i<j$, and $\varepsilon_{ijkl}$ for the sign of the
permutation,
\begin{align}
\Xi=\ &\sum_{(4)}\Big(\big[n x_i,N_3(x_{\hat\imath})\big]-n\big[x_i,N_3(x_{\hat\imath})\big]\Big)
+\sum_{(4)}\Big(N_2\big(x_i,l^N_3(x_{\hat\imath})\big)-n\,N_2\big(x_i,l_3(x_{\hat\imath})\big)\Big) \notag\\
&+\sum_{(4,2)}\Big(l_3\big(n x_k,n x_l,N_2(x_i,x_j)\big)-n\,l_3\big(x_k,x_l,N_2(x_i,x_j)\big)\Big) \notag\\
&+\sum_{(4,2)}\Big(N_3\big(x_k,x_l,[x_i,x_j]_n\big)-n\,N_3\big(x_k,x_l,[x_i,x_j]\big)\Big)
+\!\!\sum_{\{i,j\}\sqcup\{k,l\}}\!\!\varepsilon_{ijkl}\big[N_2(x_i,x_j),N_2(x_k,x_l)\big], \label{eq:Xi}
\end{align}
where in the third and fourth sums $\{k<l\}$ is the complement of $\{i<j\}$ and $l^N_3$ is as in
\eqref{eq:twisted-low}. The arity-five identity is vacuous. Identities (A.6)--(A.9), (A.11), (A.12) are (N1)--(N4),
(N6), (N7) of \cite[Def.~3.6]{nij-lie3}; identity (A.10) corrects \cite[(N5)]{nij-lie3} by the term $dN_3$, and
the third sum of \eqref{eq:Xi} corrects \cite[(3.22)]{nij-lie3}, as explained in Remark~\ref{rem:corrections}.

\subsection{Level Four}
At level four, the framework incorporates the higher component $N_4 \colon \mathfrak{g}_0^{\times 4} \to \mathfrak{g}_3$, the multilinear operation $l_5$, and the arity-five identity. The associated correction term $\Lambda_5^N$ features cubic contributions in the higher components, arising from set partitions of $\{1,\dots,5\}$ of type $(2,2,1)$, explicitly given by:
\begin{equation*}
l_3\big(n x_i, N_2(x_j,x_k), N_2(x_l,x_m)\big) - n \, l_3\big(x_i, N_2(x_j,x_k), N_2(x_l,x_m)\big),
\end{equation*}
together with types $(2,1,1,1)$ and $(3,2)$, alongside the quadratic terms\\ $l_2(N_2, N_3)$ and $l_4(n\cdot, n\cdot, n\cdot, N_2)$. The complete explicit system is generated via the algorithm detailed below.

\section{Machine Verification}
\label{app:verify}

The consistency of the component identities with \cite{das2,nij-lie3} was verified using a computer algebra routine implementing the shifted conventions of Section~\ref{sec:prelim-shift}:

\begin{enumerate}
\item \textbf{Term Structure:} Terms are formalized as directed trees whose leaves carry $V$-degrees and whose internal nodes represent operations $l_k$ (degree $-1$) and components $N_q$ (degree $0$). Arguments are canonically ordered via Koszul signs; a term vanishes if identical odd-degree arguments coincide or if its degree falls outside the truncation window $1 \le \deg \le n$.
\item \textbf{Evaluation of Sides:} For each arity $m$, the routine constructs $\mathcal{A}_m$ by enumerating set partitions of $\{1,\dots,m\}$ with appropriate Koszul signs, and $\mathcal{B}_m$ via $(k,m-k)$-unshuffles expanding $l^N_k$ through \eqref{eq:twisted}.
\item \textbf{Output Generation:} Computes the normal forms of $\mathcal{A}_m - \mathcal{B}_m$, the torsion $\mathrm{Tors}_n(l_m)$, and the correction $\Lambda_m^N$.
\end{enumerate}

\noindent\textbf{Computational Outcomes by Truncation Level:}
\begin{itemize}
\item \textbf{$n=1$:} Yields $\Lambda_m^N = 0$ for all $m$ alongside the classical torsion condition $\mathrm{Tors}_n(l_2) = 0$.
\item \textbf{$n=2$:} Produces identities (A.1)--(A.4), with $\Lambda_3^N$ coinciding with \eqref{eq:Lambda3} and confirming the triviality of the arity-four relation.
\item \textbf{$n=3$:} Generates (A.5)--(A.12), confirming that the arity-three identity on $\mathfrak{g}_0^{\times 3}$ incorporates $d N_3(x,y,z)$ and that the arity-four identity retains specific $n$-twists (Remark~\ref{rem:corrections}).
\item \textbf{$n=4$:} Delivers the arity-five identity consisting of $482$ terms in normal form (of which $450$ constitute $\Lambda_5^N$), while independently recovering the underlying $L_\infty$-relations \cite{das2,nij-lie3}.
\end{itemize}

\section*{Acknowledgements}

The authors are grateful to the referees for their valuable comments.


\end{document}